\def\BState{\State\hskip-\ALG@thistlm}
\newtcolorbox{highlighted}{colback=yellow,coltext=red,breakable}
\newtheorem{prop}{Proposition}
\newtheorem{ass}{Assumption}
\newtheorem{theorem}{Theorem}
\algnewcommand\And{\textbf{and }}
\algnewcommand\Or{\textbf{or }}
\newcommand{\fg}[1]{\textcolor{ForestGreen}{#1}}
\journal{Transportation Research Part A: Policy and Practice}
\begin{document}
	\onehalfspacing
	\begin{frontmatter}

		\title{Planning of integrated mobility-on-demand and urban transit networks}

		\author[pk]{Pramesh Kumar*}
		\author[pk]{Alireza Khani}
		\address[pk]{Department of Civil, Environmental and Geo-Engineering, University of Minnesota, Twin Cities, MN}
		
		\begin{abstract}
             We envision a multimodal transportation system where Mobility-on-Demand (MoD) service is used to serve the first mile and last mile of transit trips. For this purpose, the current research formulates an optimization model for designing an integrated MoD and urban transit system. The proposed model is a mixed-integer non-linear programming model that captures the strategic behavior of passengers in a multimodal network through a passenger assignment model. It determines which transit routes to operate, the frequency of the operating routes, the fleet size of vehicles required in each transportation analysis zone to serve the demand, and the passenger flow on both road and transit networks. A Benders decomposition approach with several enhancements is proposed to solve the given optimization program. Computational experiments are presented for the Sioux Falls multimodal network. \fg{The results show a significant improvement in the congestion in the city center with the introduction and optimization of an integrated transportation system. The proposed design allocates more vehicles to the outskirt zones in the network (to serve the first mile and last mile of transit trips) and more frequency to the transit routes in the city center. The integrated system significantly improves the share of transit passengers and their level of service in comparison to the base optimized transit system. The sensitivity analysis of the bus and vehicle fleet shows that increasing the number of buses has more impact on improving the level of service of passengers compared to increasing the number of MoD vehicles. Finally, we provide managerial insights for deploying such multimodal service.}
		\end{abstract}
		
		\begin{keyword}
			Transit network design problem (TNDP), mobility-on-demand (MoD), first mile last mile (FMLM), multimodal passenger assignment, Benders decomposition			
		\end{keyword}
				
		\cortext[mycorrespondingauthor]{Corresponding author}
		\fntext[myfootnotetel]{Email: kumar372@umn.edu}
		\fntext[myfootnotetel]{Tel: (612) 461-1643}
		\fntext[myfootnotetel]{Web: \href{http://umntransit.weebly.com/}{http://umntransit.weebly.com/}}

	\end{frontmatter}
	
	\newpage
	\section{Introduction}\label{sec:intro}
    The introduction of Mobility-on-Demand (MoD) services such as Uber, Lyft, and others as transportation alternatives has created many opportunities as well as challenges. On one hand, they provide a seamless mobility service with just a few taps on a cellphone application. On the other hand, \fg{they have} increased congestion in densely populated areas due to an increase in the relocation and pickup trips made by the participating drivers in the network  (\citet{Laris2019}). Furthermore, the transportation agencies envision the introduction of Autonomous Vehicles as a shared mobility service in the near future (\citet{Motavalli2020}), which would lead to severe congestion in densely populated areas as predicted by various simulation studies (\citet{Levin2015, Fagnant2016c, Levin2017}). \\
 
 	Public transportation, which can carry multiple passengers, is widely considered as a practical solution to the congestion problem by reducing vehicle-miles traveled (VMT) on roads (\citet{Aftabuzzaman2015}). However, due to its fixed routes and schedules, limited network coverage, and \fg{long} waiting time, sometimes, it is less attractive to travelers in comparison to the auto mode. The limited network coverage makes it difficult or sometimes impossible to access transit service in some areas. This inaccessibility problem is also known as the \textit{first mile/last mile (FMLM) problem for transit}. The problem is commonly faced by travelers commuting from low-density areas where transit service is not available or less frequent because of the economic in-viability of providing such service. \\

 	\fg{A few studies have argued that the MoD service provided using autonomous vehicles would become a competitor of public transit mode (\citet{Chen2016a, Levin2015, Mo2020a}), reducing its ridership, and other studies have even raised the question of whether urban mobility is possible without it (\citet{OECD2015, Mendes2017})}. On the contrary, \citet{8569381} showed that the integration of the MoD system with transit system could help in achieving better results, such as a significant reduction in travel time, emissions, and costs as compared to the standalone MoD system. Through the current research, we also envisage an integrated MoD and transit system that aims to achieve the following potential benefits:
    
    \begin{enumerate}
    	\item Providing fast and reliable mobility in low-density areas (i.e., by providing a first mile/last mile service) by means of characteristics of MoD service such as demand responsiveness, fleet repositioning, and reachability. 

	\item Allocation of resources from less congested areas to providing high-frequency transit service in congested areas through such integration. 

	\item Using existing transit infrastructure to reduce the number of vehicles needed for serving trips.

	\item Reducing congestion and carbon emissions in the network, improving the mobility of travelers, and reducing the overall cost of providing transit service. 
    	    	
    \end{enumerate}
       
    To achieve the above-mentioned benefits, we focus on the strategic planning of the transportation network that allows for intermodal trips with the first or last leg of the trips being served by the MoD service. To be specific, we try to answer questions such as which transit routes to operate when MoD vehicles are deployed to serve the FMLM connection, what should be the size of the vehicle fleet to be deployed, and what should be the frequency of operating transit routes. We attempt to answer these questions and make the following contributions through this article:
    
	\begin{enumerate}
	    \item Propose a passenger assignment model that predicts the travel behavior of passengers in a multimodal network. This step extends the idea of a hyperpath transit assignment model proposed by \cite{Spiess1989} to a multimodal transportation system with on-demand services. 
		\item Develop an optimization model to decide which transit routes to operate, frequency of operating transit routes, and MoD fleet size required to serve the FMLM of trips. 
		\item Develop a fast Benders decomposition implementation that uses efficient cutting planes to solve the large instances of the current problem.
		\item Conduct numerical experiments to show the efficacy of the proposed model and solution methods and discuss the steps to implement such service in practice. 
	\end{enumerate}

	\section{Related work}\label{sec:rw}
	The passenger journeys that consist of auto, as well as transit mode, create a new mode of transportation known as \textit{intermodal} or \textit{multimodal} transportation. \fg{In intermodal transportation, different agencies are responsible for providing the service for different legs of the passenger journey involving multiple modes. On the other hand, in multimodal transportation, only one agency is responsible for providing service for the entire passenger journey traversed using multiple modes.} The research on modeling such integrated transportation has been an active area of research for several decades (\citet{wilson1972statewide}). Many of these studies are focused on solving the transit FMLM problem by designing a multimodal transportation system. This includes designing a demand responsive transit feeder service (\citet{Wang2017a, maheo2017benders, Cayford2004, Koffman2004, Lee2017, Quadrifoglio2008, Shen2012, Li2009a}), using park-and-ride facilities (\citet{Khani2012, Khani2020, Kumar2021}), and integrating ridesharing and transit (\citet{Masoud2017, Stiglic2018, Bian2019, Ma2019, Chen2020, KUMAR2021102891}).\\
	
	Recently, the studies are being focused on modeling the integration of MoD and transit service for future mobility. They can be divided into two categories: simulation-based and optimization-based approaches. Under a set of assumptions on vehicle operations operations and dispatching strategies, the simulation-based studies simulate the passenger flow to assess the service quality of providing such mobility service (\citet{doi:10.1177/0361198120936267}). By using a four-step travel demand simulation model, \citet{Levin2015} predicted that the transit ridership will decrease and the number of personal vehicles will sharply increase as a result of the repositioning of vehicles resulting in congestion on the network. \citet{vakayil2017integrating} developed a simulation model that accounts for transit frequency, transfer costs, and MoD fleet re-balancing to use MoD as the FMLM solution to the transit mode. Their results show that such an integrated system can reduce VMT in the network by up to 50\%. \citet{Mendes2017} developed an event-based simulation model to compare the performance of the MoD system with the light rail system under the same demand patterns, alignment, and operating speed. They found that 150 vehicles with 12 passenger capacity would be needed to match the 39-vehicle light rail system if operated as a demand responsive system. Similar findings were also shown by the simulation model developed by \citet{Basu2018}. They showed that the introduction of MoD will act as the competitor of mass transit, however, to reduce congestion and maintain a sustainable urban transportation system, it cannot replace mass transit. \citet{Shen2018} also proposes and simulates an integrated autonomous vehicle and public transportation system based on the fixed modal split assumption. Using Singapore's organizational structure and demand characteristics, they propose to preserve high-demand bus routes while re-purposing low-demand bus routes and using shared MoD as an alternative. They found that the integrated system has the potential of serving the trips with less congestion, less passenger discomfort, and economically viable service. \citet{Wen2018a} included mode choice and various vehicle capacities and hailing strategies in an agent-based model to provide insights into fleet sizing and frequency of transit routes for the integrated system. A few studies have used an optimization-based approach to developing an integrated passenger flow model. \citet{8569381} developed a network flow model for intermodal service  that couples the interaction between MoD and transit by maximizing social welfare. Using this model, they proposed a tolling scheme for this intermodal system that helps in reducing the travel time, costs, and emissions as compared to standalone vehicle mode. \citet{LIU2019648} used Bayesian optimization to predict the mode choice of passengers in such a multimodal transportation system. \\

	The above-cited studies show that an integrated MoD and transit system can provide an efficient mode of transportation that is sustainable, fast, eco-friendly, and economically viable. The design of such a system requires solving a \textit{multimodal transportation network design problem} that can decide various aspects of MoD and transit modes. The problem of designing transit routes and their corresponding frequencies, which is commonly referred to as the Transit Network Design Problem (TNDP) or Line Planning Problem (LPP) in the literature, is itself a complex problem (\citet{CEDER1986331, baaj1991ai}). There has been a significant amount of research in modeling TNDP and developing solution algorithms for it. For a review on transit network design literature, we refer the interested reader to \citet{Guihaire2008}. Some aspects of the multimodal network design problem have been explored in a related research problem known as \textit{hub and arc location problem} (\citet{Maheo2019,Campbell2005, Campbell2005a}). For example, \citet{Maheo2019} proposed the design of a hub and shuttle public transit system in Canberra. They formulated a mixed-integer program to design high-frequency bus routes between key-hubs, where the first mile or last mile of trips is covered by the shuttles. However, the hub and arc location problem have a major limitation of not able to capture passenger behavior in the transit network. Recently, a couple of studies have proposed models for the transit network design in the context of integrated MoD and transit system (\citet{manser2017public, Pinto2020, doi:10.1287/trsc.2020.0987}). \citet{Pinto2020} develops a bi-level optimization model to design a transit network integrated with MoD service. The upper-level optimization problem modifies the frequency of the transit routes and determines the fleet size of MoD service and the lower-level model simulates the passenger trajectories based on a simulation-based traveler assignment model. Due to the complexity of the model, they presented a heuristic approach to solving the current problem. \citet{doi:10.1287/trsc.2020.0987} presents various aspects of this problem and develops a path-based mixed-integer programming model to decide which sections of the transit routes to operate and locate the transfer stops to allow for intermodal trips in the network. Due to an enormous number of possible paths in the network, they solve the current model using a branch-and-price approach.\\
	
	The design of an integrated MoD and transit system is an important problem that can influence the future mobility of travelers. \fg{Recent studies have made important contributions to this complex problem but have several limitations. It includes designing networks by routing passengers on the shortest path using a multi-commodity network flow model and presenting solution algorithms not applicable to solving large-scale instances of the problem. We attempt to address these limitations and outline the motivation to pursue the current research in the following points:}
	
	\begin{enumerate}
		\item Before designing the integrated system, we should understand how passengers would behave in an integrated system. It is common for studies to use the classic multi-commodity flow model to predict the behavior of travelers in the network design \fg{(\citet{conforti2014integer}, Chapter~4)}. This may be true if passenger trajectories are completely influenced by the mobility provider. However, this is certainly not applicable in the case of transit systems when passengers try to reduce the expected travel time based on wait time, travel time, and fare. Through this study, we extend the idea of hyperpath passenger assignment for a multimodal transportation system.
		
		\item We develop a mixed-integer optimization model that incorporates the multimodal passenger assignment and evaluates various aspects of an integrated system. The optimization program is difficult to solve, and we need efficient techniques to solve this problem. For this purpose, an exact method based on the Benders Decomposition is proposed to solve the large-scale instances of the problem. The method improves the classic Benders decomposition strategy by precluding the infeasibility cuts and including new cuts, such as disaggregated cuts, multiple cuts, and clique/cover cuts.  
	\end{enumerate}

	The rest of the article is structured as follows. \cref{sec:prelim} discusses the notations and definitions used in this article. Then, we present the multimodal passenger assignment model, which is incorporated in the design model of the integrated MoD and transit system in \cref{sec:design}. The solution algorithm to solve the design model is discussed in \cref{sec:sm}, which is followed by the results of numerical experiments conducted on Sioux Falls network \fg{in \cref{sec:6}}. Finally, conclusions and recommendations for future research are presented in \cref{sec:cf}.

	\section{Preliminaries and Background} \label{sec:prelim}	
	In this section, we get familiarize ourselves with the notations and concepts to be used in this article. Let us begin by considering a multimodal transportation network characterized by a digraph $G(N, A)$, where $N$ denotes the set of nodes that includes road intersections $N_R$, transit stops/stations $N_T$, and centroids of traffic analysis zones $Z$\footnote{A \textit{traffic analysis zone} (TAZ) or simply a zone is a geographical area where the demand is assumed to be concentrated on its centroid.} and $A$ denotes the set of links. We associate every node $i \in N$ in the network with exactly one zone $Z(i)$. The set of links coming out and going into a node $i \in N$ are denote by $FS(i) = \{(i, j): (i, j) \in A\}$ and $BS(i) = \{(j, i): (j, i) \in A\}$ respectively. Let $\mathfrak{d}: N \times N \mapsto \mathfrak{R}_{+}$ be the distance function between two nodes in the network. Depending on the mode, the links are also divided into three categories, namely transit, road, and walking links represented by $A_T, A_R, \text{and } A_W$ respectively. Let $O \subset Z$ and $D \subset Z$ be the subsets of centroids representing the origins and destinations respectively. The demand between various origin-destination pairs is represented by $\{d_{od}\}_{(o, d) \in O \times D}$. The overall network can be divided into three sub-networks which are described below:
	
	\begin{enumerate}
		\item \textit{Transit network}: The transit network is characterized by the subgraph $G_T(N_T, A_T)$ which consists of a set of candidate transit lines/routes denoted by the set $L$.  The terms ``route" and ``line" are used interchangeably throughout this article. Each line $l \in L$ is composed of a set of stops $N^l_T \subset N_T$ which are connected by edges $A^l_T \subset A_T$. The network also consists of transfer  links $A^{tr}_T$ between two nodes if the walking distance between those is less than the acceptable walking distance $\zeta$ (say 0.75mi), i.e., $A^{tr}_T = \{(n_1, n_2) \in N \times N : n_{1} \in N^{l_{1}}_T, n_{2} \in N^{l_{2}}_T \text{ for some } l_1, l_2 \in L \text{ s.t. } l_1 \ne l_2 \text{ and } \mathfrak{d}(n_1, n_2) \le \zeta  \}$.		
		\item \textit{Road network}: The road network is characterized by the subgraph $G_R(N_R, A_R)$, where $N_R$ denotes the set of nodes and $A_R$ denotes the set of links in the road network. 
		\item \textit{Walking links}: The walking links consists of access, egress, and mode transfer links. The \textit{access} and \textit{egress links} are defined as $A^a = \{(n_1, n_2) \in Z \times (N_T \cup N_R) : \mathfrak{d}(n_1, n_2) \le \zeta\}$ and $ A^e = \{(n_1, n_2) \in (N_T \cup N_R) \times Z  : \mathfrak{d}(n_1, n_2) \le \zeta\}$ respectively. Similarly, the \textit{mode transfer links} are defined as $A^m =\{(n_1, n_2) \in N_R \times N_T: \mathfrak{d}(n_1, n_2) \le \zeta\} \cup \{(n_1, n_2) \in N_T \times N_R: \mathfrak{d}(n_1, n_2) \le \zeta\}$. The access and egress walking links connect the centroids of various zones with the road/transit nodes and vice-versa, whereas mode transfer links are used to transfer between nodes of various modes. 
	\end{enumerate}

	\subsection{Costs}
	There is a subset of nodes in the network where passengers have to wait for the service. The collection of head nodes of links in the sets $A^a$, $A^{tr}_T$, and $A^m$ constitutes the \textit{waiting nodes} $N^w$.	Let us assume that $c: A \mapsto \mathfrak{R}_{+}$ and $w: N^w \mapsto \mathfrak{R}_{+}$ denote the cost (e.g., walking time, in-vehicle time, and fare) associated with the links in $A$ and wait time associated with the nodes in $N^w$ respectively. The cost of links is known beforehand (and is computed by adding the travel time and possible fare multiplied by the value of time). On the other hand, the wait time depends on the availability of MoD or transit service. 
	
	\subsection{Wait time computation}
	Unlike a personal vehicle, the MoD or transit service is not readily available, and passengers have to wait to access these services. So, it is important to quantify the expected wait time of these services, the computation of which is discussed below:
	
	\subsubsection{MoD service}\label{sec:waitMoD}
	We assume MoD operations in a network as a queuing system to compute the average wait time experienced by the passengers to access such service. The average wait time may not be justified for the planning of day-to-days operations but can be used to approximate the actual wait time experienced by the passengers for long-term strategic planning of the network, which is the focus of the current study. Therefore, we consider a stationary state of an MoD system, where the number of waiting customers $\mathcal{C}$ and vacant vehicles $\mathcal{V}$ are time-invariant. Using the Cobb-Douglas production function, the matching time between the customers and the vacant vehicles can be expressed as a function of $\mathcal{C}$ and $\mathcal{V}$.
	
	\begin{equation}
	m^{c-v} = \mathcal{A} (\mathcal{V})^{\alpha_1} (\mathcal{C})^{\alpha_2}
	\end{equation}
	
	\noindent where, $\alpha_1$ and $\alpha_2$ are defined as the elasticities of the matching function and $\mathcal{A}$ is a parameter specific to a zone, which is a function of the market area divided by the running speed in that zone (\citet{Zha2016}). According to Little's law, the long-term average number of customers/drivers in a stationary system is equal to the long-term average arrival rate $Q$ multiplied by the average wait time ($w^c/w^t$) that a customer/driver spends in the system before being matched (\citet{Zha2016}). 
	
	\begin{eqnarray}
	\mathcal{V} = Q w^t \label{eq:1} \\
	\mathcal{C} = Q w^c \label{eq:2}
	\end{eqnarray}
	
	Using \eqref{eq:2} and assuming $\alpha_1 = \alpha_2 \approx 1$ (\citet{douglas1972}), we can represent the stationary state ($m^{c-v} = Q$) as below:
	\begin{eqnarray}
	Q = \mathcal{A} \mathcal{V} (Q w^c)\\
	\implies   w^c = \frac{1}{\mathcal{A} \mathcal{V}} \label{eq:3}
	\end{eqnarray}
	Equation \eqref{eq:3} shows that the average wait time of customers wait in a zone to access the MoD service is a function of the vacant number of vehicles. To achieve the desired level of service (i.e., average wait time), a transportation agency needs to provide $\mathcal{V}$ vehicles at any point in time.

	\subsubsection{Transit service}\label{sec:waitTransit}
	Let us now discuss the wait time computation to access transit service at the head node of an access or transfer link in the transit network. Let $f: A_T \mapsto \mathfrak{R}$ be the frequency of the transit line associated with various links of the transit network. Let $\mathfrak{g}_i(w)$ be the probability distribution function of the wait time for line $i$.  According to \citet{larson1981urban}, for the passengers arriving randomly at a node, the probability density function of the wait time of line $i$ is related to the headway or bus inter-arrival time distribution $\mathfrak{h}_i(h)$ as:
	
	\begin{equation}\label{eq:4}
	\mathfrak{g}_i(w) = \frac{\int_{w}^\infty \mathfrak{h}_i(h) dh}{\mathbb{E}[\mathfrak{h}_i] }
	\end{equation}
	
	\noindent To evaluate the wait time distribution, we make the following assumptions:
	
	\begin{ass}\label{as:1}
		The inter-arrival time of a transit line $i \in L$ follows an exponential distribution with rate $f_i$.
	\end{ass}

	\begin{ass}\label{as:2}
		Passengers want to minimize the expected wait time to get to their destination. Therefore, at any node, passengers waiting to be served by the transit service have selected a list of attractive transit lines that can help them to get to their destination. 
	\end{ass}

	Both assumption \ref{as:1} and \ref{as:2} are common in the transit assignment literature (e.g., see \citet{Desaulniers2007}). By using the assumption \ref{as:1} and equation \eqref{eq:4}, one can evaluate the distribution function of the wait time $\mathfrak{g_i}(w)$ as: 
	
	\begin{equation}\label{eq:6}
		\mathfrak{g_i}(w) = f_ie^{-f_iw}, w \ge 0
	\end{equation}

	\begin{prop}\label{prop:1} (\citet{Spiess1989, gentile2005route}) Assuming that a passenger waiting at node $n \in N^w$ is served by the set of attractive transit lines $FS^{*}(n)$ and let $\mathfrak{F} = \sum_{j\in FS^{*}(n)} f_{j}$. With assumptions \ref{as:1} and \ref{as:2}, the following holds:
		\begin{enumerate}
			\item The probability that a passenger would choose transit line $i \in FS^{*}(n)$  is given by
			\begin{equation}\label{eq:7}
			P_i =  \frac{f_i}{\mathfrak{F}}
			\end{equation}
			\item The expected wait time conditional to boarding line $i \in FS^{*}(n)$ is given by	
			\begin{equation}
			EW_i = \frac{f_i}{\mathfrak{F}^2}
			\end{equation}
			\item The probability of wait time at node $n$ follows an exponential distribution with rate $\mathfrak{F}$. Therefore, the expected wait time at stop $n$ is given by $EW_n = \frac{1}{\mathfrak{F}}$.
		\end{enumerate}
	\end{prop}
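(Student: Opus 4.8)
The plan is to reduce the whole statement to the elementary theory of competing independent exponential random variables. By Assumption~\ref{as:1} together with equation~\eqref{eq:6}, the residual wait time that the passenger faces for each attractive line $i \in FS^{*}(n)$ is a random variable $W_i$ with density $\mathfrak{g}_i(w) = f_i e^{-f_i w}$, i.e. $W_i \sim \mathrm{Exp}(f_i)$; since distinct lines arrive according to independent processes, I would take the family $\{W_i\}_{i \in FS^{*}(n)}$ to be mutually independent. Under Assumption~\ref{as:2} a passenger boards the first attractive line to arrive, so the line actually boarded is the one attaining $\min_{i \in FS^{*}(n)} W_i$ and the realized wait time at the node is $W_n = \min_{i \in FS^{*}(n)} W_i$. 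With this reformulation, all three parts follow by direct integration.

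First I would establish part (3), the distribution of $W_n$. Because the $W_i$ are independent, the survival function factorizes,
\begin{equation*}
\mathbb{P}(W_n > w) = \prod_{i \in FS^{*}(n)} \mathbb{P}(W_i > w) = \prod_{i \in FS^{*}(n)} e^{-f_i w} = e^{-\mathfrak{F} w},
\end{equation*}
so $W_n \sim \mathrm{Exp}(\mathfrak{F})$ and hence $EW_n = 1/\mathfrak{F}$. Next, for part (1), I would compute the probability that line $i$ arrives first by conditioning on its arrival time and requiring every other line to arrive later,
\begin{equation*}
P_i = \int_0^\infty f_i e^{-f_i w} \prod_{j \ne i} \mathbb{P}(W_j > w)\, dw = \int_0^\infty f_i e^{-\mathfrak{F} w}\, dw = \frac{f_i}{\mathfrak{F}}.
\end{equation*}
Part (2) then uses the same integrand weighted by the wait length $w$,
\begin{equation*}
EW_i = \int_0^\infty w\, f_i e^{-f_i w} \prod_{j \ne i} e^{-f_j w}\, dw = f_i \int_0^\infty w\, e^{-\mathfrak{F} w}\, dw = \frac{f_i}{\mathfrak{F}^2}.
\end{equation*}

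The point that I expect to need the most care is the precise meaning of $EW_i$ in part (2). The quantity $f_i/\mathfrak{F}^2$ is \emph{not} the strict conditional expectation $\mathbb{E}[W_n \mid \text{board } i]$ --- by the standard independence of the minimizer and the minimum for competing exponentials, that conditional expectation equals $EW_n = 1/\mathfrak{F}$ --- but rather the joint expectation $\mathbb{E}[W_n\, \mathbb{1}\{\text{board } i\}] = P_i \cdot EW_n$, i.e. the contribution of line $i$ to the total expected wait. I would make this interpretation explicit and verify the consistency relation $\sum_{i \in FS^{*}(n)} EW_i = \sum_{i} f_i/\mathfrak{F}^2 = 1/\mathfrak{F} = EW_n$, which confirms that the per-line expected waits correctly decompose the node wait time. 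The only modelling inputs invoked are the exponential form from \eqref{eq:6} and the independence of the lines' arrival processes; everything else is routine computation.
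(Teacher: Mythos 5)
Your proof is correct and follows essentially the same route as the paper: both arguments compute $P_i$ and $EW_i$ from the integral $\int_0^\infty f_i e^{-\mathfrak{F}w}\,dw$ (resp.\ with weight $w$) over competing independent exponentials, and both obtain the $\mathrm{Exp}(\mathfrak{F})$ wait distribution at the node --- you via the survival-function product, the paper by summing the joint densities $\gamma_i(w) = f_i e^{-\mathfrak{F}w}$ over lines, which is the same computation in a different order. Your closing remark is in fact a refinement of the paper's wording: the paper calls $\gamma_i(w)$ the density of the wait time ``conditional to boarding line $i$,'' whereas, as you correctly observe, it is the joint (sub-probability) density integrating to $P_i$ rather than $1$, so $f_i/\mathfrak{F}^2 = P_i \cdot EW_n$ is the per-line contribution to the node's expected wait (the strict conditional expectation being $1/\mathfrak{F}$ by independence of the minimum and its argmin), and your consistency check $\sum_i EW_i = EW_n$ makes this interpretation explicit.
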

	
	\begin{proof}
		See Appendix \ref{app:1}.
	\end{proof}

\subsubsection{Combined MoD and transit service wait time}\label{sec:waitCombined}
	Before discussing the computation of the expected wait time involving both modes, we need to make an assumption about the wait time distribution of MoD service by utilizing the value of the average wait time of MoD service calculated in equation \eqref{eq:3}.
	\begin{ass}
		The wait time distribution of MoD service for passengers waiting at node $n$ follows an exponential distribution with rate $f_{MoD} = \mathcal{A}_{Z(n)} \mathcal{V}_{Z(n)}$, where $Z(n)$ is the zone associated to node $n$ and $\mathcal{V}_{Z(n)}$ is the number of vehicles deployed in zone $Z(n)$. 
	\end{ass}
	A passenger waiting at the head node of an access link faces the choice between MoD or transit mode. This is because the wait time of both services can vary based on the frequency provided, and a passenger will include one or both modes in their strategy to reduce the overall expected cost. This assumption simplifies the operation of MoD service as a transit service available at any stop of the network. The following proposition evaluates the expected wait time of that passenger.
	
	\begin{prop}\label{prop:2}
		Given that the wait time for transit and MoD mode follow an exponential distribution with rate $\mathfrak{F}$ and $f_{MoD}$ respectively and $\mathbb{F} = \mathfrak{F} + f_{MoD}$ , the following holds:
		\begin{enumerate}
			\item The probabilities of taking transit and MoD are given by $P_{MoD} = \frac{f_{MoD}}{\mathbb{F}}$ and $P_{transit} = \frac{\mathfrak{F}}{\mathbb{F}}$ respectively. 
			\item The expected wait time of the passenger departing from an access node $n$ served by both MoD and transit service is given by 
			$EW_n = \frac{1}{\mathbb{F}}$
		\end{enumerate}
	\end{prop}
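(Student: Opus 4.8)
The plan is to model the passenger's decision as a race between two independent ``arrival'' processes — the aggregate transit service and the MoD service — and then to recognize that this is structurally identical to the situation already settled in Proposition~\ref{prop:1}. Concretely, I would let $T$ denote the time until the next attractive transit vehicle arrives and $M$ the time until a matched MoD vehicle arrives. By the hypotheses of the proposition, $T \sim \mathrm{Exp}(\mathfrak{F})$ and $M \sim \mathrm{Exp}(f_{MoD})$, and I would assume these two processes are independent, since transit headways and MoD matching are governed by unrelated mechanisms. A rational passenger minimizing wait departs on whichever service appears first, so the realized wait time is $W = \min(T, M)$ and the mode chosen is the index attaining that minimum.

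With this formulation, part~(1) follows from the standard memoryless ``competing exponentials'' computation. I would condition on the MoD arrival time and integrate, namely
\[
P_{MoD} = \mathbb{P}(M < T) = \int_0^\infty \mathbb{P}(T > m)\, f_{MoD}\, e^{-f_{MoD} m}\, dm = f_{MoD}\int_0^\infty e^{-(\mathfrak{F}+f_{MoD})m}\, dm = \frac{f_{MoD}}{\mathbb{F}},
\]
and the complementary event yields $P_{transit} = \mathbb{P}(T < M) = \mathfrak{F}/\mathbb{F}$, the tie event $T=M$ having probability zero. For part~(2) I would compute the survival function of the minimum directly: by independence, $\mathbb{P}(W > w) = \mathbb{P}(T > w)\,\mathbb{P}(M > w) = e^{-\mathfrak{F} w}\, e^{-f_{MoD} w} = e^{-\mathbb{F} w}$, so $W \sim \mathrm{Exp}(\mathbb{F})$ and hence $EW_n = 1/\mathbb{F}$.

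An even shorter route — and the one I would lead with — avoids recomputation by appending MoD to the bundle of attractive lines. Because the aggregate transit wait is itself $\mathrm{Exp}(\mathfrak{F})$ with $\mathfrak{F} = \sum_{j \in FS^{*}(n)} f_j$, treating MoD as one additional attractive ``line'' of frequency $f_{MoD}$ simply replaces the attractive set by one of total frequency $\mathbb{F} = \mathfrak{F} + f_{MoD}$. Proposition~\ref{prop:1} then applies verbatim: its item~(1) gives $P_{MoD} = f_{MoD}/\mathbb{F}$ and, summing the shares of the transit lines, $P_{transit} = \mathfrak{F}/\mathbb{F}$, while its item~(3) gives $EW_n = 1/\mathbb{F}$.

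There is no serious obstacle in the algebra; the only point requiring care is the modeling justification. I would state explicitly the independence of the transit and MoD processes and invoke the aggregation (memorylessness) property of the exponential that lets me collapse the already-pooled transit service and the MoD service into a single exponential pool — this is exactly what legitimizes invoking Proposition~\ref{prop:1} with MoD as an extra line. I would also note that the preceding assumption that the MoD wait time is exponential is precisely what renders the two modes commensurable, so that the same min-of-exponentials machinery governs the combined choice.
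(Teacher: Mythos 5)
Your proposal is correct and takes essentially the same route as the paper: the paper's proof computes $P_{transit}$ as the race integral $\int_0^\infty \mathfrak{F}e^{-\mathfrak{F}w}\,e^{-f_{MoD}w}\,dw = \mathfrak{F}/\mathbb{F}$ and obtains $EW_n$ by integrating $w$ against the combined density $\mathbb{F}e^{-\mathbb{F}w}$, which is exactly your competing-exponentials computation (your survival-function derivation of $W \sim \mathrm{Exp}(\mathbb{F})$ in part~2 is an equivalent presentation of the same step, and your preferred shortcut of absorbing MoD into the attractive set and citing Proposition~\ref{prop:1} is just a repackaging of this argument, since Proposition~\ref{prop:1}'s proof is itself this min-of-exponentials calculation). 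As a minor bonus, your computation of $P_{MoD}$ gives the correct value $f_{MoD}/\mathbb{F}$, whereas the paper's proof contains a typo stating $P_{MoD} = \mathfrak{F}/\mathbb{F}$.
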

	
	\begin{proof}
		See Appendix \ref{app:1}.
	\end{proof}

	\begin{figure}[h!]		
		\begin{subfigure}[t]{0.90\textwidth}            
			\centering
			\begin{tikzpicture}[thick, scale=0.8, every node/.style={scale=0.6}]
			\draw[solid, ->]  (-11.2,-2)  -- (-10,-2);	
			\node[text width=4cm] at (-8.2,-2) {Road Link};
			\draw[dash dot, ->] (-7.8,-2) -- (-6.8,-2);	
			\node[text width=4cm] at (-5,-2) {Access link};
			\draw[solid, ->, blue] (-4.8,-2) -- (-4,-2);	
			\draw[solid, ->, green] (-3.8,-2) -- (-3,-2);
			\draw[solid, ->, red] (-2.8,-2) -- (-2,-2);	
			\node[text width=4cm] at (-0.1,-2) {Transit link};
			\end{tikzpicture}
			\begin{tikzpicture}[thick, scale=0.6, every node/.style={scale=0.5}]   
			\node[shape=circle,draw=black] (11) at (-2,0) {$Z_1$}; 
			\node[shape=circle,draw=black] (1) at (0,0) {$1$};
			\node[shape=circle,draw=black] (6) at (6,2) {$2$};
			\node[shape=circle,draw=black] (4) at (6, -2) {$4$};
			\node[shape=circle,draw=black] (8) at (12,0) {$8$};
			
			\node[shape=circle,draw=black] (9) at (12, -2) {$9$};
			\node[shape=circle,draw=black] (12) at (16,0) {$Z_2$}; 
			\node[shape=circle,draw=black] (13) at (2,4) {$R_1$}; 
			\node[shape=circle,draw=black] (14) at (6,4) {$R_2$};
			\node[shape=circle,draw=black] (15) at (14,4) {$R_3$};  
			
			\node[shape=circle,draw=black] (16) at (2,-4) {$R_4$}; 
			\node[shape=circle,draw=black] (17) at (6,-4) {$R_5$};
			\node[shape=circle,draw=black] (18) at (14,-4) {$R_6$}; 
			
			\path [->, dash dot] (11) edge node[rotate=55, yshift=7pt, xshift =-1pt] {} (16)  ;
			\path [->, dash dot] (15) edge node[rotate=55, yshift=7pt, xshift =-1pt] {} (12) ;
			\path [->, dash dot] (18) edge node[rotate=55, yshift=7pt, xshift =-1pt] {} (12) ;
			
			\path [->, dash dot] (14) edge node[rotate=55, yshift=7pt, xshift =-1pt] {} (6)  ;
			\path [->, dash dot] (17) edge node[rotate=55, yshift=7pt, xshift =-1pt] {} (4) ;

			\path [->, dash dot] (11) edge node[rotate=55, yshift=7pt, xshift =-1pt] {} (13)  ;
			\path [->] (13) edge node[rotate=55, yshift=7pt, xshift =-1pt] {} (14)  ;
			\path [->] (14) edge node[rotate=55, yshift=7pt, xshift =-1pt] {} (15) ;
			\path [->] (16) edge node[rotate=55, yshift=7pt, xshift =-1pt] {} (17)  ;
			\path [->] (17) edge node[rotate=55, yshift=7pt, xshift =-1pt] {} (18)  ;

			\path [->] (1) edge node[rotate=55, yshift=7pt, xshift =-1pt] {} (6) [color=red];
			\path [->] (1) edge node[rotate=55, yshift=7pt, xshift =-1pt] {} (4) [color=green];
			\path [->,bend right] (6) edge node[rotate=55, yshift=7pt, xshift =-1pt] {} (8) [color=blue];
			\path [->, bend left] (6) edge node[rotate=55, yshift=7pt, xshift =-1pt] {} (8)  [color=red];
			\path [->, dash dot] (8) edge node[rotate=55, yshift=7pt, xshift =-1pt] {} (12) ;
			\path [->, dash dot] (9) edge node[rotate=55, yshift=7pt, xshift =-1pt] {} (12) ;

			\path [->] (4) edge node[rotate=55, yshift=7pt, xshift =-1pt] {} (9) [color=green];

			\path [->, dash dot] (11) edge node[rotate=55, yshift=7pt, xshift =-1pt] {} (1);
			\end{tikzpicture}
			
			\caption{Multimodal network}
			\label{fig:stNet}
		\end{subfigure}  

		\begin{subfigure}[t]{0.90\textwidth}    
			\centering
				\begin{tabular}{l|l}
				\hline
				\textbf{Line}  & \textbf{Frequency} \\ \hline
				Red   & 1/6       \\ 
				Green & 1/2       \\ 
				Blue  & 1/3       \\ \hline
			\end{tabular}     
			\hspace{1mm}
			\begin{tabular}{l|l}
				\hline
				\textbf{Zone} & \textbf{Vehicles} \\ \hline
				 $Z_1$    & 100       \\ 
				 $Z_2$    & 50       \\ \hline
			\end{tabular}			
			\caption{Frequency and number of vehicles}
			\label{tab:line}
		\end{subfigure}
		\caption{An illustrative example of a multimodal network}
		\label{fig:stab}
	\end{figure}

	To get more insights into the wait time computation, let us consider an example. Figure \ref{fig:stab}(a) shows an illustration of a multimodal transportation network. It consists of 2 zones, 6 nodes and 8 links as part of the road network, and 5 nodes and 10 links as part of the transit network. The transit network has 3 transit lines (color-coded) whose frequencies are shown in Figure \ref{fig:stab}(b). There are 100 and 50 vehicles deployed in zone 1 and 2 respectively. By using Prop \ref{prop:1}, we can evaluate the probability of passengers taking various transit lines in the network.  For example, the probabilities of choosing red line and green line at stop 1 are $\frac{1/6}{1/6 + 1/2} = 0.25$ and $\frac{1/2}{1/6 + 1/2} = 0.75$ respectively. The expected wait time at stop 1 is equal to $12/8 = 1.5$ minutes. Similarly, using Proposition \ref{prop:2}, the probabilities of choosing MoD and transit at $Z_1$ are $\frac{0.0017*100}{0.0017*100 + 8/12} = 0.2$  and $0.8$ respectively (assuming $\mathcal{A}_1 = 0.0017$). The overall expected wait time at node $Z_1$ is 1.19 minutes which is less than 1.5 minutes by only considering transit service as part of the strategy. \\
	
	We further use Proposition \ref{prop:1} and \ref{prop:2} to formulate the multimodal passenger assignment model. For this purpose, we extend the frequency-based transit assignment model proposed by \citet{Spiess1989} to a multimodal transportation system. Before moving forward, we must make the following assumptions:
 	
 	\begin{ass}\label{as:4}
 		\begin{enumerate}[label=(\alph*)]
 			\item Ridepooling is not allowed, i.e., the MoD service serves one passenger at a time.
 			\item The transit lines are assumed to have unlimited capacity. 
 			\item Passengers want to reduce their expected generalized travel cost consisting of travel time, wait time, and fare to get to their destination.
 		\end{enumerate} 		
 	\end{ass}
 
 	The ridepooling problem requires matching of customers using a specific algorithm. This is an important aspect to accurately estimate the cost of day-to-day operations. Nevertheless, ignoring ridepooling will give us an upper bound on the number of vehicles required to serve various zones. The modeling of passenger behavior while incorporating the capacity constraints (congestion) is a difficult problem. The congestion is important to consider since it causes denied boarding, which leads to increased wait time, travel time, and discomfort. Several authors have tried to include congestion into frequency-based transit assignment models through various approaches, namely, discomfort function (\cite{Spiess1989}), effective frequency (\cite{de1993transit, Cominetti2001, cepeda2006frequency, Leurent2014}), and failure-to-board probabilities (\cite{Kurauchi2003}). Despite the effort, there is no tractable closed-form of congested frequency-based transit assignment model. On the other hand, it would not be ideal to include transit vehicle capacity constraints into the assignment program (e.g., in \citet{Szeto2014b}) because doing so may lead to unrealistic passenger behavior, which previous studies on congested frequency-based transit assignments were trying to avoid. Therefore, we use an uncapaciated assignment for the design problem. Assumption \ref{as:4}(c) is a common in the assignment literature. The relaxation of above assumptions are research topics in their own right, therefore, a discussion on possible ways to relax them is provided in \cref{sec:cf}. To proceed further, let us define a variable $\{g_{ik}\}_{i \in N, k \in D}$ as below:
 	
 	\[ g_{ik} = \begin{cases} 
 	d_{ik},  & \text{if } i \ne k, (i, k) \in O \times D   \\	
 	-\sum_{o \in O} d_{ok},  & \text{if } i = k  \\		
 	0,  &  \text{otherwise} \\
 	\end{cases}
 	\]
 	
	Furthermore, let us denote $v_{ak}$ and $W_{ik}$ as the flow of passengers on link $a \in A$ and waiting at node $i \in N^w$ resp. destined to $k \in D$. The assignment optimization program is presented below:
	\begin{subequations}\label{eq:ma}
		\begin{align}
			& \underset{v, W}{\text{minimize}}
			& & \sum_{k \in D} \left(\sum_{a \in A} c_{a} v_{ak} + \sum_{i \in N^w} W_{ik} \right) \\
			& \text{subject to}
			& & \sum_{a \in FS(i)}  v_{ak} = \sum_{a \in BS(i)}  v_{ak} +  g_{ik}, \forall i \in N, \forall k \in D  \label{eq:cons}\\
			& & & v_{ak} \le  f_a W_{ik}, \forall a \in FS(i): a \in A_T, \forall i \in N^w, \forall k \in D  \label{eq:prop1} \\
			& & & v_{ak} \le  \mathcal{A}_{Z(i)} \mathcal{V}_{Z(i)} W_{ik}, \forall a \in FS(i): a \in A_R, \forall i \in N^w, \forall k \in D  \label{eq:prop2}\\
			& & &  v_{ak} \ge 0, \forall a \in A, \forall k \in D \label{eq:32}
		\end{align}
	\end{subequations}
	The assignment program \eqref{eq:ma} minimizes the total expected link costs and wait time at waiting nodes experienced by the passengers in a mulimodal network subject to the flow conservation constraint at each node \eqref{eq:cons}, flow proportion constraints \eqref{eq:prop1}-\eqref{eq:prop2}, and the non-negativity and binary constraints \eqref{eq:32}. The flow proportion constraints uses the probability of selecting an option $a \in FS(i)$ (if that option is a part of the strategy of the passengers traveling to destination $k \in D$) and multiplies it with the number of passengers waiting at that node. Note that the probability of selecting an option (MoD or transit line) is calculated in Proposition \ref{prop:2}. 

	\section{Design of an integrated MoD and transit system }\label{sec:design}
	In this section, we present an optimization model incorporating the assignment program proposed in previous section for the design of an integrated MoD and transit system. The optimization program is formalized as a Mixed Integer Non-linear Program (MINLP). In this model, we determine which transit routes to keep operating among the current transit routes in the city network, decide the optimal frequency of those operating routes, and finally, determine the fleet size of vehicles required to provide MoD service in various zones. Note that one can also include new candidate transit routes as part of the design plan. The sets, parameters, and decision variables for the optimization model are summarized in Table \ref{tab:dec}. \fg{The 0.01 vehicles in the set of possible number of vehicles to be deployed in a zone is a dummy element to represent that no vehicles are assigned in a zone, and that zone can be served by transit service only. This will help design an efficient solution algorithm for the design problem (see Proposition~\ref{prop:5}).} \\

	\begin{table}
		\caption{Sets, decision variables and parameters used in the design model}
		\label{tab:dec}				
		\begin{tabular}{r c p{10cm} }
			\toprule
			\multicolumn{3}{c}{\underline{Sets}}\\
			\multicolumn{3}{c}{}\\
			$\mathfrak{B}$ & $\triangleq$ & Set of binary values \\
			$L$ & $\triangleq$ & Set of candidate transit lines \\
			$\Theta = \{2, 3, 4, 6, 12\}$ & $\triangleq$ & Set of possible frequencies of a line (buses/hr) \\
			$\Omega = \{0.01, 50, 100, 200, 500\}$ & $\triangleq$ & Set of possible number of vehicles deployed in a zone \\
			
			\multicolumn{3}{c}{}\\
			\multicolumn{3}{c}{\underline{Parameters}}\\
			\multicolumn{3}{c}{}\\
			$\bar{B}$ & $\triangleq$ & Total number of buses available \\
			$\bar{F}$ & $\triangleq$ & Total number of vehicles available \\

			\multicolumn{3}{c}{}\\
			\multicolumn{3}{c}{\underline{Decision Variables}}\\
			\multicolumn{3}{c}{}\\
			
			$x_l$ & $=$ & \(\left\{\begin{array}{rl}
			1,  & \text{if line $l \in L$ is kept operating} \\
			0,  & \text{otherwise} \end{array} \right.\)\\

			$y_{lf}$ & $=$ & \(\left\{\begin{array}{rl}
			1,  & \text{if frequency $f \in \Theta$ is adopted for line $l \in L$} \\
			0,  & \text{otherwise} \end{array} \right.\)\\

			$\mathcal{N}_{zn} $ & $=$ & \(\left\{\begin{array}{rl}
			1,  & \text{ if a fleet of size $n \in \Omega$ is deployed in zone $z \in Z$} \\
			0,  & \text{otherwise} \end{array} \right.\)\\
			
			$v_{ak}$ & $=$ & Flow of passengers on link $a \in A$ destined to $k \in D$ \\
			
			$W_{ik}$ & $=$ & Wait time of passengers waiting at node $i \in N^w$ destined to $k \in D$ \\

			\bottomrule
		\end{tabular}
	\end{table}

	The design of an integrated transit and MoD system should consider both passenger and operator perspectives. The operator's perspective is to provide the service at minimum cost, and the passengers' perspective is to minimize the overall cost of travel (including travel time, wait time, and fare). Based on these perspectives, the design optimization model is presented as \eqref{eq:11}. The objective function is the sum of the total expected travel cost and wait time experienced by the passengers in the network. The mapping $\mathcal{B}: L \times \Theta \mapsto \mathbb{N}$ used in \eqref{eq:busesAv} is defined as $\mathcal{B}(l, f) = \left(f \times \sum_{a \in A_T^l} 2 t_a\right)$, which describes that the  number of buses required to provide frequency $f \in \Theta$ for a line $l \in L$ is equal to the product of the frequency and round trip travel time. \eqref{eq:busesAv} constrain the total number of buses needed to be less than or equal to $\bar{B}$, which can be evaluated for a given budget. \eqref{eq:12} describes the flow conservation constraints at every node for every destination. For a given MoD and bus fleet assignment, \eqref{eq:13}-\eqref{eq:14} describe the passenger flow on each link based on the frequency of the bus route and MoD service. A frequency value can be assigned to a route if that route is kept operating as constrained by \eqref{eq:15}. \eqref{eq:21} describe that exactly one of the fleet sizes can be adopted for each zone. \eqref{eq:16} constrain the required number of vehicles to be less than or equal to $\bar{F}$. Finally, \eqref{eq:17}, \eqref{eq:22} and \eqref{eq:18}-\eqref{eq:20} are the non-negativity constraints of the flow, wait time being free variables, and binary constraints of design variables respectively. One can also incorporate other constraints related to the budget of operating MoD and transit service but for the sake of simplicity, we do not include them here.

	\begin{subequations}\label{eq:11}
		\begin{align}
		& \underset{v, W, x, y, \mathcal{N}}{\text{minimize}}
		& & \sum_{k \in D} \left(\sum_{a \in A} c_{a} v_{ak} + \sum_{i \in N^w} W_{ik}\right) \\
		& \text{subject to}
		& & \sum_{l \in L} \sum_{f \in \Theta} \mathcal{B}(l, f) \times y_{lf}  \le \bar{B}   \label{eq:busesAv}\\
		& & & \sum_{a \in FS(i)}  v_{ak} = \sum_{a \in BS(i)}  v_{ak} +  g_{ik}, \forall i \in N, \forall k \in D  \label{eq:12}\\
		& & & v_{ak} \le  \left(\sum_{f\in \Theta} f y_{l(a)f}\right) W_{ik}, \forall a \in FS(i): a \in A_T, \forall i \in N^w, \forall k \in D \label{eq:13}\\
		& & & v_{ak} \le  \mathcal{A}_{Z(i)} \left(\sum_{n \in \Omega}  n\mathcal{N}_{Z(i)n}\right) W_{ik}, \forall a \in FS(i): a \in A_R, \forall i \in N^w, \forall k \in D \label{eq:14}\\
		& & & \sum_{f \in \Theta} y_{lf} = x_l, \forall l \in L \label{eq:15}\\
		& & & \sum_{n \in \Omega}\mathcal{N}_{zn} = 1, \forall z \in Z \label{eq:21}\\
		& & & \sum_{z \in Z}\left(\sum_{n \in \Omega}  n\mathcal{N}_{zn}\right) \le \bar{F} \label{eq:16}\\
		& & &  v_{ak} \ge 0, \forall a \in A, \forall k \in D \label{eq:17}\\
		& & & W_{ik} \text{ free }, \forall i \in N^w, \forall k \in D \label{eq:22}\\
		& & &  x_l \in \mathfrak{B}, \forall l \in L \label{eq:18}\\
		& & &  y_{lf} \in \mathfrak{B}, \forall f \in \Theta,  \forall l \in L \label{eq:19}\\
		& & & \mathcal{N}_{zn} \in \mathfrak{B}, \forall n \in \Omega, z \in Z \label{eq:20}
		\end{align}
	\end{subequations}

 	The optimization program \eqref{eq:11} is a mixed-integer non-linear program (MINLP). The non-linearity arise from the constraints \eqref{eq:13}-\eqref{eq:14}. It is computationally difficult to solve this program for large instances, which can be attributed to the integer constraints \eqref{eq:18}-\eqref{eq:20} and the bilinear constraints \eqref{eq:13}-\eqref{eq:14}. The bilinear constraints are particularly difficult to handle due to the non-convex nature even if the integrality constraints of the involved variables are relaxed. Fortunately, in this case, the non-convexity arises due to the product of continuous and binary variables, which can be exactly relaxed by employing McCormick relaxations. Let $t_{faik} = y_{l(a)f} W_{ik}, \forall f \in \Theta, \forall a \in FS(i): a \in A_T, \forall i \in N^w, \forall k \in D$ and $\omega_{ink} = \mathcal{N}_{Z(i)n} W_{ik}, \forall n \in \Omega,  \forall i \in N^w \cap N_R, \forall k \in D$. Further, let us assume that there exists a finite upper and lower bound on the variable $W_{ik}$ , i.e.,  $\underline{W}_{ik} \le W_{ik} \le \overline{W}_{ik}$. Then, $t_{faik}$ and $\omega_{ink}$ can be expressed as the set of linear constraints \eqref{eq:22a}-\eqref{eq:23} and \eqref{eq:24}-\eqref{eq:25} respectively:
 		\begin{subequations}
 		\begin{align}
 			\overline{W}_{ik} - W_{ik}  + t_{faik} - \overline{W}_{ik}y_{l(a)f} \ge 0 \label{eq:22a}\\
 			\overline{W}_{ik} y_{l(a)f} - t_{faik} \ge 0 \label{eq:26}\\
 			t_{faik} - \underline{W}_{ik}y_{l(a)f} \ge 0 \label{eq:27}\\
 			W_{ik} - \underline{W}_{ik} - t_{faik} + \underline{W}_{ik}y_{l(a)f} \ge 0  \label{eq:23}
 		\end{align}			
 	\end{subequations}
 	
 	\begin{subequations}
 		\begin{align}
 			\overline{W}_{ik} - W_{ik}  + \omega_{ink} - \overline{W}_{ik} \mathcal{N}_{Z(i)n} \ge 0 \label{eq:24}\\
 			\overline{W}_{ik} \mathcal{N}_{Z(i)n} - \omega_{ink} \ge 0\\
 			\omega_{ink} - \underline{W}_{ik}\mathcal{N}_{Z(i)n} \ge 0\\
 			W_{ik} - \underline{W}_{ik} - \omega_{ink} + \underline{W}_{ik}\mathcal{N}_{Z(i)n} \ge 0 \label{eq:25}
 		\end{align}			
 	\end{subequations}
 
	\section{Solution methodology} \label{sec:sm}
	After reformulating.0 the bilinear constraints \eqref{eq:13}-\eqref{eq:14}, the resulting model is a Mixed Integer Linear Program (MILP). The program is still difficult to solve efficiently for large instances. However, the structure of the problem allows us to use decomposition techniques such as Benders decomposition to efficiently solve it. In this section, we present the details of the Benders reformulation for this problem, along with the proposed algorithmic enhancements. 
	
	\subsection{Benders Reformulation}
	Benders decomposition (\citet{Geoffrion1972}) is an elegant way of solving a large scale MILP by iteratively solving two simpler subproblems: the relaxed master problem (RMP),  which is a relaxation of the original problem and a subproblem (SP) which provides inequalities/cuts to strengthen the RMP. The subproblem should possess strong duality properties. Let us consider the network design problem described in the previous section. For a given feasible value of design decision variables $\hat{x}, \hat{y}, \hat{\mathcal{N}}$ and with $\underline{W}_{ik} = 0$ (wait time cannot be negative), we can rewrite the original problem as a \textit{Benders subproblem} \eqref{eq:28}.

		\begin{subequations}\label{eq:28}
		\begin{align}
		&  z^{SP} (\hat{x}, \hat{y}, \hat{\mathcal{N}}) =\underset{v, W, \omega, t}{\text{min}}
		& &  \sum_{k \in D} \left(\sum_{a \in A} c_{a} v_{ak} + \sum_{i \in N^w} W_{ik}\right) \\
		& \text{subject to}
		& & \sum_{a \in FS(i)}  v_{ak} = \sum_{a \in BS(i)}  v_{ak} +  g_{ik}, \forall i \in N, \forall k \in D \label{eq:29}\\
		& & & v_{ak} \le  \left(\sum_{f\in \Theta} f t_{faik} \right), \forall a \in FS(i): a \in A_T, \forall i \in N^w, \forall k \in D \label{eq:33}  \\
		& & &  W_{ik}  - t_{faik} \le  \overline{W}_{ik} (1-\hat{y}_{l(a)f}),  \forall f \in \Theta, \forall a \in FS(i): a \in A_T, \forall i \in N^w, \forall k \in D \label{eq:40} \\
		& & &  t_{faik} \le \overline{W}_{ik} \hat{y}_{l(a)f},  \forall f \in \Theta, \forall a \in FS(i): a \in A_T, \forall i \in N^w, \forall k \in D\label{eq:41}  \\
		& & & W_{ik}  - t_{faik}  \ge 0,  \forall f \in \Theta, \forall a \in FS(i): a \in A_T, \forall i \in N^w, \forall k \in D  \label{eq:34}  \\	
		& & & v_{ak} \le  \mathcal{A}_{Z(i)} \left(\sum_{n \in \Omega}  n \omega_{ink}\right), \forall a \in FS(i): a \in A_R, \forall i \in N^w, \forall k \in D \label{eq:36} \\
		& & & W_{ik} - \omega_{ink}   \le  \overline{W}_{ik}(1-\hat{\mathcal{N}}_{Z(i)n}), \forall n \in \Omega, \forall i \in N^w \cap N_R, \forall k \in D\\
		& & & \omega_{ink} \le \overline{W}_{ik} \hat{\mathcal{N}}_{Z(i)n}, \forall n \in \Omega, \forall i \in N^w \cap N_R, \forall k \in D\\
		& & & W_{ik} - \omega_{ink} \ge 0, \forall n \in \Omega, \forall i \in N^w \cap N_R, \forall k \in D \label{eq:37}\\
		& & & v_{ak} \ge 0, \forall a \in A, \forall k \in D\\
		& & & t_{faik}\ge 0, \forall f \in \Theta,  \forall a \in FS(i): a \in A_T, \forall i \in N^w, \forall k \in D \label{eq:40a}\\
		& & & \omega_{ink} \ge 0, \forall n \in \Omega, \forall i \in N^w \cap N_R, \forall k \in D\label{eq:30}
		\end{align}
	\end{subequations}
	
	Let $\mathcal{X}^{SP} = \{(v, W, \omega, t): \eqref{eq:29}-\eqref{eq:30} \}$ be the feasible region of the Benders subproblem \eqref{eq:28}.  Further, let us denote $\mathcal{X}^{MA} = \{(v, W):\eqref{eq:cons}-\eqref{eq:32}\}$ as the feasible region of the multimodal assignment linear program. We can show the following result:
	
	\begin{prop}\label{prop:4}
		The projection of the feasible region of the subproblem \eqref{eq:28} on to the space of $v$ and $W$ is same as the feasible region of the multimodal assignment problem \eqref{eq:ma} i.e., 
		$$proj_{v, W} \mathcal{X}^{SP} = \mathcal{X}^{MA} $$
	\end{prop}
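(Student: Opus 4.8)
The plan is to establish the set equality by proving the two inclusions $proj_{v,W}\mathcal{X}^{SP} \subseteq \mathcal{X}^{MA}$ and $\mathcal{X}^{MA} \subseteq proj_{v,W}\mathcal{X}^{SP}$ separately. The engine behind both directions is the observation that, because the design data $\hat{y}$ and $\hat{\mathcal{N}}$ are fixed to binary values obeying \eqref{eq:15} and \eqref{eq:21}, the McCormick inequalities embedded in \eqref{eq:40}--\eqref{eq:34} (and their $\omega$-counterparts) are not merely a relaxation but are \emph{exact}: they pin each auxiliary variable to the bilinear product it was introduced to represent. Throughout I identify the assignment data with the design through $f_a = \sum_{f \in \Theta} f\,\hat{y}_{l(a)f}$ and $\mathcal{V}_{Z(i)} = \sum_{n \in \Omega} n\,\hat{\mathcal{N}}_{Z(i)n}$.

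For the forward inclusion I take any $(v, W, \omega, t) \in \mathcal{X}^{SP}$ and argue pointwise. Fixing a transit link $a \in FS(i)$ and a frequency $f$, there are two cases. If $\hat{y}_{l(a)f}=0$, then \eqref{eq:41} gives $t_{faik}\le 0$ while \eqref{eq:40a} gives $t_{faik}\ge 0$, so $t_{faik}=0$. If $\hat{y}_{l(a)f}=1$, then \eqref{eq:34} gives $t_{faik}\le W_{ik}$ and \eqref{eq:40} gives $W_{ik}-t_{faik}\le 0$, so $t_{faik}=W_{ik}$. In both cases $t_{faik}=\hat{y}_{l(a)f}W_{ik}$, and since \eqref{eq:15} permits at most one active frequency per line, $\sum_{f\in\Theta} f\,t_{faik} = f_a W_{ik}$. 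Substituting into the coupling constraint \eqref{eq:33} recovers exactly $v_{ak}\le f_a W_{ik}$, i.e. \eqref{eq:prop1}. The identical case analysis on the $\omega$-block together with \eqref{eq:36} yields \eqref{eq:prop2}; the flow-conservation rows \eqref{eq:29} coincide with \eqref{eq:cons}, and $v\ge 0$ is common to both systems. Hence $(v,W)\in\mathcal{X}^{MA}$.

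For the reverse inclusion I start from $(v,W)\in\mathcal{X}^{MA}$ and exhibit a lift: set $t_{faik}:=\hat{y}_{l(a)f}W_{ik}$ and $\omega_{ink}:=\hat{\mathcal{N}}_{Z(i)n}W_{ik}$, then verify that $(v,W,\omega,t)$ satisfies every row of \eqref{eq:29}--\eqref{eq:30}. The coupling rows \eqref{eq:33} and \eqref{eq:36} hold because, by construction, $\sum_f f\,t_{faik}=f_aW_{ik}$ and $\mathcal{A}_{Z(i)}\sum_n n\,\omega_{ink}=\mathcal{A}_{Z(i)}\mathcal{V}_{Z(i)}W_{ik}$, which are precisely the right-hand sides of \eqref{eq:prop1}--\eqref{eq:prop2}. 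The remaining McCormick rows are then checked by the same $\{0,1\}$ case split on $\hat{y}$ and $\hat{\mathcal{N}}$, each of which reduces to the requirement $0\le W_{ik}\le\overline{W}_{ik}$.

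The main obstacle, and the only place the argument is not purely mechanical, is the boundedness of $W$. The lift in the reverse inclusion is legitimate only when $0\le W_{ik}\le\overline{W}_{ik}$, so I must argue (i) that $W_{ik}\ge 0$ holds on $\mathcal{X}^{MA}$ — which follows at every waiting node carrying an attractive option, since $0\le v_{ak}\le f_a W_{ik}$ (or its MoD analogue) forces $W_{ik}\ge 0$ — and (ii) that $\overline{W}_{ik}$ is a valid upper bound, i.e. large enough that the constraint $W_{ik}\le\overline{W}_{ik}$ does not excise any assignment-feasible point. For the latter I will take $\overline{W}_{ik}$ to be the largest attainable attractive-set wait time $1/f_{\min}$ implied by Proposition~\ref{prop:1}, so that $W_{ik}\le\overline{W}_{ik}$ is respected on the region of interest. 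Once this boundedness is settled, both inclusions close and the equality $proj_{v, W}\mathcal{X}^{SP}=\mathcal{X}^{MA}$ follows.
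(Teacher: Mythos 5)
Your proof follows the same two--inclusion skeleton as the paper's, and your reverse inclusion uses the identical lift $t_{faik} := \hat{y}_{l(a)f} W_{ik}$, $\omega_{ink} := \hat{\mathcal{N}}_{Z(i)n} W_{ik}$. Where you differ is the forward inclusion: the paper disposes of it in one line by invoking Fourier--Motzkin elimination, whereas you carry out the projection by hand through the binary case split ($\hat{y}_{l(a)f}=0$ forces $t_{faik}=0$ via \eqref{eq:41} and \eqref{eq:40a}; $\hat{y}_{l(a)f}=1$ forces $t_{faik}=W_{ik}$ via \eqref{eq:40} and \eqref{eq:34}), so that every point of $\mathcal{X}^{SP}$ satisfies $t_{faik}=\hat{y}_{l(a)f}W_{ik}$ and the coupling row \eqref{eq:33} collapses to \eqref{eq:prop1}. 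This is the same underlying mathematics, but your version is self-contained and makes transparent precisely why the McCormick rows are exact when the design variables are binary; the paper's appeal to a general elimination procedure buys brevity at the cost of hiding that mechanism.

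The more substantive difference is that you flag the requirement $0 \le W_{ik} \le \overline{W}_{ik}$ needed for the lift to be feasible, which the paper's proof silently assumes when it asserts that $t_{faik} = \hat{y}_{l(a)f}W_{ik}$ ``can be expressed as'' \eqref{eq:40}--\eqref{eq:34} and \eqref{eq:40a}. Your handling of the lower bound is correct: at any waiting node with an attractive outgoing option, $0 \le v_{ak} \le f_a W_{ik}$ forces $W_{ik}\ge 0$. But your proposed fix for the upper bound cannot fully close the gap, and it is worth seeing why: $\mathcal{X}^{MA}$ as defined places no upper bound on $W_{ik}$ (enlarging $W_{ik}$ only relaxes \eqref{eq:prop1}--\eqref{eq:prop2}), whereas every point of $proj_{v,W}\,\mathcal{X}^{SP}$ satisfies $W_{ik}\le\overline{W}_{ik}$, as your own case analysis shows. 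Hence no finite choice of $\overline{W}_{ik}$, including $1/f_{\min}$, makes the two sets literally equal; what is true is $proj_{v,W}\,\mathcal{X}^{SP} = \mathcal{X}^{MA}\cap\{(v,W): W_{ik}\le\overline{W}_{ik} \ \forall i,k\}$. Since the objective minimizes the sum of the $W_{ik}$, the excised points are suboptimal, so the subproblem and the assignment program still have the same optimal solutions whenever $\overline{W}_{ik}$ is chosen large enough (e.g., from solving \eqref{eq:ma}, as the paper itself later recommends). This defect is in the proposition as stated and is shared by the paper's own proof; your write-up has the merit of making it visible, and it would be complete if you restated the claim as the intersection identity above rather than trying to choose $\overline{W}_{ik}$ to preserve exact equality.
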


	\begin{proof}
		See Appendix \ref{app:1}.	
	\end{proof}
	Proposition \ref{prop:4} shows that one can use the efficient \citet{Spiess1989}'s primal-dual algorithm designed for the transit assignment problem to solve the current Benders subproblem.  To speed up the process of Benders decomposition by avoiding feasibility cuts, we need to put some restrictions so that the subproblem \eqref{eq:28} is always feasible. 
	
	\begin{prop}\label{prop:5}
		Given that $0 \notin \Omega$ and  $G_R(N_R, A_R)$ is connected, then $\mathcal{X}^{SP}$ is non-empty for any given feasible value of $\hat{x}, \hat{y}, \hat{\mathcal{N}}$. 
	\end{prop}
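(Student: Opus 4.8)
The plan is to reduce the claim to the feasibility of the multimodal assignment polyhedron and then exhibit an explicit feasible flow. By Proposition~\ref{prop:4} we have $proj_{v,W}\,\mathcal{X}^{SP} = \mathcal{X}^{MA}$, evaluated at the frequencies and fleet sizes encoded by $\hat{y}$ and $\hat{\mathcal{N}}$. Since the projection of a nonempty set is nonempty and, conversely, every point of $\mathcal{X}^{MA}$ is the image of some point of $\mathcal{X}^{SP}$, it follows that $\mathcal{X}^{SP} \ne \emptyset$ if and only if $\mathcal{X}^{MA} \ne \emptyset$. It therefore suffices to construct a single pair $(v,W)$ satisfying the flow-conservation constraint~\eqref{eq:cons}, the proportionality constraints~\eqref{eq:prop1}--\eqref{eq:prop2}, and nonnegativity~\eqref{eq:32}, for any fixed design.

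First I would route the entire demand through the road/MoD subnetwork only, ignoring transit altogether; this is the natural choice because the two hypotheses concern $G_R$ and the fleet sizes rather than the transit frequencies $\hat{y}$, which may even be identically zero. For each pair $(o,d)$ I send $d_{od}$ units along a walk consisting of an access link from $o$ to a road node, a path inside $G_R$ from that node to a road node adjacent to $d$, and an egress link into $d$. Such a walk always exists: the interior road portion can be completed because $G_R$ is connected, and the endpoints are attached to the road network by access and egress links. Aggregating these path flows by destination yields $v_{ak}$ satisfying \eqref{eq:cons} and \eqref{eq:32}.

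It remains to choose the wait variables so that the proportionality constraints hold. Constraint~\eqref{eq:prop1} is met trivially since no flow is placed on transit links. For~\eqref{eq:prop2}, at every waiting node $i \in N^w$ carrying MoD flow I set $W_{ik}$ equal to $v_{ak}/(\mathcal{A}_{Z(i)}\mathcal{V}_{Z(i)})$, or larger; this quantity is finite and well defined precisely because $0 \notin \Omega$ forces $\mathcal{V}_{Z(i)} \ge 0.01 > 0$ in every zone, so the effective MoD frequency $\mathcal{A}_{Z(i)}\mathcal{V}_{Z(i)}$ is strictly positive. With these choices each inequality in~\eqref{eq:prop2} holds and the constructed $(v,W)$ lies in $\mathcal{X}^{MA}$.

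The crux I expect is verifying that the two hypotheses are exactly what make the all-MoD routing go through: connectivity of $G_R$ guarantees the interior road path, while $0 \notin \Omega$ guarantees that any boarding node has positive MoD capacity, so a binding instance of~\eqref{eq:prop2} can be absorbed by enlarging $W_{ik}$ rather than by an infeasible demand on the fleet. The one modeling fact I would flag and rely on is that every origin centroid has at least one outgoing access link and every destination centroid at least one incoming egress link (i.e.\ a road node lies within the walking radius $\zeta$); this is a standing assumption on how the network is built, and without it even a fully provisioned fleet could not reach an isolated zone. Were $0$ admitted into $\Omega$, a zone could receive no vehicles and, lacking transit, its demand could not be routed --- which is precisely why the dummy value $0.01$ is introduced.
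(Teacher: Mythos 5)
Your proof is correct, and at its core it runs on the same two facts as the paper's own argument: the demand vector $g_{ik}$ is balanced by construction, and $0 \notin \Omega$ together with connectivity of $G_R$ means every origin--destination pair can be served purely over the road/MoD network. The difference is in execution rather than substance. The paper's proof is a terse dichotomy: it asserts, without justification, that $\mathcal{X}^{SP}$ can be empty only if demand is unbalanced or some node lacks a directed path to a destination, and then rules out both cases. You instead exhibit an explicit point of $\mathcal{X}^{MA}$ --- all-road path flows, with $W_{ik}$ chosen large enough, which is well defined precisely because $\mathcal{A}_{Z(i)}\mathcal{V}_{Z(i)} \ge 0.01\,\mathcal{A}_{Z(i)} > 0$ in every zone --- and then lift it into $\mathcal{X}^{SP}$ via Proposition~\ref{prop:4}. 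This buys rigor on two fronts: your construction proves exactly the ``only two failure modes'' claim that the paper leaves unproven, and routing through Proposition~\ref{prop:4} disposes of the auxiliary variables $(t,\omega)$ cleanly instead of ignoring them. You are also right to flag the standing modeling assumption that every centroid has an access and an egress link to the road network; the paper's proof uses this silently when it claims a path exists from any node to any destination. One caveat that both arguments share: lifting a point into $\mathcal{X}^{SP}$ forces $W_{ik} \le \overline{W}_{ik}$ (this follows from \eqref{eq:40}--\eqref{eq:34} and their MoD counterparts), so Proposition~\ref{prop:5} --- and Proposition~\ref{prop:4} as literally stated --- implicitly require the big-M values to be genuine upper bounds on the wait times; the paper concedes this point in its discussion of choosing $\overline{W}_{ik}$, and your ``or larger'' choice of $W_{ik}$ inherits the same requirement through your use of Proposition~\ref{prop:4}.
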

	
	\begin{proof}
			See Appendix \ref{app:1}.
	\end{proof}

	Proposition \ref{prop:5} makes the Benders subproblem feasible for any feasible value of $\hat{x}, \hat{y}, \hat{\mathcal{N}}$. This is an important result to make the Benders decomposition implementation faster.\\

	Let $\{\mu_{ik}\}$, $\{\lambda_{aik}^1\}$, $\{\lambda_{faik}^2\}$, $\{\lambda_{faik}^3\}$, $\{\lambda_{faik}^4\}$, $\{\lambda_{aik}^5\}$, $\{\lambda_{nik}^6\}$, $\{\lambda_{nik}^7\}$, and $\{\lambda_{nik}^8\}$ be the dual variables associated with the constraints \eqref{eq:29} -\eqref{eq:37} respectively. Then, the dual of the subproblem DSP can be stated as below:
	
	\begin{subequations}\label{eq:42}	
		\begin{align}
		&  z^{DSP} (\hat{x}, \hat{y}, \mathcal{\hat{N}}) =\underset{\mu, \lambda}{\text{max}}
		& & \sum_{k \in D} \left[\sum_{i \in N} \mu_{ik} g_{ik} + \sum_{i \in N^w} \sum_{a \in FS(i): a \in A_T} \sum_{f\in \Theta} \left(\overline{W}_{ik} (1-\hat{y}_{l(a)f})\lambda^2_{faik} \right.  \right. \nonumber\\
		& & & \left. \left. +   \overline{W}_{ik} \hat{y}_{l(a)f} \lambda^3_{faik} \right)\right) + \sum_{i \in N^w \cap N_R} \sum_{n \in \Omega} \left(\overline{W}_{ik}(1-\hat{\mathcal{N}}_{Z(i)n})\lambda_{nik}^6 \right. \nonumber\\
		& & & \left. \left. + \overline{W}_{ik} \hat{\mathcal{N}}_{Z(i)n} \lambda^7_{nik} \right) \right] \\
		& \text{subject to}
		& & \mu_{ik} - \mu_{jk} +  \lambda^1_{aik} + \lambda^5_{aik} \le c_{a}, \forall a = (i, j) \in A,  \forall k \in D \label{eq:43}  \\
		& & & -\sum_{f \in \theta} \sum_{\underset{a \in A_T} {a \in FS(i):}} \left(\lambda^2_{faik} + \lambda^4_{faik} \right) + \sum_{n \in \Omega} \left( \lambda^6_{nik} + \lambda^8_{nik} \right)  = 1, {\footnotesize   \forall , \forall i \in N^w , \forall k \in D }\\
		& & &  -f\lambda^1_{aik} - \lambda^2_{faik} + \lambda^3_{faik} - \lambda^4_{faik} \le 0, \forall f \in \Theta, \forall a \in FS(i): a \in A_T, \forall i \in N^w, \forall k \in D  \\
		& & &  -n\lambda^5_{aik} - \lambda^6_{nik} + \lambda^7_{nik} - \lambda^8_{nik} \le 0, \forall n \in \Omega, \forall i \in N^w \cap N_R, \forall k \in D  \\
		& & & \lambda^1_{aik}, \lambda^5_{aik} \le 0, \forall a \in FS(i), \forall i \in N^w, \forall k \in D\\
		& & & \lambda^2_{faik} , \lambda^3_{faik} \le 0, \lambda^4_{faik} \ge 0, \forall f \in \Theta, \forall a \in FS(i): a \in A_T, \forall i \in N^w, \forall k \in D\\
		& & & \lambda^6_{nik}, \lambda^7_{nik} \le 0,  \lambda^8_{nik} \ge 0, \forall n \in \Omega, \forall i \in N^w \cap N_R, \forall k \in D \label{eq:44}
		\end{align}
	\end{subequations}
	
	Let us denote the feasible region of DSP as $\Pi  = \{(\mathbf{\mu}, \mathbf{\lambda}^1, \mathbf{\lambda}^2, \mathbf{\lambda}^3, \mathbf{\lambda}^4, \mathbf{\lambda}^5, \mathbf{\lambda}^6, \mathbf{\lambda}^7, \mathbf{\lambda}^8): \eqref{eq:43}-\eqref{eq:44} \}$. Note that $\Pi$ does not depend on the value of $x, y, \mathcal{N}$. From Proposition \ref{prop:5}, we know that SP is always feasible for any given feasible value of $(\hat{x}, \hat{y}, \hat{\mathcal{N}})$, then by linear programming duality, DSP should be bounded. The implication is that the polyhedron describing $\Pi$ is bounded and can be described as the convex hull of a set of extreme points only (from Minkowski-Weyl's theorem on characterization of polyhedra (\citet[Chapter~3]{conforti2014integer})). Let $\{(\mathbf{\mu}^\pi,  (\mathbf{\lambda}^1)^\pi, (\mathbf{\lambda}^2)^\pi, (\mathbf{\lambda}^3)^\pi,(\mathbf{\lambda}^4)^\pi, (\mathbf{\lambda}^5)^\pi, (\mathbf{\lambda}^6)^\pi, (\mathbf{\lambda}^7)^\pi, (\mathbf{\lambda}^8)^\pi)\}_{\pi \in \mathcal{K}}$ be the set of extreme points of polytope  $\Pi$, where $\mathcal{K}$ represents the set of indices of extreme points. By applying an outer linearization procedure to the inner (sub) problem of the original problem, we can restate it as \eqref{eq:50}, which is referred to as the \textit{Benders Master problem} (MP). 
	
	\begin{theorem} (\citet{Benders1962})
		The problem \eqref{eq:11} can be reformulated as below:
			\begin{subequations}\label{eq:50}
			\begin{align}
			& \underset{x, y,\mathcal{N}, \eta}{\text{minimize}}
			& & \eta \\
			& \text{subject to}
			& & \sum_{l \in L} \sum_{f \in \Theta} \mathcal{B}(l, f) \times y_{lf}  \le \bar{B}  \label{eq:50a} \\
			& & & \sum_{f \in \Theta} y_{lf} = x_l, \forall l \in L \\
			& & & \sum_{n \in \Omega}\mathcal{N}_{zn} = 1, \forall z \in Z \\
			& & & \sum_{z \in Z}\left(\sum_{n \in \Omega}  n\mathcal{N}_{zn}\right) \le \bar{F} \label{eq:50b}\\
			& & & \eta \ge  \sum_{k \in D} \left[\sum_{i \in N} (\mu_{ik})^\pi g_{ik} + \sum_{i \in N^w} \sum_{a \in FS(i): a \in A_T} \sum_{f\in \Theta} \left(\overline{W}_{ik} (1-\hat{y}_{l(a)f})(\lambda^2_{faik})^\pi \right.  \right. \nonumber \\
			& & & \left. \left. +   \overline{W}_{ik} \hat{y}_{l(a)f} (\lambda^3_{faik})^\pi \right)\right) + \sum_{i \in N^w \cap N_R} \sum_{n \in \Omega} \left(\overline{W}_{ik}(1-\hat{\mathcal{N}}_{Z(i)n})(\lambda_{nik}^6)^\pi \right. \nonumber\\
			& & & \left. \left. + \overline{W}_{ik} \hat{\mathcal{N}}_{Z(i)n} (\lambda^7_{nik})^\pi \right) \right], \forall \pi \in \mathcal{K} \label{eq:50e}\\
			& & &  x_l \in \mathfrak{B}, \forall l \in L \label{eq:50c}\\
			& & &  y_{lf} \in \mathfrak{B}, \forall f \in \Theta,  \forall l \in L \\
			& & & \mathcal{N}_{in} \in \mathfrak{B}, \forall n \in \Omega, i \in Z \label{eq:50d}
			\end{align}
		\end{subequations}
	\end{theorem}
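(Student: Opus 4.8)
The plan is to carry out the standard Benders projection argument, leaning on the two facts already established: that the recourse subproblem is feasible for every admissible design, and that the dual feasible region $\Pi$ is a bounded polytope independent of the design variables. First I would regroup the variables of the McCormick-linearized version of \eqref{eq:11} into a two-stage minimization: an outer minimization over the integer design variables $(x, y, \mathcal{N})$ subject to the purely combinatorial constraints \eqref{eq:50a}--\eqref{eq:50b} together with \eqref{eq:50c}--\eqref{eq:50d}, and an inner minimization which, once the design is fixed at some feasible $(\hat{x}, \hat{y}, \hat{\mathcal{N}})$, is exactly the Benders subproblem \eqref{eq:28}. This step is only a partition of the constraints: each inequality of the linearized model either involves solely the design variables (and is assigned to the outer problem) or turns into one of \eqref{eq:29}--\eqref{eq:30} after substituting the fixed design into the McCormick right-hand sides.

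Next I would invoke linear-programming strong duality on the inner problem. By Proposition \ref{prop:5} the subproblem \eqref{eq:28} is feasible for every $(\hat{x}, \hat{y}, \hat{\mathcal{N}})$ admissible to the outer constraints, and its objective is bounded below, since $c_a \ge 0$ and the constraints \eqref{eq:34} and \eqref{eq:37} force $W_{ik} \ge 0$. Hence the primal optimum is finite and strong duality gives $z^{SP}(\hat{x}, \hat{y}, \hat{\mathcal{N}}) = z^{DSP}(\hat{x}, \hat{y}, \hat{\mathcal{N}})$, with the dual given by \eqref{eq:42}. The structural point I would stress is that $(\hat{x}, \hat{y}, \hat{\mathcal{N}})$ enters DSP only through the objective coefficients and never through the constraint set, so $\Pi$ is fixed once and for all.

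Then I would use the representation of $\Pi$ as the convex hull of its finitely many extreme points (Minkowski--Weyl, \citet[Chapter~3]{conforti2014integer}), which is legitimate precisely because primal feasibility forces the dual to be bounded, so $\Pi$ has no extreme rays and is a polytope. Since maximizing a linear function over a polytope attains its value at a vertex, $z^{DSP}(\hat{x}, \hat{y}, \hat{\mathcal{N}})$ equals the maximum over $\pi \in \mathcal{K}$ of the dual objective evaluated at extreme point $\pi$, which is an affine function of $(\hat{y}, \hat{\mathcal{N}})$. Substituting this back into the outer minimization recasts the whole problem as minimizing, over the design variables, a pointwise maximum of finitely many affine functions. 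I would then linearize this minimization of a maximum by introducing the epigraph variable $\eta$: requiring $\eta$ to dominate each affine function produces exactly the cut family \eqref{eq:50e}, one per $\pi \in \mathcal{K}$, while the design constraints carry over unchanged, giving \eqref{eq:50}.

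The main obstacle is not any single inequality but making rigorous that no feasibility (extreme-ray) cuts are required. This is where Proposition \ref{prop:5} does the real work: it is what guarantees that $\Pi$ is bounded, so the extreme-point expansion of $z^{DSP}$ is complete and the optimality cuts \eqref{eq:50e} alone suffice. Were the subproblem allowed to be infeasible for some design, DSP could be unbounded along a recession direction of $\Pi$, and one would have to append feasibility cuts drawn from the extreme rays of the recession cone; the reformulation \eqref{eq:50} would then be incorrect as stated. Verifying that the affine coefficients matched between the dual objective \eqref{eq:42} and the cut \eqref{eq:50e} is routine once this boundedness is in hand.
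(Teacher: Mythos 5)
Your proposal is correct and follows essentially the same route as the paper: the paper's formal proof is simply a citation to \citet{Benders1962}, but the text immediately preceding the theorem sketches exactly your argument --- fix the design, apply LP strong duality to the subproblem \eqref{eq:28}, use Proposition \ref{prop:5} to rule out feasibility (extreme-ray) cuts, invoke Minkowski--Weyl to write the dual region $\Pi$ via its extreme points, and outer-linearize the resulting pointwise maximum with the epigraph variable $\eta$. Note only that both you and the paper glide over the same subtlety: subproblem feasibility for every design guarantees a finite dual optimum for every design, which is what the cut family \eqref{eq:50e} needs, but it does not by itself make the set $\Pi$ bounded (e.g., the free duals $\mu_{ik}$ admit constant shifts per destination that leave the objective unchanged since $\sum_{i} g_{ik} = 0$).
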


	\begin{proof}
		See \citet{Benders1962}.
	\end{proof}

	\subsection{Classic Benders decomposition implementation}\label{sec:bdAlg}
	The issue with the Benders reformulation is that there could be a large number of extreme points of the polyhedron associated with the feasible region of DSP, therefore, one applies an iterative process of solving two problems, namely, the relaxed master problem (RMP) and the subproblem (SP) repeatedly. The relaxed master problem is the master problem with constraints \eqref{eq:50e} being defined only for a subset of extreme points, i.e., $\mathcal{K}^{'} \subset \mathcal{K} $. The overall implementation of the classic Benders Decomposition is summarized in Algorithm \ref{alg:cb}. We start by finding the feasible value of $(x^0,y^0, \mathcal{N}^0)$. This can be done by solving \eqref{eq:50} without \eqref{eq:50e} and including a constraint $\eta \ge 0$. Then, in each iteration $t$, the algorithm solves RMP with the given set of extreme points and then SP with the current value of $(x^t, y^t, \mathcal{N}^t)$. Since RMP is relaxation and SP is solved for a feasible value $(x^t, y^t, \mathcal{N}^t)$, they provide a lower bound and upper bound respectively to the original problem. The subproblem also provides inequalities (optimality cuts) to strengthen the formulation of RMP in each iteration. Thus, it is guaranteed to have non-decreasing lower bounds. In our case, there are no feasibility cuts since our subproblem is always feasible (Proposition \ref{prop:5}). The algorithm terminates when both the upper bound and lower bound are close to each other.

	\begin{algorithm}[H]
		\caption{Classic Benders decomposition implementation}\label{alg:cb}
		\begin{algorithmic}[1]
			\State (\textit{Initialize}) Let $t = 0, UB = -\infty, LB = \infty, \mathcal{K}^{'} = \phi$. Assume an initial feasible value $(x^0, y^0, \mathcal{N}^0)$. Solve the SP \eqref{eq:28}, obtain the optimal dual solution and append that to set $\mathcal{K}^{'}$. 
			\While{$UB - LB > \epsilon$} \Comment{$\epsilon$ is the tolerance parameter}
			\State Set $t= t +1$. Solve RMP \eqref{eq:50} and obtain its optimal solution $(x^t, y^t, \mathcal{N}^t)$. 
			\State Set LB = $\eta$
			\State Solve SP \eqref{eq:28} for $(x^t, y^t, \mathcal{N}^t)$, obtain dual solutions and append that to $\mathcal{K}^{'}$. 
			\State Set $UB = \sum_{k \in D} \left(\sum_{a \in A} c_{a} v_{ak}^t + \sum_{i \in N^w} W_{ik}^t \right)$
			\EndWhile
		\end{algorithmic}
	\end{algorithm}

	\subsection{Enhanced Benders decomposition implementation}\label{sec:bdAlgEn}
	The classic Benders decomposition may take prohibitive computational effort to converge, thus making it difficult to solve the problem for large instances. The slow convergence can be attributed to the low strength of the optimality cuts, degeneracy in the subproblem, no guarantee of non-decreasing upper bounds in each iteration, or not formulating the problem "properly" (\citet{Saharidis2010a,Tang2013, Magnati1984}). To accelerate the Benders decomposition algorithm, we make use of several enhancements that are described below:

	\subsubsection{Use of multiple cuts via disaggregated cuts} \label{sec:dis}
	For this design problem, we can further utilize the decomposable structure of the Benders subproblem \eqref{eq:11} as it is decomposable for each destination $k \in D$. That is, we can solve several (smaller) subproblems and generate multiple optimality cuts for the master problem. The disaggregated cuts have a higher probability of finding facet-defining inequalities characterizing $\Pi$. For this purpose, we modify RMP to allow for the disaggregated cuts as \eqref{eq:51}:

		\begin{subequations}\label{eq:51}
			\begin{align}
			& \underset{x, y,\mathcal{N}, \eta}{\text{minimize}}
			& & \sum_{k \in D}\eta_k \\
			& \text{subject to}
			& & \eqref{eq:50a} -\eqref{eq:50b} \\
			& & & \eta_k \ge  \left[\sum_{i \in N} (\mu_{ik})^\pi g_{ik} + \sum_{i \in N^w} \sum_{a \in FS(i): a \in A_T} \sum_{f\in \Theta} \left(\overline{W}_{ik} (1-\hat{y}_{l(a)f})(\lambda^2_{faik})^\pi \right.  \right. \nonumber \\
			& & & \left. \left. +   \overline{W}_{ik} \hat{y}_{l(a)f} (\lambda^3_{faik})^\pi \right)\right) + \sum_{i \in N^w \cap N_R} \sum_{n \in \Omega} \left(\overline{W}_{ik}(1-\hat{\mathcal{N}}_{Z(i)n})(\lambda_{nik}^6)^\pi \right. \nonumber\\
			& & & \left. \left. + \overline{W}_{ik} \hat{\mathcal{N}}_{Z(i)n} (\lambda^7_{nik})^\pi \right) \right], \forall \pi \in \mathcal{K}_k, \forall k \in D \label{eq:51d} \\
			& & &  \eqref{eq:50c} -\eqref{eq:50d} 
			\end{align}
		\end{subequations}
	
	Note that by adding the disaggregated cuts for every destination, we can get back the optimality cuts defined in the classic Benders relaxed master problem. 
	
	\subsubsection{Use of multiple cuts via multiple solutions} \label{sec:mult}
	To further improve the convergence of the algorithm, \citet{BeheshtiAsl2019} used a strategy known as multiple cuts via multiple solutions. When solving the RMP, any commercial solver such as AIMMS or GUROBI can be asked to generate multiple solutions of an integer program (optimal as well as suboptimal) by using \texttt{pool solution option}. These multiple solutions can be used to generate multiple classic \eqref{eq:50e} or disaggregated cuts \eqref{eq:51d} to be added in next iteration of RMP. This strategy is expected to decrease the overall iterations and possibly the solution time of the algorithm.

	\subsubsection{Use of clique/cover cuts} \label{sec:cc}
	Due to the limited availability of bus and vehicle fleet, one can use the clique/cover cuts to tighten the feasible region of the master problem.
	
	\begin{prop}\label{prop:6}
		For every $n \in \Omega$, if $\lfloor \frac{\bar{F}}{n} \rfloor < \vert Z \vert$ then the clique inequality $\sum_{z \in Z} \mathcal{N}_{zn} \le \lfloor \frac{\bar{F}}{n} \rfloor$ is valid for \eqref{eq:11}. 
	\end{prop}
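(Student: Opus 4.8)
The plan is to prove validity directly from the fleet-budget constraint \eqref{eq:16} together with the integrality of the assignment variables, and then to explain the clique interpretation that motivates the side hypothesis $\lfloor \bar{F}/n \rfloor < \vert Z \vert$. First I would fix $n \in \Omega$ and take an arbitrary integer-feasible point of \eqref{eq:11}. Since every element of $\Omega$ is strictly positive (recall $0 \notin \Omega$) and $\mathcal{N}_{zn'} \ge 0$ for all $n' \in \Omega$, the single term $n\mathcal{N}_{zn}$ is dominated by the full inner sum, i.e. $n\mathcal{N}_{zn} \le \sum_{n' \in \Omega} n'\mathcal{N}_{zn'}$ for each zone $z \in Z$. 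Summing over $z$ and applying \eqref{eq:16} gives $n \sum_{z \in Z} \mathcal{N}_{zn} \le \sum_{z \in Z}\sum_{n' \in \Omega} n'\mathcal{N}_{zn'} \le \bar{F}$, and hence $\sum_{z \in Z} \mathcal{N}_{zn} \le \bar{F}/n$.

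Next I would sharpen this continuous bound using integrality. Because each $\mathcal{N}_{zn} \in \mathfrak{B}$, the quantity $\sum_{z \in Z}\mathcal{N}_{zn}$ is a non-negative integer, so the bound $\bar{F}/n$ may be replaced by its floor, yielding $\sum_{z \in Z}\mathcal{N}_{zn} \le \lfloor \bar{F}/n \rfloor$. This holds at every integer-feasible solution, which is exactly the validity claimed.

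Finally I would address the role of the hypothesis and the clique reading. The constraint \eqref{eq:21} together with $\mathcal{N}_{zn} \le 1$ already forces the trivial bound $\sum_{z \in Z}\mathcal{N}_{zn} \le \vert Z \vert$, so the derived inequality genuinely tightens the relaxation only when $\lfloor \bar{F}/n \rfloor < \vert Z \vert$; under this condition any collection of $\lfloor \bar{F}/n\rfloor + 1$ of the variables $\{\mathcal{N}_{zn}\}_{z \in Z}$ cannot simultaneously equal one, since their combined vehicle count would be $(\lfloor \bar{F}/n \rfloor + 1)\,n > \bar{F}$, violating \eqref{eq:16}. This is precisely the conflict structure that makes the inequality a bona fide clique/cover cut rather than a redundant one. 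I do not expect a substantive obstacle here: the only points requiring care are the integer-rounding step and the clean justification of the side condition, namely that it guarantees the cut is not already implied by the partitioning constraints \eqref{eq:21}.
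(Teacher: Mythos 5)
Your proof is correct and follows essentially the same argument as the paper's (one-line) proof: the fleet budget \eqref{eq:16} caps the number of zones that can receive $n$ vehicles at $\lfloor \bar{F}/n \rfloor$, with integrality of $\sum_{z \in Z}\mathcal{N}_{zn}$ justifying the floor. Your added observation that the hypothesis $\lfloor \bar{F}/n \rfloor < \vert Z \vert$ only governs non-redundancy of the cut likewise matches the remark the paper makes immediately after the proposition.
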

	
	\begin{proof}
			See Appendix \ref{app:1}. 
	\end{proof}

	If for any $n \in \Omega$, we have $\lfloor \frac{\bar{F}}{n} \rfloor > \vert Z \vert$, then the inequality $\sum_{z \in Z} \mathcal{N}_{zn} \le \lfloor \frac{\bar{F}}{n} \rfloor$ will be redundant and therefore, we do not add it to the model. \\

	The inequality which constrain the number of buses \eqref{eq:busesAv} is a Knapsack constraint. A set $C \subseteq L \times \Theta$ is a \textit{cover} for inequality \eqref{eq:busesAv} if $\sum_{(l, f) \in C} \mathcal{B}(l, f) > \bar{B} $ and it is \textit{minimal cover} if $\sum_{(l, f) \in C \backslash \{(l^{'}, f^{'})\}} \mathcal{B}(l, f) \le \bar{B}, \text{ for all } (l^{'}, f^{'}) \in C$

	\begin{prop}\label{prop:7}
		For any minimal cover $C\subseteq L \times \Theta$, the inequality $\sum_{(l, f) \in C} y_{lf} \le \vert C \vert -1$ is valid for \eqref{eq:11}.
	\end{prop}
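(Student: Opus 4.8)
The plan is to prove validity by a direct contradiction argument that exploits the binary nature of the $y_{lf}$ variables together with the defining property of a cover. First I would observe that for validity it in fact suffices that $C$ is a cover: the minimality hypothesis is only needed to make the inequality \emph{stronger} (e.g. facet-defining for the knapsack polytope), not to make it valid. So I would work purely from the cover condition $\sum_{(l,f)\in C}\mathcal{B}(l,f) > \bar{B}$ and the integrality constraint \eqref{eq:19}.

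Next, suppose toward a contradiction that some feasible point of \eqref{eq:11} violates the proposed inequality, i.e. $\sum_{(l,f)\in C} y_{lf} \ge \vert C\vert$. Since each $y_{lf}\in\mathfrak{B}$ by \eqref{eq:19}, every summand is at most $1$, and the sum contains exactly $\vert C\vert$ terms; hence the sum can attain $\vert C\vert$ only if $y_{lf}=1$ for every $(l,f)\in C$. This is the single place where integrality is essential.

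With all cover variables pinned to $1$, I would chain the knapsack row \eqref{eq:busesAv}. Because $\mathcal{B}(l,f)\ge 0$ (as $\mathcal{B}:L\times\Theta\mapsto\mathbb{N}$) and $y_{lf}\ge 0$ for all indices, discarding the terms outside $C$ only lowers the left-hand side, yielding
$$\bar{B} \ge \sum_{l\in L}\sum_{f\in\Theta}\mathcal{B}(l,f)\,y_{lf} \ge \sum_{(l,f)\in C}\mathcal{B}(l,f)\,y_{lf} = \sum_{(l,f)\in C}\mathcal{B}(l,f) > \bar{B},$$
where the terminal strict inequality is exactly the cover condition. The resulting contradiction $\bar{B}>\bar{B}$ shows no feasible point can violate the inequality, so $\sum_{(l,f)\in C} y_{lf}\le \vert C\vert-1$ is valid.

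There is no genuine obstacle here: this is the classical minimal-cover cut for a single knapsack constraint, and the argument is essentially three lines. The only points needing (trivial) care are confirming nonnegativity of $\mathcal{B}$ and the $y$ variables so that dropping non-cover terms is legitimate, and emphasizing that integrality of $y$ is indispensable — over the LP relaxation a fractional point with $y_{lf}=1-\varepsilon$ on $C$ can satisfy \eqref{eq:busesAv} while violating the cover inequality, which is precisely why adding such cuts tightens the relaxed master problem.
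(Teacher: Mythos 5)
Your proof is correct and is essentially the same argument the paper gives, only spelled out: the paper's one-line proof appeals to exactly the fact you formalize, namely that the cover condition $\sum_{(l,f)\in C}\mathcal{B}(l,f)>\bar{B}$ together with the knapsack row \eqref{eq:busesAv} forbids all $y_{lf}$, $(l,f)\in C$, from equaling one simultaneously. Your added observation that only the cover property (not minimality) is needed for validity is also correct; minimality serves only to make the cut non-redundant and stronger.
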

	
	\begin{proof}
			See Appendix \ref{app:1}. 
	\end{proof}

	To generate some of the minimal cover cuts, one can use the heuristic given in Algorithm \ref{alg:cc}. In this algorithm, for each frequency $f \in \Theta$, we keep the list of lines $G$ for which the number of buses required to provide the frequency $f$ does not exceed $\bar{B}$. Then, any line which is not in $G$, along with $G$ forms a minimal cover. 
	
	\begin{algorithm}[H]
		\caption{Cover cut generation heuristic}\label{alg:cc}
		\begin{algorithmic}[1]
		\Procedure{}{}
			\State Compute the value of mapping $\mathcal{B}(l, f)$ for all $(l, f) \in L \times \Theta$.
			\State  $CC \gets \phi$
			\For{$f \in \Theta$}
			\State $G \gets []; temp \gets 0$
					\For{$l \in L: \mathcal{B}(l, f)$ in an ascending order}
					\State $temp = temp+ \mathcal{B}(l, f)$
						\If{$temp \le \bar{B}$}
							\State append $(l, f)$ to $G$
						\Else
							\State \textbf{break}
						\EndIf	
									
					\EndFor
					\For{$l \in L \backslash G$} 
					\State $C \gets G \cup \{(l, f)\}$
					\State append $C$ to $CC$
					\EndFor
			\EndFor
		\EndProcedure
		\Return{$CC$}
	\end{algorithmic}
	\end{algorithm}

	Furthermore, one can use other efficient techniques to produce maximal clique or minimal cover cuts for the problem.

	\subsubsection{Other recommendations}
	One of the problems with the Benders subproblem \eqref{eq:28} is that it assumes the value of $\overline{W}_{ik}$ as a given upper bound. The value of $\overline{W}_{ik}$ is a big-M introduced to relax the non-linearity in the original model. If the value of the big-M is not chosen properly, then one can face serious issues with the convergence of the algorithm. For example, choosing $\overline{W}_{ik} < W_{ik}$ can make the subproblem infeasible, and choosing $\overline{W}_{ik}$ too high would generate weak optimality cuts, which would increase the computational time of the algorithm. One way to avoid this issue is to solve the assignment problem \eqref{eq:ma} for given design variables $(x, y, \mathcal{N})$ and compute the optimal value of $W_{ik}$ and use that as an upper bound.  Further improvements in the Benders decomposition method can involve the use of \textit{pareto-optimal} cuts proposed by \citet{Magnati1984}. They help in avoiding the generation of multiple optimality cuts for a degenerate subproblem. We tried this strategy, however, we did not find any significant improvement in the solution time using these cuts, therefore, we do not discuss it here. Finally, when RMP is loaded with a large number of cuts we recommend removing the non-active cuts from the model by checking the dual value. There is no guarantee that they will not be generated again, but it will be faster to solve the RMP. The overall steps of the Benders implementation with possible acceleration techniques are summarized in Algorithm \ref{alg:acc}. 
	
	\begin{algorithm}[H]
		\caption{Enhanced Benders decomposition implementation}\label{alg:acc}
		\begin{algorithmic}[1]
			\State (\textit{Initialize}) Let $t = 0, UB = -\infty, LB = \infty, \mathcal{K}^{'}_k = \phi, \forall k \in D$.
			\State Prepare the master problem with clique and cover inequalities.		
			\State Assume an initial feasible value $(x^0, y^0, \mathcal{N}^0)$. Solve the SP \eqref{eq:28}, obtain the optimal dual solutions and append that to the set $\mathcal{K}^{'}_k, \forall k \in D$. 
			\While{$UB - LB > \epsilon$} \Comment{$\epsilon$ is the tolerance parameter}
			\State Set $t= t +1$. Solve RMP \eqref{eq:50}, obtain its optimal solution $(x^t_0, y^t_0, \mathcal{N}^t_0)$ and other optimal/suboptimal solutions $\{(x^t_s, y^t_s, \mathcal{N}^t_s)\}_{1 \le s \le l}$, where $l$ is specified by the user. 
			\State Set LB = $\sum_{k \in D}\eta_k$
			 	\For{$s = 0, 1, ..., l$}
			 		\State Solve SP \eqref{eq:28} for $(x^t_s, y^t_s, \mathcal{N}^t_s)$, obtain dual solution and append that to $\mathcal{K}^{'}_k, \forall k \in D$.
			 	\EndFor
		 	\State Set $UB = \sum_{k \in D} \left(\sum_{a \in A} c_{a} v_{ak}^t + \sum_{i \in N^w} W_{ik}^t \right)$
			\EndWhile
		\end{algorithmic}
	\end{algorithm}

	\section{Computational results}\label{sec:6}
	In this section, we present the computational study based on the model \eqref{eq:11}, \eqref{eq:50}, and acceleration techniques presented in \cref{sec:bdAlgEn}. We start by describing the details of the experiment used to show the application of the proposed method. Then, we present the details of the network design results in \cref{sec:netd}, which is followed by the comparison of the computational performance of the solver, the classic Benders implementation, and the enhanced Benders techniques described in \cref{sec:comp}. Finally, we discuss the results of the sensitivity analysis on two important parameters in the model, namely, the available fleet of buses $\bar{B}$ and vehicles $\bar{F}$ in \cref{sec:sens}, which is followed by the comparison of the performance of optimized existing transit system and proposed integrated system in \cref{sec:comp2}.

	\subsection{Experiment details}\label{sec:ins}
	The computational experiments are based on the Sioux Falls road and transit network. The road network has 24 nodes, whereas the static transit network has 384 stops. A walking distance of 0.5 miles is used to create walking links. An illustration of the two networks is shown in Figure \ref{fig:sfn} and the number of different types of links in the network is given in Table \ref{tab:net}. There are 12 candidate transit routes in the transit network. We consider the set of possible frequencies as $\Theta = \{2, 3, 4, 6, 12\}$  buses/hr to be assigned to any candidate transit route and possible vehicle fleet size to be assigned to any zone as $\Omega = \{0.01, 50, 100, 200, 500 \}$. A time-based fare of \$0.21/ min and a base fare of \$0.8 is assumed for the MoD service, whereas the transit fare is assumed to be a fixed value of \$2. To convert the monetary costs into time units, the value of travel time equal to 23\$/hr is used. The value of parameter $\mathcal{A}$ used in the wait time computations of MoD service is assumed to be equal to 0.0017 for all the zones (\citet{Yin2019y}). The available number of buses $\bar{B}$ and vehicles $\bar{F}$ are assumed to be 70 and 3,000 respectively. There are 576 O-D pairs in the network with a total number of trips equal to 36,060. All implementations are coded in Python 3.8 using Gurobi 9.0.1 as the optimization solver. The tests were executed on Intel(R) i7-7700 CPU running at 3.6 GHz with 32 GB RAM under a Windows operating system. \\

	\begin{figure}[h!]		
		\begin{subfigure}[t]{0.50\textwidth}            
			\begin{tikzpicture}[scale = 0.15]
			\csvreader[head to column names]{Data/nodes.csv}{}
			{
				\node[draw,circle, inner sep = 1pt](N\Node) at (\X,\Y){\Node}; 
			}
			\csvreader[head to column names]{Data/links.csv}{}
			{
				\draw(N\fromNode) -- (N\toNode);
			}
			\end{tikzpicture}    
			\caption{Road network}
		\end{subfigure}        
		\hfill
		\begin{subfigure}[t]{0.50\textwidth}            
			\begin{tikzpicture}[scale = 55]
			\csvreader[head to column names]{Data/transitnodes.csv}{}
			{
				\node[draw,circle, inner sep = 1pt](\Node) at (\X,\Y){}; 
			}
			\csvreader[head to column names]{Data/transitlinks.csv}{}
			{
				\draw(\fromNode) -- (\toNode);
			}
			\end{tikzpicture}       
			\caption{Transit network}
		\end{subfigure}
		\caption{Sioux Falls network}
		\label{fig:sfn}
	\end{figure}
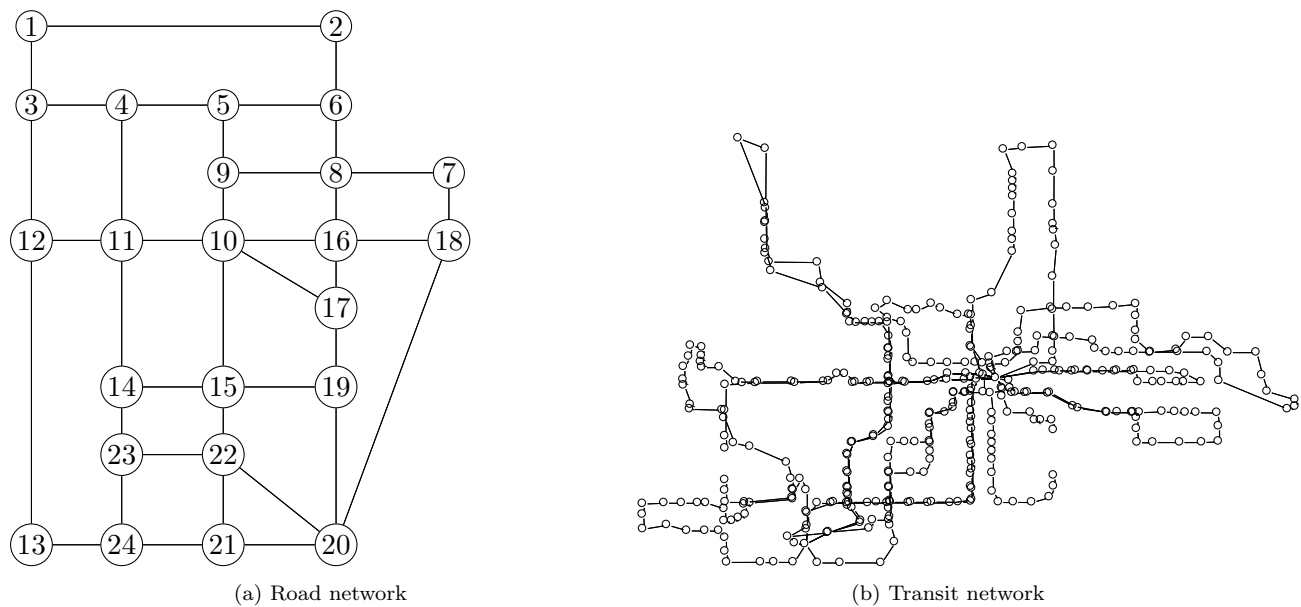

	 \begin{table}[]
 	 	\centering
 	 	\caption{Number of different types of links in the Sioux Falls multimodal network}
 	 	\label{tab:net}
 	 	\begin{tabular}{lc}
 	 		\hline
 	 		\textbf{Link type} & \textbf{Number of links} \\ \hline
 	 		Access ($A^a$)           & 243                      \\ 
 	 		Egress ($A^e$)          & 243                      \\ 
 	 		Road   ($A_R$)           & 76                       \\ 
 	 		Transit ($A_T$)         & 398                      \\ 
 	 		Transit transfer ($A^{tr}_T$)   & 368                      \\ 
 	 		Mode transfer \textbf{($A^m$)}     & 152                       \\ \hline
 	 	\end{tabular}
 	 \end{table}

	\subsection{Network design results}\label{sec:netd}
	We solve the network design problem \eqref{eq:11} for the instance explained in \cref{sec:ins}. The selected transit routes with their optimal frequency are given in Table \ref{tab:routeLoc}. Out of 12 candidate routes, 6 routes are kept operating. The transit network with active and inactive routes is shown in Figure \ref{fig:transitnet}. We observe that most of the routes are located in the central region of the network. All the routes have been assigned the highest frequency, i.e., 12 buses/hr, except route 8, which has been assigned a frequency of 3 buses/hr. To provide this service, 69 buses are required. The average number of vehicles deployed in each zone is given in Table \ref{tab:carAll}. In the optimal allocation of vehicles, it is decided not to deploy any vehicles in 10 zones out of 24 zones. Most of the zones have been allocated 200 vehicles providing an average wait time of 3 minutes. We further observe that the vehicles are deployed in the outskirt zones of the network where transit routes are not located.\\

	\begin{figure}[h!]
		\centering
		\includegraphics[width=1\linewidth]{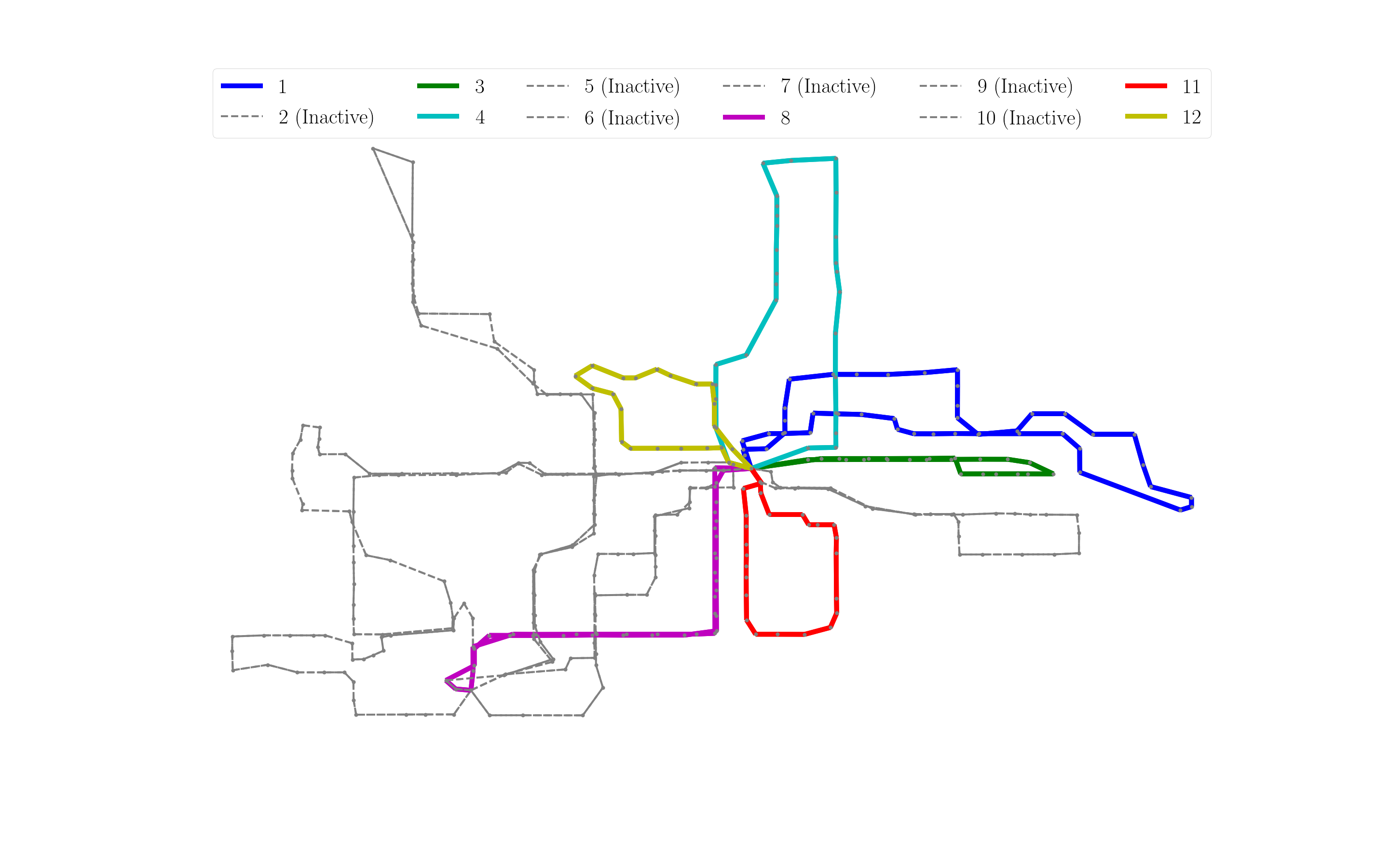}
		\caption{Transit routes (inactive routes are shown by dashed gray color)}
		\label{fig:transitnet}
	\end{figure}	
	
	\begin{table}[h!]
		\caption{Selected transit routes with their optimal frequency}
		\label{tab:routeLoc}
		\centering
		\begin{tabular}{ccccc}
			\hline
			\textbf{Route} & \textbf{Located?}& \textbf{Optimal} & \textbf{Average wait}\\
			& & \textbf{frequency} (buses/hr) & \textbf{time} (min) \\ \hline
			1     &   Yes    &   12         & 5                  \\ 
			2     &  No     &    -        & -                  \\ 
			3     & Yes      & 12            & 5                 \\ 
			4     & Yes      &   12              & 5            \\ 
			5     & No      &     -         & -                \\ 
			6     & No       &  -          &   -             \\ 
			7     & No     &  -            & -               \\ 
			8     & Yes       &   3          & 20               \\ 
			9     & No      &  -           & -                \\ 
			10    & No      &   -          &  -               \\ 
			11    & Yes      &   12          & 5               \\ 
			12    & Yes      &    12          & 5               \\ \hline
		\end{tabular}
	\end{table}

	\begin{table}[h!]
		\caption{vehicle allocation to different zones}
		\label{tab:carAll}
		\centering
		\begin{tabular}{ccc||ccc}
			\hline
			\textbf{Zone} & \textbf{Vehicles} & \textbf{Avg. Wait time (min)} & \textbf{Zone} & \textbf{Vehicles} & \textbf{Avg. Wait time (min)} \\ \hline
			1    & 200        & 3              & 13   & 200        & 3              \\ 
			2    & 100       & 6              & 14   & 200        & 3              \\ 
			3    & 200        & 3             & 15   & 200       & 3             \\ 
			4    & 200        & 3              & 16   & -       & -            \\ 
			5    & -        & -              & 17   & -       & -            \\ 
			6    & -        &  -             & 18   & 200        & 3              \\   
			7    & 200       & 3             & 19   & 200        & 3              \\ 
			8    & -        &  -             & 20   & 200        & 3              \\ 
			9    & -        & -              & 21   & -        & -              \\ 
			10   & -       & -             & 22   & -       & -             \\ 
			11   & 200       & 3             & 23   & -        & -              \\ 
			12   & 500        &  1.2            & 24   & 200        & 3              \\ \hline
		\end{tabular}
	\end{table}

	The total time spent in the system is equal to 11,301 passenger-hours including 8,673 passenger-hours of travel time on various links, 1,881 passenger-hours of wait time spent on the transit network, and 747 passenger-hours of wait time spent on the road network. We found that more passengers take transit than MoD service. The share of passengers using the road, transit, and multimodal service are 23 \%, 61 \%, and 16 \% respectively. The passenger flow on various links and wait time on various nodes of the road and transit networks (resp.) are visualized in Figure \ref{fig:fn}(a) and (b) respectively. We observe that most of the passenger trips in the central zones are made using transit network, whereas the trips on the outskirts of the network are made using both MoD and multimodal service. The figures further show that the congestion in the central zones is significantly improved with the resulting network design.

\definecolor{zero}{rgb}{1,1,.5}
\begin{figure}[h!]
	\begin{subfigure}[t]{0.49\textwidth}            
		
		\centering			
		\tikzset{
			Node/.style = {
				fill = gray,
				draw = none,
				circle,
				minimum width = \fpeval{1+sqrt(#1)/5},
				inner sep=0pt
			},
			Link/.style = {
				line width = 0.7mm,
				draw = {red!\fpeval{#1/20}!yellow!\fpeval{100*(#1>1)}!zero},
			},
			legend/.style = {
				anchor = west,
			},
		}
		
		\begin{tikzpicture}[scale = 0.5]
		\DTLloaddb{nodes}{Data/roadWait.csv}
		\DTLforeach*{nodes}{\id=nodeId,\ny=lat,\nx=long,\nw=Wait}{%
			\node[Node={\nw}](\id) at (\nx/5,\ny/5){};
		}
		\DTLloaddb{links}{Data/roadFlow.csv}
		\DTLforeach*{links}{\nfr=fromNodeId,\nto=toNodeId,\flow=Flow}{%
			\draw[Link={\flow}] (\nfr) -- (\nto);
		}
		\node[legend] at (9,9.75) {\tiny Wait time};
		\foreach \nw in {1,2,3,4,5}{
			\node[Node={\nw*1200}] at (9.5,10-\nw) {};
			\node[legend] at (10,10-\nw) { \tiny \fpeval{\nw*1200}};
		}
		\node[legend] at (9,3.75) {\tiny Passenger flow};
		\foreach \flow in {0,1,2,3,4}{
			\draw[Link={\flow*365}] (9.2,3-0.5*\flow) -- (9.8,3-0.5*\flow);
			\node[legend] at (10,3-0.5*\flow) { \tiny \fpeval{\flow*365}};
		}
		\end{tikzpicture}
		\label{fig:roadFlow}		
		\caption{Flow and wait time of passengers\\ in the road network}
		
	\end{subfigure}        
	\hspace{-2cm}
	\begin{subfigure}[t]{0.49\textwidth}	
		\centering
		\tikzset{
			Node/.style = {
				fill = gray,
				draw = none,
				circle,
				minimum size = \fpeval{sqrt(#1)/8},
				inner sep=0pt
			},
			Link/.style = {
				line width = 0.7mm,
				draw = {red!\fpeval{#1/250}!yellow!\fpeval{1000*(#1>1)}!zero},
			},
			legend/.style = {
				anchor = west,
			},
		}
		
		\begin{tikzpicture}[scale = 0.42]
		\DTLloaddb{tnodes}{Data/transitWait.csv}
		\DTLforeach*{tnodes}{\id=nodeId,\ny=lat,\nx=long,\nw=Wait}{%
			\node[Node={\nw}](\id) at (\nx/8,\ny/8){};
		}
		\DTLloaddb{tlinks}{Data/transitFlow.csv}
		\DTLforeach*{tlinks}{\nfr=fromNodeId,\nto=toNodeId,\flow=Flow}{%
			\draw[Link={\flow}] (\nfr) -- (\nto);
		}
		\node[legend] at (8,7.75) {\tiny Wait time};
		\foreach \nw in {1,2,3,4,5}{
			\node[Node={\nw*3000}] at (8.5,8-1.2*\nw) {};
			\node[legend] at (9,8-1.2*\nw) {\tiny \fpeval{\nw*3000}};
		}
		\node[legend] at (8,-0.50) {\tiny Passenger flow};

		\foreach \flow in {2,3,4,5,6}{
			\draw[Link={\flow*850}] (8.2,-0.5*\flow) -- (8.8,-0.5*\flow);
			\node[legend] at (9,-0.5*\flow) {\tiny \fpeval{(\flow-2)*850}};
		}
		\end{tikzpicture}
		
		\label{fig:transitFlow}		
		\hspace{1cm}
		\caption{Flow and wait time of passengers in the transit network}
	\end{subfigure}

	\caption{Flow and wait time (pass-min) of passengers in the network}
	\label{fig:fn}
\end{figure}

	\subsection{Computational performance}\label{sec:comp}
	In this section, we compare the computational performance of various models and implementation techniques. We consider the following approaches to compare: 
	
	\begin{enumerate}
		\item Solving model \eqref{eq:11} using Gurobi solver
		\item Solving model \eqref{eq:50} using Gurobi solver
		\item Solving model \eqref{eq:50} using classic Benders decomposition (Algorithm \ref{alg:cb})
		\item Solving model \eqref{eq:50} using Benders decomposition with clique/cover cuts (\cref{sec:cc})
		\item Solving model \eqref{eq:50} using Benders decomposition with multiple cuts via multiple solutions (\cref{sec:mult})
		\item Solving model \eqref{eq:50} using Benders decomposition with both clique/cover and multiple cuts via multiple solutions
		\item Solving model \eqref{eq:50} using Benders decomposition with disaggregated cuts (\cref{sec:dis})
		\item Solving model \eqref{eq:50} using Benders decomposition with disaggregated and clique/cover cuts 
		\item Solving model \eqref{eq:50} using Benders decomposition with disaggregated and multiple cuts via multiple solutions 
		\item Solving model \eqref{eq:50} using Benders decomposition with disaggregated, clique/cover, and multiple cuts via multiple solutions
	\end{enumerate}
	
	To solve the bilinear model \eqref{eq:11}, we set the Gurobi parameter \texttt{NonConvex = 2}. For Benders decomposition with multiple cuts via multiple solutions, we set the Gurobi parameters \texttt{PoolSolutions = 2},  \texttt{PoolGap = 0.01}, \texttt{PoolSearchMode = 2}. For all above tests, the maximum time limit was set to 3 hours. \\
	
	\begin{table}[]
		\caption{Computational performance}
		\label{tab:comp}
		\centering
		\begin{tabular}{llll}
			\hline
		\textbf{Method}                                  & \textbf{Iterations} & \textbf{Computational} & \textbf{Gap (\%)}    \\
			                           &  & \textbf{time (s)} &    \\\hline
  			Gurobi bilinear                         & -          & Timed out$^{*}$            & 13.3 \\
            Gurobi                                  & -          & Timed out$^{*}$            & 0.62\\
			Classic                                 & 734        & Timed out$^{*}$           & 0.16\\
			Classic + Clique/Cover                  & 510        & 7,440                   & 0    \\
			Classic + Multiple                      & 500        & 6,524                   & 0    \\
			Classic + Clique/Cover + Multiple       & 423        & 5,566                   & 0    \\
			Disaggregate                            & 31         & 381                    & 0    \\
			Disaggregate + Clique/Cover            & 30         & 347                    & 0    \\
			Disaggregate + Multiple                 & 28         & 535                    & 0    \\
			Disaggregate + Multiple + Clique/Cover & 27         & 498                    & 0   \\  \hline
		\end{tabular}
	\begin{tablenotes}\footnotesize
		\item[*] *Note: Maximum time limit = 3 hours
	\end{tablenotes}
	\end{table}

	The computational performance of every method is shown in Table \ref{tab:comp}. The iterations are counted as the number of times RMP is solved, the computational time is recorded in seconds, and Gap is defined as $(UB - LB)*100/UB$. The bilinear model \eqref{eq:11} is hard to solve, and Gurobi took 3 hours to reach the optimality gap of 13.3 \%. The rest of the results are discussed for the optimization model \eqref{eq:50}. Other than Gurobi and classic Benders decomposition, all the methods coverage to the optimal solution. Gurobi and classic Benders decomposition reached an optimality gap of 0.62\% and 0.16\% respectively. This means both methods reached very close to the optimal solution in 3 hours. The hybrid approach of classic Benders decomposition with both clique/cover cuts and multiple cuts via multiple solutions outperforms the classic Benders decomposition with clique/cover cuts or multiple cuts via multiple solutions only. The disaggregated Benders decomposition is computationally more efficient than any classic Benders approach with cut improvements. The disaggregated cuts with other cuts show further improvement in the solution time and the number of iterations to converge to the optimal solution. The Benders decomposition using disaggregated, clique/cover, and multiple cuts via multiple solutions outperforms other methods in terms of the number of iterations to converge to an optimal solution, whereas Benders decomposition with disaggregated and clique/cover cuts outperform other methods in terms of computational time. This may be because the multiple cuts are generated by solving several subproblems, which takes more computational time, but the generated cuts may not be as effective. Overall, the experiments performed for this section show that the computational methods presented in this study are quite efficient in solving the current problem exactly.

	\subsection{Sensitivity analysis on parameters}\label{sec:sens}
	The availability of buses and vehicles can result in different network design results. Hence, we choose to perform a sensitivity analysis on the available bus fleet $\bar{B}$ and vehicle fleet $\bar{F}$. We solve the model \eqref{eq:50} with varying bus fleet size of 25, 50, 75, 100, and 150 and varying vehicle fleet size of 500, 1000, 3000, 5000, 8000. Figure \ref{fig:sens} and \ref{fig:sens1}  show the sensitivity analysis results based on contour plots. The x-axis shows the varying vehicle fleet sizes, and contours represent varying bus fleet sizes. Figure \ref{fig:sens}(a), (b), (c), and (d) show the in-vehicle cost, average road wait time, average transit wait time, and total expected travel cost in passenger-hours respectively. We can observe that the in-vehicle cost decreases with the increase in the number of available vehicles. The effect of increasing the number of buses is more than the increase in the number of vehicles. Moreover, the in-vehicle cost is not affected by increasing the number of vehicles to more than 5,000. The average road wait time decreases with the increase in the number of available vehicles. It also decreases with the increase in the number of available buses due to mode shift. The passenger-hours spent as the wait time in the transit network increases with the increase in the number of available vehicles as well as buses. This is because more passengers take the transit mode as more buses are made available. The overall expected travel cost also reduces with the increase in the bus and vehicle fleet. However, the effect of an increase in the vehicle fleet size of more than 5,000 is negligible. Figure \ref{fig:sens1} shows the mode share as a function of the available bus and vehicle fleet size. As expected, the transit and MoD share increase with the increase in the available bus and vehicle fleet size respectively. The share of multimodal service increases with the number of vehicles and buses up to 5,000 and 75 respectively but declines after that. The decline in multimodal share is because of the reduced wait time for both services, which drives passengers to use single mode rather than multiple modes.

	\pgfplotsset{compat = 1.3}
	\begin{figure}[H]
		\begin{subfigure}[t]{0.43\textwidth}
			\begin{tikzpicture}[scale=0.8]
			\begin{axis}[xtick={500,1000,3000,5000,8000},x tick label style = {font = \small, align = center, rotate = 60, anchor = north east}, xlabel=$\bar{F}$,  ylabel = Total in-vehicle cost]
			\addplot[contour prepared, contour/label distance=4cm]
			table {Data/ivt.txt};
			\end{axis}
			\end{tikzpicture}
			\label{fig:sensa}			
			\caption{In-vehicle cost (pass-hrs)}
		\end{subfigure}        
		\hfill
		\begin{subfigure}[t]{0.43\textwidth}
			\begin{tikzpicture}[scale=0.8]
			\begin{axis}[xtick={500,1000,3000,5000,8000}, ,x tick label style = {font = \small, align = center, rotate = 60, anchor = north east}, xlabel=$\bar{F}$,  ylabel = Road wait time]
			\addplot[contour prepared, contour/label distance=5cm]
			table {Data/roadWait.txt};
			\end{axis}
			\end{tikzpicture}
			\label{fig:sensb}			
			\caption{Average road wait time (pass-hrs)}
		\end{subfigure}  
		\hfill
		\begin{subfigure}[t]{0.43\textwidth}
			\begin{tikzpicture}[scale=0.8]
			\begin{axis}[xtick={500,1000,3000,5000,8000}, ,x tick label style = {font = \small, align = center, rotate = 60, anchor = north east}, xlabel=$\bar{F}$,  ylabel = Transit wait time]
			\addplot[contour prepared, contour/label distance=6cm]
			table {Data/transitWait.txt};
			\end{axis}
			\end{tikzpicture}
			\label{fig:senc}		
			\caption{Average transit wait time (pass-hrs)}
		\end{subfigure}   
		\hfill
		\begin{subfigure}[t]{0.43\textwidth}
			\begin{tikzpicture}[scale=0.8]
			\begin{axis}[xtick={500,1000,3000,5000,8000}, ,x tick label style = {font = \small, align = center, rotate = 60, anchor = north east}, xlabel=$\bar{F}$,  ylabel = Total expected travel cost]
			\addplot[contour prepared, contour/label distance=6cm]
			table {Data/totalCost.txt};
			\end{axis}
			\end{tikzpicture}
			\label{fig:sensd}			
			\caption{Total expected travel cost (pass-hrs)}
		\end{subfigure}  
		\caption{Sensitivity of parameters $\bar{F}$ and $\bar{B}$ on different costs (contour represents varying bus fleet sizes)}
		\label{fig:sens}
	\end{figure}
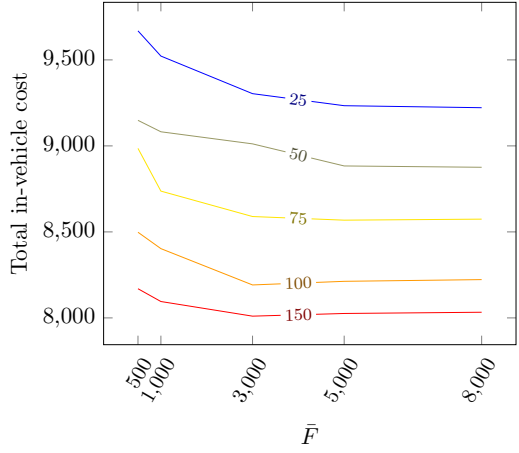
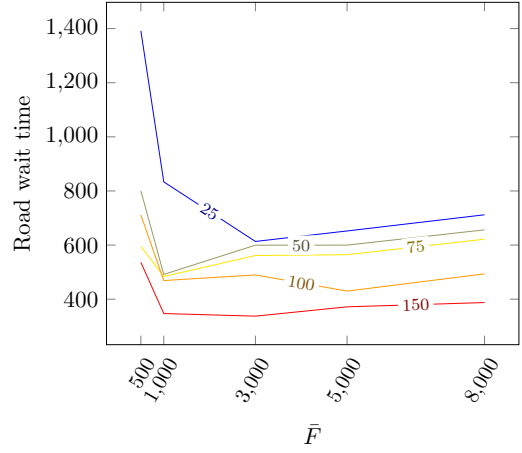
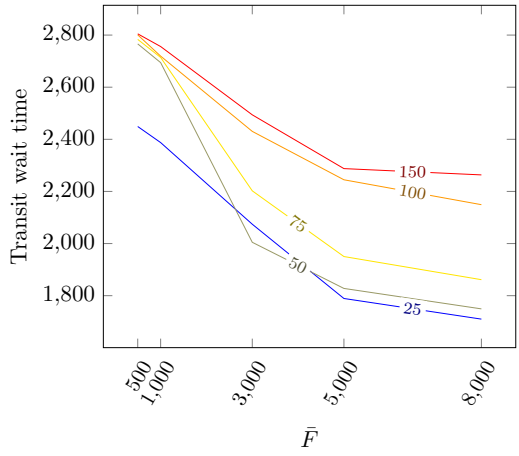
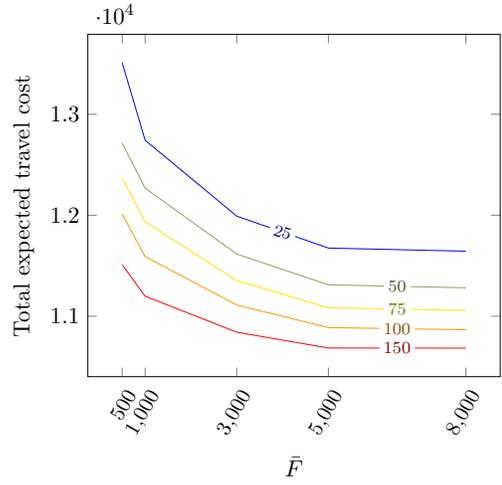

	\begin{figure}[H]
		\begin{subfigure}[t]{0.43\textwidth}
			\begin{tikzpicture}[scale=0.8]
			\begin{axis}[xtick={500,1000,3000,5000,8000}, ,x tick label style = {font = \small, align = center, rotate = 60, anchor = north east}, xlabel=$\bar{F}$,  ylabel = MoD share]
			\addplot[contour prepared, contour/label distance=6cm]
			table {Data/autoShare.txt};
			\end{axis}
			\end{tikzpicture}
			\label{fig:sensa1}			
			\caption{MoD share}
		\end{subfigure}        
		\hfill
		\begin{subfigure}[t]{0.43\textwidth}
			\begin{tikzpicture}[scale=0.8]
			\begin{axis}[xtick={500,1000,3000,5000,8000}, ,x tick label style = {font = \small, align = center, rotate = 60, anchor = north east}, xlabel=$\bar{F}$,  ylabel = Transit share]

			\addplot[contour prepared, contour/label distance=5cm]
			table {Data/transitShare.txt};
			\end{axis}
			\end{tikzpicture}
			\label{fig:sensb1}			
			\caption{Transit share}
		\end{subfigure}  
		\hfill
		\begin{subfigure}[t]{0.43\textwidth}
			\begin{tikzpicture}[scale=0.8]
			\begin{axis}[xtick={500,1000,3000,5000,8000}, ,x tick label style = {font = \small, align = center, rotate = 60, anchor = north east}, xlabel=$\bar{F}$,  ylabel = Multimode share]
			\addplot[contour prepared, contour/label distance=6cm]
			table {Data/multimodeShare.txt};
			\end{axis}
			\end{tikzpicture}
			\label{fig:senc1}		
			\caption{Multimode share}
		\end{subfigure}   
		\caption{Sensitivity of parameters $\bar{F}$ and $\bar{B}$ on mode share (contour represents varying bus fleet sizes)}
		\label{fig:sens1}
	\end{figure}
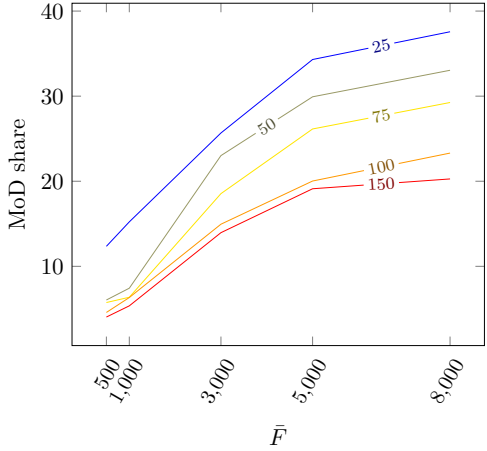
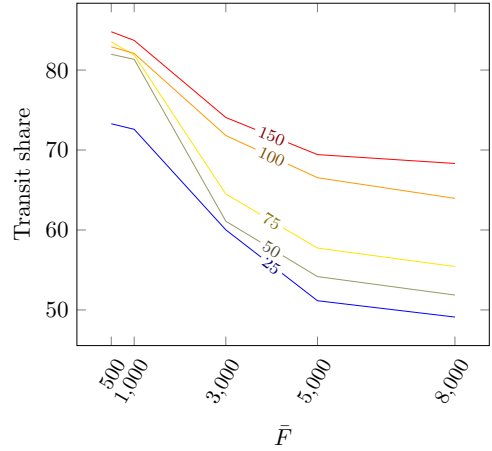
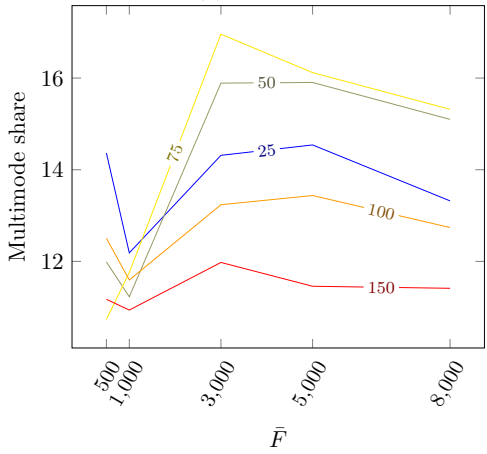

	\subsection{Comparison of optimized base transit system with proposed integrated system}\label{sec:comp2}
	In this section, we present a comparison of the operation of the ``optimized base transit system" corresponding to the existing transit system with optimized frequencies versus the design of the integrated system evaluated in \cref{sec:netd}. For the optimized base case, we solve the optimization program \eqref{eq:70} for the instance described in \cref{sec:ins}. The results of optimized frequencies of various routes are given in Table \ref{tab:routeLoc2}. The network provides an average wait time of 18 minutes to the passengers. 
	
		\begin{subequations}\label{eq:70}
		\begin{align}
		& \underset{v, W, y}{\text{minimize}}
		& & \sum_{k \in D} \left(\sum_{a \in A} c_{a} v_{ak} + \sum_{i \in N^w_T} W_{ik}\right) \\
		& \text{subject to}
		& & \sum_{l \in L} \sum_{f \in \Theta} \mathcal{B}(l, f) \times y_{lf}  \le \bar{B}  \\
		& & & \sum_{a \in FS(i)}  v_{ak} = \sum_{a \in BS(i)}  v_{ak} +  g_{ik}, \forall i \in N, \forall k \in D  \\
		& & & v_{ak} \le  \left(\sum_{f\in \Theta} f y_{l(a)f}\right) W_{ik}, \forall a \in FS(i): a \in A_T, \forall i \in N^w, \forall k \in D\\
		& & &  v_{ak} \ge 0, \forall a \in A, \forall k \in D \\
		& & & W_{ik} \text{ free }, \forall i \in N^w_T, \forall k \in D \\
		& & &  y_{lf} \in \mathfrak{B}, \forall f \in \Theta,  \forall l \in L
		\end{align}
	\end{subequations}

	\begin{table}[h!]
		\caption{Routes with their optimal frequency (optimized base case)}
		\label{tab:routeLoc2}
		\centering
		\begin{tabular}{cccc}
			\hline
			\textbf{Route} & \textbf{Optimal} & \textbf{Average wait}\\
			&  \textbf{frequency} (buses/hr) & \textbf{time} (min) \\ \hline
			1     &   4         & 15                  \\ 
			2     &   2        & 30                 \\ 
			3     &   6            & 10                \\ 
			4     &   12              & 5            \\ 
			5     &    3         & 20                \\ 
			6     &  2         &   30             \\ 
			7     &  3            & 20               \\ 
			8     &   3          & 20               \\ 
			9     &  3          &   20             \\ 
			10    &   2         &  30              \\ 
			11    &   12          & 5               \\ 
			12    &    6          & 10              \\ \hline
		\end{tabular}
	\end{table}

	The results comparing the performance of the optimized base transit system and integrated system are provided in Table \ref{tab:comp1}. The base transit system has 12 two-way bus services operated by a bus fleet of 69 buses, whereas the new integrated system has 6 two-way bus services operated by 69 buses. Along with 69 buses, the new integrated system deploys 3,000 vehicles to serve the demand. The deployment of these extra vehicles can be costly to the transportation agencies. However, they provide several benefits. First, the optimized base transit system is not able to serve 13 \% of the demand due to the non-availability of transit service in 2 zones in the network. On the other hand, the first mile and last mile of these zones are covered by vehicles in the integrated system. Second, the average in-vehicle travel time of passengers using the integrated system is only 14.43 minutes in comparison to the 21.16 minutes for passengers using the base transit system. However, the average wait time of the integrated system users is increased slightly in comparison to the base transit system. This is due to the increased number of transfers to access MoD and transit service. 

	\begin{table}[h!]
		\caption{Comparison of optimized base transit system and integrated system}
		\label{tab:comp1}
		\centering
		\begin{tabular}{lcc}
			\hline
			& \textbf{Optimized base} & \textbf{Integrated} \\
			& \textbf{transit system} & \textbf{system} \\ \hline
			Number of active routes                 & 12                            & 6                     \\
			Number of buses used                    & 69                            & 69                    \\
			Number of vehicles used                      & 0                             & 3,000                  \\
			Satisfied demand (\%)                   & 87                            & 100                   \\
			Average in-vehicle time (min/passenger) & 21.16                         & 14.43                 \\
			Average wait time (min/passenger)       & 2.88                          & 4.37   \\ \hline
		\end{tabular}
	\end{table}

	\subsection{Managerial insights for implementing such service}\label{sec:policy}
	For implementing the proposed model in practice, we need to follow the following procedure. First, we divide the region into zones. Second, we collect the peak hour demand data in the region. Third, solve the proposed design model for varying fleet sizes. This step will be similar to the sensitivity analysis given in Section \ref{sec:sens}. This analysis will help us decide the optimal fleet size of buses and vehicles for our service. It will also provide us with the allocation of vehicles and buses for different zones and bus routes respectively. This allocation is designed for peak hours. We can reduce the vehicle operation in non-peak hours. For the bus service, scheduling needs to be performed to publish a schedule for the service.\\
	
	\fg{The numerical results show that the proposed integrated MoD and transit system provides several benefits. First, it relieves congestion in the city center by allocating the transit frequency from low-population density areas to congested areas. Second, it improves the level of service and access of transit passengers, which leads to an increase in the overall share of transit trips in the city. Third, improvement in transit passenger mobility further leads to various socio-environmental benefits.}

	\section{Conclusions and Future Research}  \label{sec:cf}
	Advances in mobility services have paved the way for the development of a new type of MoD service, which can help in serving the first mile and last mile of transit trips. Such a system requires rethinking the design of a transit system that allows for intermodal trips with MoD as the first or last leg of trips. We developed a mixed-integer non-linear program (MINLP) to design such system. The MINLP was relaxed to a mixed-integer linear program with the help of McCormick relaxations. To solve the resulting MILP model efficiently, we proposed the Benders decomposition method with several enhancements. These enhancements include the use of disaggregated cuts, clique/cover cuts, and multiple cuts via multiple solutions. The numerical results show that disaggregated cuts with clique/cover cuts and multiple cuts via multiple solutions are efficient techniques to solve the current problem. Furthermore, the experiment results on the Sioux Falls network show that the congestion in the city center is improved with such design as most of the passengers were found to take the transit in that region. The sensitivity analysis on bus fleet size and vehicle fleet size shows that the passenger hours spent in the system as in-vehicle time and wait time reduces with an increase in the number of available buses and vehicles. The share of multimodal service was observed to be highest for the vehicle fleet size and bus fleet size of 3,000 and 75 respectively. We also compared the proposed integrated system with the optimized base transit system. We found that the integrated system can be costly due to the deployment of vehicles, but it reduces the passenger in-vehicle time and serves more demand than the optimized base case. \\

	This research can be expanded in multiple directions. First, ridepooling was not allowed in the current study. Further research is needed to explore the ideas of including the matching of passengers for ridepooling, which will further reduce the size of the vehicle fleet required to provide the service. Second, better calibration of parameter $\mathcal{A}$ used in the wait time computation of MoD service is needed. The data from ridehailing services can be used for this purpose. \fg{Third, stochasticity from both supply-side and demand-side should be studied in the future as stochasticity can be good or bad for a transport system (\citet{Wang2021})}.

	\section*{Acknowledgments}
	This research is conducted at the \href{http://umntransit.weebly.com/}{University of Minnesota Transit Lab}, currently supported by the following, but not limited to, projects:
	\begin{itemize}	
		\item[-] National Science Foundation, award CMMI-1831140
		\item[-] Freight Mobility Research Institute (FMRI), Tier 1 Transportation Center, U.S. Department of Transportation: award RR-K78/FAU SP\#16-532 AM2 and AM3
		\item[-] Minnesota Department of Transportation, Contract No. 1003325 Work Order No. 44 and 111
		\item[-] University of Minnesota Office of Vice President for Research, COVID-19 Rapid Response Grants 
	\end{itemize}	
	
	\newpage

	\bibliographystyle{plainnat}
	\bibliography{library.bib}

@article{Maheo2019,
abstract = {The BusPlus project aims at improving the off-peak hours public transit service in Canberra, Australia. To address the difficulty of covering a large geographic area, BusPlus proposes a hub and shuttle model consisting of a combination of a few high-frequency bus routes between key hubs and a large number of shuttles that bring passengers from their origin to the closest hub and take them from their last bus stop to their destination. This paper focuses on the design of the bus network and proposes an efficient solving method to this multimodal network design problem based on the Benders decomposition method. Starting from a mixed-integer programming (MIP) formulation of the problem, the paper presents a Benders decomposition approach using dedicated solution techniques for solving independent subproblems, Pareto-optimal cuts, cut bundling, and core point update. Computational results on real-world data from Canberra's public transit system justify the design choices and show that the approach outperforms the MIP formulation by two orders of magnitude. Moreover, the results show that the hub and shuttle model may decrease transit time by a factor of two, while staying within the costs of the existing transit system.},
archivePrefix = {arXiv},
arxivId = {1601.00367},
author = {Mah{\'{e}}o, Arthur and Kilby, Philip and {Van Hentenryck}, Pascal},
doi = {10.1287/trsc.2017.0756},
eprint = {1601.00367},
file = {:C$\backslash$:/Users/Pramesh Kumar/AppData/Local/Mendeley Ltd./Mendeley Desktop/Downloaded/Mah{\'{e}}o, Kilby, Van Hentenryck - 2019 - Benders decomposition for the design of a hub and shuttle public transit system(2).pdf:pdf},
issn = {15265447},
journal = {Transportation Science},
keywords = {Benders decomposition,Combinatorial optimization,Cut bundling,Multimodal transportation,Pareto-optimal cuts},
number = {1},
pages = {77--88},
title = {{Benders decomposition for the design of a hub and shuttle public transit system}},
volume = {53},
year = {2019}
}

@article{Ma2019,
abstract = {We propose a ridesharing strategy with integrated transit in which a private on-demand mobility service operator may drop off a passenger directly door-to-door, commit to dropping them at a transit station or picking up from a transit station, or to both pickup and drop off at two different stations with different vehicles. We study the effectiveness of online solution algorithms for this proposed strategy. Queueing-theoretic vehicle dispatch and idle vehicle relocation algorithms are customized for the problem. Several experiments are conducted first with a synthetic instance to design and test the effectiveness of this integrated solution method, the influence of different model parameters, and measure the benefit of such cooperation. Results suggest that rideshare vehicle travel time can drop by 40–60{\%} consistently while passenger journey times can be reduced by 50–60{\%} when demand is high. A case study of Long Island commuters to New York City (NYC) suggests having the proposed operating strategy can substantially cut user journey times and operating costs by up to 54{\%} and 60{\%} each for a range of 10–30 taxis initiated per zone. This result shows that there are settings where such service is highly warranted.},
archivePrefix = {arXiv},
arxivId = {1901.00760},
author = {Ma, Tai Yu and Rasulkhani, Saeid and Chow, Joseph Y.J. and Klein, Sylvain},
doi = {10.1016/j.tre.2019.07.002},
eprint = {1901.00760},
issn = {13665545},
journal = {Transportation Research Part E: Logistics and Transportation Review},
keywords = {Idle vehicle relocation,Mobility-on-demand,Multimodal,Pickup and delivery,Ridesharing,Transportation system},
number = {July},
pages = {417--442},
publisher = {Elsevier},
title = {{A dynamic ridesharing dispatch and idle vehicle repositioning strategy with integrated transit transfers}},
url = {https://doi.org/10.1016/j.tre.2019.07.002},
volume = {128},
year = {2019}
}

@article{Li2009a,
abstract = {Feeder transit services generally operate within residential service areas and move customers to and from a transfer point that connects to a major fixed-route transit network. Feeders can operate in a traditional fixed-route or in an emerging demand-responsive fashion. In designing such systems, planners may divide the entire service area into zones independently served by a single feeder line to provide better customer service, lower operating cost, and make management of the operations easier. An analytical model is developed to help decision makers determine the number of zones in a residential service area while balancing customer service quality and vehicle operating costs. For fixed-route and demand-responsive feeder transit, closed-form expressions and numerical procedures are used to derive the optimal number of zones as a function of the main parameters. Analytical expressions are validated by simulation runs.},
author = {Li, Xiugang and Quadrifoglio, Luca},
doi = {10.3141/2111-13},
file = {:C$\backslash$:/Users/Pramesh Kumar/AppData/Local/Mendeley Ltd./Mendeley Desktop/Downloaded/Li, Quadrifoglio - 2009 - Optimal Zone Design for Feeder Transit Services.pdf:pdf},
issn = {0361-1981},
journal = {Transportation Research Record: Journal of the Transportation Research Board},
number = {1},
pages = {100--108},
title = {{Optimal Zone Design for Feeder Transit Services}},
volume = {2111},
year = {2009}
}

@article{de1993transit,
author = {{De Cea}, Joaquin and Fern{\'{a}}ndez, Enrique},
journal = {Transportation science},
number = {2},
pages = {133--147},
publisher = {INFORMS},
title = {{Transit assignment for congested public transport systems: an equilibrium model}},
volume = {27},
year = {1993}
}

@misc{Yin2019y,
author = {Yin, Yafeng},
booktitle = {Youtube},
title = {{Macroscopic modeling of ridesourcing systems - Regulations and Fundamental Diagram}},
url = {https://www.youtube.com/watch?v=oiBhwJl5xXc{\&}t=717s},
year = {2019}
}

@article{Khani2012,
abstract = {A simple but efficient algorithm is proposed for finding the optimal path in an intermodal urban transportation network. The network is a general transportation network with multiple modes (auto, bus, rail, walk, etc.) divided into the two major categories of private and public, with proper transfer constraints. The goal was to find the optimal path according to the generalized cost, including private-side travel cost, public-side travel cost, and transfer cost. A detailed network model of transfers between modes was used to improve the accounting of travel times during these transfers. The intermodal path algorithm was a sequential application of specific cases of transit and auto shortest paths and resulted in the optimal intermodal path, with the optimal park-and-ride location for transferring from private to public modes. The computational complexity of the algorithm was shown to be a significant improvement over existing algorithms. The algorithm was applied to a real network within a dynamic traffic and transit assignment procedure and integrated with a sequential activity choice model.},
author = {Khani, Alireza and Lee, Sanggu and Hickman, Mark and Noh, Hyunsoo and Nassir, Neema},
doi = {10.3141/2284-05},
file = {:C$\backslash$:/Users/Pramesh Kumar/AppData/Local/Mendeley Ltd./Mendeley Desktop/Downloaded/Khani et al. - 2012 - Intermodal Path Algorithm for Time-Dependent Auto Network and Scheduled Transit Service.pdf:pdf},
issn = {0361-1981},
journal = {Transportation Research Record: Journal of the Transportation Research Board},
number = {2284},
pages = {40--46},
title = {{Intermodal Path Algorithm for Time-Dependent Auto Network and Scheduled Transit Service}},
url = {http://trrjournalonline.trb.org/doi/10.3141/2284-05},
volume = {2284},
year = {2012}
}

@article{cepeda2006frequency,
author = {Cepeda, Manuel and Cominetti, Roberto and Florian, Michael},
file = {:D$\backslash$:/Dropbox/Homeworks/Transportation Network Analysis/Papers/Transit Assignment/FB/(2006) Cepeda et al. capacity constrained.pdf:pdf},
journal = {Transportation research part B: Methodological},
number = {6},
pages = {437--459},
publisher = {Elsevier},
title = {{A frequency-based assignment model for congested transit networks with strict capacity constraints: characterization and computation of equilibria}},
volume = {40},
year = {2006}
}

@article{Zha2016,
abstract = {Ride-sourcing refers to an emerging urban mobility service that private car owners drive their own vehicles to provide for-hire rides. This paper analyzes the ride-sourcing market using an aggregate model where the matchings between customers and drivers are captured by an exogenous matching function. It is found that without any regulatory intervention a monopoly ride-sourcing platform will maximize the joint profit with its drivers. On the other hand, the first-best solution is not sustainable when the matching function exhibits increasing returns to scale and the cost function of the platform is subject to economies of scale. Regardless of the examined market scenarios, the average waiting time of customers is proportional to the average searching time of drivers. We establish conditions for regulators to solely regulate the commission charged by the platform to guarantee the second-best. We further investigate the competition of ride-sourcing platforms and find that competition does not necessarily lower the price level or improve social welfare. In the latter case, regulators may rather encourage the merger of the platforms and regulate them directly as a monopolist.},
author = {Zha, Liteng and Yin, Yafeng and Yang, Hai},
doi = {10.1016/j.trc.2016.07.010},
file = {:C$\backslash$:/Users/Pramesh Kumar/AppData/Local/Mendeley Ltd./Mendeley Desktop/Downloaded/Zha, Yin, Yang - 2016 - Economic analysis of ride-sourcing markets.pdf:pdf},
issn = {0968090X},
journal = {Transportation Research Part C: Emerging Technologies},
keywords = {Matching friction,Platform competition,Regulation,Ride-sourcing},
pages = {249--266},
publisher = {Elsevier Ltd},
title = {{Economic analysis of ride-sourcing markets}},
url = {http://dx.doi.org/10.1016/j.trc.2016.07.010},
volume = {71},
year = {2016}
}

@article{Guihaire2008,
abstract = {This paper presents a global review of the crucial strategic and tactical steps of transit planning: the design and scheduling of the network. These steps influence directly the quality of service through coverage and directness concerns but also the economic profitability of the system since operational costs are highly dependent on the network structure. We first exhibit the context and the goals of strategic and tactical transit planning. We then establish a terminology proposal in order to name sub-problems and thereby structure the review. Then, we propose a classification of 69 approaches dealing with the design, frequencies setting, timetabling of transit lines and their combinations. We provide a descriptive analysis of each work so as to highlight their main characteristics in the frame of a two-fold classification referencing both the problem tackled and the solution method used. Finally, we expose recent context evolutions and identify some trends for future research. This paper aims to contribute to unification of the field and constitutes a useful complement to the few existing reviews. {\textcopyright} 2008 Elsevier Ltd. All rights reserved.},
author = {Guihaire, Val{\'{e}}rie and Hao, Jin Kao},
doi = {10.1016/j.tra.2008.03.011},
file = {:C$\backslash$:/Users/Pramesh Kumar/AppData/Local/Mendeley Ltd./Mendeley Desktop/Downloaded/Guihaire, Hao - 2008 - Transit network design and scheduling A global review.pdf:pdf},
issn = {09658564},
journal = {Transportation Research Part A: Policy and Practice},
keywords = {Network design,Network scheduling,Public transportation},
number = {10},
pages = {1251--1273},
publisher = {Elsevier Ltd},
title = {{Transit network design and scheduling: A global review}},
url = {http://dx.doi.org/10.1016/j.tra.2008.03.011},
volume = {42},
year = {2008}
}

@article{doi:10.1177/0361198120936267,
abstract = {High costs of owning fully-automated or autonomous vehicles (AVs) will fuel the demand for shared mobility, with zero driver costs. Although sharing sounds good for the transport system, congestion can easily rise without adequate policy measures. Many or all public transit lines will continue to exist, and carefully-designed policies can be implemented to make good use of fixed public assets, like commuter- and light-rail lines. In this study, a shared AV (SAV) fleet is analyzed as a potential solution to the first-mile-last-mile (FMLM) problem for access to and from public transit. Essentially, SAVs are analyzed as collector-distributor systems for these mass-movers and compared with a door-to-door (D2D) service. Results from an agent-based simulation of Austin, Texas, show that SAVs have the potential to help solve FMLM transit problems when fare benefits are provided to transit users. Restricting SAV use for FMLM trips increases transit coverage, lowers average access and egress walking distance, and shifts demand away from park-and-ride and long walk trips. When SAVs are available for both D2D use and FMLM trips, high SAV fares help maintain transit demand, without which the transit demand may decrease significantly, affecting the transit supply and the overall system reliability. Policy makers and planners should be wary of this shift away from transit and may be able to increase transit usage using policies tested in this study.},
author = {Gurumurthy, Krishna Murthy and Kockelman, Kara M and Zuniga-Garcia, Natalia},
doi = {10.1177/0361198120936267},
file = {:C$\backslash$:/Users/Pramesh Kumar/AppData/Local/Mendeley Ltd./Mendeley Desktop/Downloaded/Gurumurthy, Kockelman, Zuniga-Garcia - 2020 - First-Mile-Last-Mile Collector-Distributor System using Shared Autonomous Mobility.pdf:pdf},
journal = {Transportation Research Record},
number = {0},
pages = {0361198120936267},
title = {{First-Mile-Last-Mile Collector-Distributor System using Shared Autonomous Mobility}},
url = {https://doi.org/10.1177/0361198120936267},
volume = {0},
year = {2020}
}

@article{Saharidis2010a,
abstract = {A new multi-generation of cuts algorithm is presented in this paper to improve the efficiency of Benders decomposition approach for the cases that optimality cuts are difficult to be achieved within the iterations of the algorithm. This strategy is referred to as maximum feasible subsystem (MFS) cut generation strategy. In this approach in each iteration of the Benders algorithm an additional cut is generated that has the property to restrict the value of the objective function of the Benders master problem. To illustrate the efficiency of the proposed strategy, it is applied to a scheduling problem of multipurpose multiproduct batch plant. Two different partitioning alternatives are tested in order to show the importance of the way that a problem is decomposed upon the efficiency of the Benders algorithm. The application of the proposed acceleration procedure results in substantial reduction of CPU solution time and the total number of iterations in both decomposition alternatives. {\textcopyright} 2009 Elsevier Ltd.},
author = {Saharidis, Georgios K.D. and Ierapetritou, Marianthi G.},
doi = {10.1016/j.compchemeng.2009.10.002},
file = {:C$\backslash$:/Users/Pramesh Kumar/AppData/Local/Mendeley Ltd./Mendeley Desktop/Downloaded/Saharidis, Ierapetritou - 2010 - Improving benders decomposition using maximum feasible subsystem (MFS) cut generation strategy.pdf:pdf},
issn = {00981354},
journal = {Computers and Chemical Engineering},
keywords = {Benders decomposition,Maximum feasible subsystem,Mixed integer programming,Multi-generation of cuts},
number = {8},
pages = {1237--1245},
publisher = {Elsevier Ltd},
title = {{Improving benders decomposition using maximum feasible subsystem (MFS) cut generation strategy}},
volume = {34},
year = {2010}
}

@article{Wang2017a,
abstract = {The last-mile problem concerns the provision of travel services from the nearest public transportation node to a passenger's home or other destination. We study the operation of an emerging last-mile transportation system (LMTS) with batch demands that result from the arrival of groups of passengers who desire last-mile service at urban metro stations or bus stops. Routes and schedules are determined for a multivehicle fleet of delivery vehicles, with the objective of minimizing passenger waiting time and riding time. An exact mixed-integer programming (MIP) model for LMTS operations is presented first, which is difficult to solve optimally within acceptable computational times. Computationally feasible heuristic approaches are then developed: a myopic operating strategy that uses only demand information from trains that have already arrived, a metaheuristic approach based on a tabu search that employs demand information over the entire service horizon, and a two-stage method that solves the MIP model app...},
author = {Wang, Hai},
doi = {10.1287/trsc.2017.0753},
file = {:C$\backslash$:/Users/Pramesh Kumar/AppData/Local/Mendeley Ltd./Mendeley Desktop/Downloaded/Wang - 2017 - Routing and Scheduling for a Last-Mile Transportation System.pdf:pdf},
isbn = {0000000292603},
issn = {0041-1655},
journal = {Transportation Science},
number = {December 2018},
pages = {trsc.2017.0753},
title = {{Routing and Scheduling for a Last-Mile Transportation System}},
url = {http://pubsonline.informs.org/doi/10.1287/trsc.2017.0753},
volume = {53},
year = {2017}
}

@article{Aftabuzzaman2015,
abstract = {Traffic congestion is a major urban transport problem. Efficient public transport (PT) can be one of the potential solutions to the problem of urban road traffic congestion. Public transport systems can carry a significant amount of trips during congested hours, improving overall transportation capacity, and can release the burden of excess demand on congested road networks. This paper presents a comparative assessment of international research valuing the congestion relief impacts of PT. It explores previous research valuing congestion relief impacts and examines second-ary evidence demonstrating changes in mode split associated with changes in public transport. The research establishes a framework for estimating the monetary value of the congestion reduction impacts of public transport. Congestion relief impacts are valued at between 4.4 and 151.4 cents (Aus{\$}, 2008) per marginal vehicle km of travel, with an average of 45.0 cents. Valuations are higher for circumstances with greater degrees of traffic congestion and also where both travel time and vehicle operating cost savings are considered. A simplified congestion relief valuation model is presented to estimate the congestion relief benefits of PT based on readily -avail-able transport data. Using the average congestion valuation and mode shift evidence, the model has been applied to a number of cities to estimate the monetary value of the congestion relief impact of public transport. Overall, the analysis presents a simplified method to investigate the impact of public transport on traffic congestion.},
author = {Aftabuzzaman, Md and Currie, Graham and Sarvi, Majid},
doi = {10.5038/2375-0901.13.1.1},
file = {:C$\backslash$:/Users/Pramesh Kumar/AppData/Local/Mendeley Ltd./Mendeley Desktop/Downloaded/Aftabuzzaman, Currie, Sarvi - 2015 - Evaluating the Congestion Relief Impacts of Public Transport in Monetary Terms.pdf:pdf},
issn = {1077-291X},
journal = {Journal of Public Transportation},
number = {1},
pages = {1--24},
title = {{Evaluating the Congestion Relief Impacts of Public Transport in Monetary Terms}},
volume = {13},
year = {2015}
}

@book{larson1981urban,
address = {Englewood Cliffs},
author = {Larson, Richard C and Odoni, Amedeo R},
edition = {2nd editio},
pages = {530},
publisher = {Prentice-Hall},
title = {{Urban operations research}},
year = {1981}
}

@article{Benders1962,
author = {Benders, J. F.},
doi = {10.1007/BF01386316},
file = {:D$\backslash$:/Dropbox/Papers to read or delete/Benders Decomposition/(1962) Benders.pdf:pdf},
issn = {0029599X},
journal = {Numerische Mathematik},
number = {1},
pages = {238--252},
title = {{Partitioning procedures for solving mixed-variables programming problems}},
volume = {4},
year = {1962}
}

@article{Leurent2014,
abstract = {In the urban setting, the roadway and railway modes of mass transit are basically purported to carry large flows of passengers. Thus the issue of flowing capacity is crucial in the design and planning of a transit network. As a transit system involves two types of traffic units, respectively passengers and vehicles, there is a broad range of capacity phenomena: (i) as a vehicle has given seat capacity, additional riders have to stand which is less comfortable and more exposed to in-vehicle crowding, (ii) the total capacity in a vehicle, including sitting and standing places, influences the wait time on platform if it is exceeded by the number of candidate riders, (iii) the exchange capacity at vehicle doors influences the vehicle dwell time at a station, (iv) from the station dwell times stems the run time of vehicles - hence of passengers - and in turn the service frequency, (v) vehicle traffic is constrained by dwell time and operating margins, which may reduce the frequency delivered, etc. The paper provides a static, macroscopic model of traffic assignment to a transit network, in which these capacity phenomena are captured. A key feature is the line sub-model that deals with a line of operations, comprised of one or several service routes, by using the topological order of stations. From a matrix of flows by pair of access-egress stations, the sub-model derives the matrix of average passenger costs by access-egress pair, as well as local passenger wait time and the apparent frequency of each leg. At the network level, passenger route choice is modeled by optimal hyperpaths that are route-based (as in De Cea and Fernandez, 1989). It is shown that there exists a state of traffic equilibrium. A Method of Successive Averages is put forward to compute equilibrium. A large scale application to the whole transit network of greater Paris is presented, with focus on capacity issues.},
author = {Leurent, Fabien and Chandakas, Ektoras and Poulh{\`{e}}s, Alexis},
doi = {10.1016/j.trc.2014.07.004},
file = {:D$\backslash$:/Dropbox/Homeworks/Transportation Network Analysis/Papers/Transit Assignment/FB/(2014) Leurent et al Capacitated transit assignment bi-layer framework.pdf:pdf},
issn = {0968090X},
journal = {Transportation Research Part C: Emerging Technologies},
keywords = {Bi-layered assignment,Frequency adaptation,Line model,Platform stock,Seat capacity,Track occupancy,Traffic equilibrium,Transit bottleneck,Vehicle capacity,Vehicle load,Wait time},
number = {P1},
pages = {3--27},
publisher = {Elsevier Ltd},
title = {{A traffic assignment model for passenger transit on a capacitated network: Bi-layer framework, line sub-models and large-scale application}},
url = {http://dx.doi.org/10.1016/j.trc.2014.07.004},
volume = {47},
year = {2014}
}

@article{Route,
author = {Route, Ransit},
file = {:C$\backslash$:/Users/Pramesh Kumar/AppData/Local/Mendeley Ltd./Mendeley Desktop/Downloaded/Route - Unknown - An AI-Based Approach for Ransit Route.pdf:pdf},
pages = {187--209},
title = {{An AI-Based Approach for Ransit Route}},
volume = {25}
}

@article{Wang2021,
abstract = {Uncertainty is usually perceived as having negative effects on transportation systems, such as increasing operation cost, decreasing resource utility, and reducing customer satisfaction. However, it is unclear whether this perception is universally true or is true only under certain conditions. This research compares the performance of transportation systems with uncertain parameters with the performance of the same systems in which the uncertain parameters are replaced by their expectations. The analyses prove that uncertainty can have negative, negligible, and positive impact on the performance of transportation systems under different conditions.},
author = {Wang, Wei and Wu, Yiwei},
doi = {10.1016/j.commtr.2021.100021},
file = {:D$\backslash$:/Down/1-s2.0-S2772424721000214-main.pdf:pdf},
issn = {27724247},
journal = {Communications in Transportation Research},
keywords = {impact of uncertainty,transportation system},
number = {December},
pages = {100021},
publisher = {The Author(s)},
title = {{Is uncertainty always bad for the performance of transportation systems?}},
url = {https://doi.org/10.1016/j.commtr.2021.100021},
volume = {1},
year = {2021}
}

@article{baaj1991ai,
author = {Baaj, M Hadi and Mahmassani, Hani S},
journal = {Journal of advanced transportation},
number = {2},
pages = {187--209},
publisher = {Wiley Online Library},
title = {{An AI-based approach for transit route system planning and design}},
volume = {25},
year = {1991}
}

@article{Pinto2020,
abstract = {Providing quality transit service to travelers in low-density areas, particularly travelers without personal vehicles, is a constant challenge for transit agencies. The advent of fully-autonomous vehicles (AVs) and their inclusion in mobility service fleets may allow transit agencies to offer better service and/or reduce their own capital and operational costs. This study focuses on the problem of allocating resources between transit patterns and operating (or subsidizing) shared-use AV mobility services (SAMSs) in a large metropolitan area. To address this question, a joint transit network redesign and SAMS fleet size determination problem (JTNR-SFSDP) is introduced, and a bi-level mathematical programming formulation and solution approach are presented. The upper-level problem modifies a transit network frequency setting problem (TNFSP) formulation via incorporating SAMS fleet size as a decision variable and allowing the removal of bus routes. The lower-level problem consists of a dynamic combined mode choice-traveler assignment problem (DCMC-TAP) formulation. The heuristic solution procedure involves solving the upper-level problem using a nonlinear programming solver and solving the lower-level problem using an iterative agent-based assignment-simulation approach. To illustrate the effectiveness of the modeling framework, this study uses traveler demand from Chicago along with the region's existing multimodal transit network. The computational results indicate significant traveler benefits, in terms of improved average traveler wait times, associated with optimizing the joint design of multimodal transit networks and SAMS fleets compared with the initial transit network design.},
author = {Pinto, Helen K.R.F. and Hyland, Michael F. and Mahmassani, Hani S. and Verbas, I. {\"{O}}mer},
doi = {10.1016/j.trc.2019.06.010},
file = {:D$\backslash$:/Dropbox/Projects/AV{\_}TNDP/Lit Review/AV-TNDP/Pinto et al. (2018) AV-TNDP.pdf:pdf},
issn = {0968090X},
journal = {Transportation Research Part C: Emerging Technologies},
keywords = {Bi-level programming,Network modeling,Shared autonomous vehicles,Simulation,Transit network design},
number = {June},
pages = {2--20},
publisher = {Elsevier},
title = {{Joint design of multimodal transit networks and shared autonomous mobility fleets}},
volume = {113},
year = {2020}
}

@techreport{vakayil2017integrating,
address = {Washington, D.C.},
author = {Vakayil, Akhil and Gruel, Wolfgang and Samaranayake, Samitha},
institution = {Transportation Research Board},
pages = {17--05439},
title = {{Integrating shared-vehicle mobility-on-demand systems with public transit}},
year = {2017}
}

@inproceedings{8569381,
abstract = {In this paper we study models and coordination policies for intermodal Autonomous Mobility-on-Demand (AMoD), wherein a fleet of self-driving vehicles provides on-demand mobility jointly with public transit. Specifically, we first present a network flow model for intermodal AMoD, where we capture the coupling between AMoD and public transit and the goal is to maximize social welfare. Second, leveraging such a model, we design a pricing and tolling scheme that allows to achieve the social optimum under the assumption of a perfect market with selfish agents. Finally, we present a real-world case study for New York City. Our results show that the coordination between AMoD fleets and public transit can yield significant benefits compared to an AMoD system operating in isolation.},
author = {Salazar, M and Rossi, F and Schiffer, M and Onder, C H and Pavone, M},
booktitle = {2018 21st International Conference on Intelligent Transportation Systems (ITSC)},
doi = {10.1109/ITSC.2018.8569381},
issn = {2153-0017},
keywords = {public transport;road traffic;road vehicles;public},
month = {nov},
pages = {2262--2269},
title = {{On the Interaction between Autonomous Mobility-on-Demand and Public Transportation Systems}},
year = {2018}
}

@article{Spiess1989,
abstract = {We describe a model for the transit assignment problem with a fixed set of transit lines. The traveler chooses the strategy that allows him or her to reach his or her destination at minimum expected cost. First we consider the case in which no congestion effects occur. For the special case in which the waiting time at a stop depends only on the combined frequency, the problem is formulated as a linear programming problem of a size that increases linearly with the network size. A label-setting algorithm is developed that solves the latter problem in polynomial time. Nonlinear cost extensions of the model are considered as well. {\textcopyright} 1989.},
author = {Spiess, Heinz and Florian, Michael},
doi = {10.1016/0191-2615(89)90034-9},
file = {:D$\backslash$:/Dropbox/Homeworks/Transportation Network Analysis/Papers/Transit Assignment/FB/(1987) Spiess and Florian - Optimal Strategy Transit Assignment.pdf:pdf},
issn = {01912615},
journal = {Transportation Research Part B},
number = {2},
pages = {83--102},
title = {{Optimal strategies: A new assignment model for transit networks}},
volume = {23},
year = {1989}
}

@article{Chen2020,
abstract = {Autonomous vehicles (AVs) are paving a way to reshape the operation and management of urban transportation systems by improving travel mobility. In this study, we propose to use AVs for solving the first-mile (FM) problem that aims to transport passengers from their homes to metro stations. Passengers submit travel requests in advance and a fleet operator dispatches AVs and arranges ridesharing in a rolling horizon framework. The ridesharing is implemented by assigning one AV to serve several requests subject to the vehicle capacity, the maximum travel time, and the accessibility constraints. A mixed integer linear programming (MILP) model is formulated to determine the AV dispatch and ridesharing schemes for the minimum operational costs. Then, another MILP model with the objective of the maximum user satisfaction is formulated for the purpose of comparison. A cluster-based solution method is designed to deal with the large-scale FM problem. Extensive numerical experiments are conducted to demonstrate the application of the proposed method. Results show that ridesharing can reduce both required AV fleet size and vehicle traveled miles. The proposed solution method is able to solve the problem efficiently and fulfil the requirement of online computation.},
author = {Chen, Shukai and Wang, Hua and Meng, Qiang},
doi = {10.1111/mice.12461},
issn = {14678667},
journal = {Computer-Aided Civil and Infrastructure Engineering},
number = {1},
pages = {45--60},
title = {{Solving the first-mile ridesharing problem using autonomous vehicles}},
volume = {35},
year = {2020}
}

@book{Koffman2004,
abstract = {This synthesis will be of interest to transit agency staff responsible for vehicle operations and planning and to those who work with them in this regard. Staff can use this report to learn from the experiences of other agencies and to compare their experiences with those of others. It documents and summarizes transit agency experiences with "flexible transit services," including all types of hybrid services that are not pure demand-responsive (including dial-a-ride and ADA paratransit) or fixed-route services, but that fall somewhere in between those traditional service models. The report documents six types of flexible transit service: request stops, flexible route segments, route deviation, point deviation, zone routes, and demand-responsive connector service. This report integrates information from several sources. It is based on data collected from a review of the relevant literature and survey of transit agencies. Twenty-four transit agencies provided information. Survey responses were supplemented by follow-up interviews with transit agency staff and reference to service descriptions available on transit agency websites.},
author = {Koffman, D},
booktitle = {Tcrp Synthesis of Transit Practice},
doi = {10.17226/23364},
file = {:C$\backslash$:/Users/Pramesh Kumar/AppData/Local/Mendeley Ltd./Mendeley Desktop/Downloaded/Koffman - 2004 - Operational Experiences with Flexible Transit Services.pdf:pdf},
isbn = {0309070104},
keywords = {Flexible transit services,Interviewing,Literature reviews,Operations,Planning,Surveys,Transit operating agencies,Websites (Information retrieval)},
publisher = {National Academies Press},
title = {{Operational Experiences with Flexible Transit Services}},
year = {2004}
}

@article{Shen2018,
abstract = {This paper proposes and simulates an integrated autonomous vehicle (AV) and public transportation (PT) system. After discussing the attributes of and the interaction among the prospective stakeholders in the system, we identify opportunities for synergy between AVs and the PT system based on Singapore's organizational structure and demand characteristics. Envisioning an integrated system in the context of the first-mile problem during morning peak hours, we propose to preserve high demand bus routes while repurposing low-demand bus routes and using shared AVs as an alternative. An agent-based supply-side simulation is built to assess the performance of the proposed service in fifty-two scenarios with different fleet sizes and ridesharing preferences. Under a set of assumptions on AV operation costs and dispatching algorithms, the results show that the integrated system has the potential of enhancing service quality, occupying fewer road resources, being financially sustainable, and utilizing bus services more efficiently.},
author = {Shen, Yu and Zhang, Hongmou and Zhao, Jinhua},
doi = {10.1016/j.tra.2018.04.004},
issn = {09658564},
journal = {Transportation Research Part A: Policy and Practice},
keywords = {Agent-based model,Autonomous vehicle,First-mile problem,Mobility-on-demand,Public transportation},
number = {March},
pages = {125--136},
publisher = {Elsevier},
title = {{Integrating shared autonomous vehicle in public transportation system: A supply-side simulation of the first-mile service in Singapore}},
volume = {113},
year = {2018}
}

@misc{Laris2019,
author = {Laris, Michael},
booktitle = {Washington Post},
title = {{Uber and Lyft concede they play role in traffic congestion in the District and other urban areas}},
url = {https://www.washingtonpost.com/transportation/2019/08/06/uber-lyft-concede-they-play-role-traffic-congestion-district-other-urban-areas/},
year = {2019}
}

@article{Levin2017,
abstract = {Shared autonomous vehicles (SAVs) could provide low-cost service to travelers and possibly replace the need for personal vehicles. Previous studies found that each SAV could service multiple travelers, but many used unrealistic congestion models, networks, and/or travel demands. The purpose of this paper is to provide a method for future research to use realistic flow models to obtain more accurate predictions about SAV benefits. This paper presents an event-based framework for implementing SAV behavior in existing traffic simulation models. We demonstrate this framework in a cell transmission model-based dynamic network loading simulator. We also study a heuristic approach for dynamic ride-sharing. We compared personal vehicles and SAV scenarios on the downtown Austin city network. Without dynamic ride-sharing, the additional empty repositioning trips made by SAVs increased congestion and travel times. However, dynamic ride-sharing resulted in travel times comparable to those of personal vehicles because ride-sharing reduced vehicular demand. Overall, the results show that using realistic traffic flow models greatly affects the predictions of how SAVs will affect traffic congestion and travel patterns. Future work should use a framework such as the one in this paper to integrate SAVs with established traffic flow simulators.},
author = {Levin, Michael W. and Kockelman, Kara M. and Boyles, Stephen D. and Li, Tianxin},
doi = {10.1016/j.compenvurbsys.2017.04.006},
file = {:D$\backslash$:/Dropbox/Projects/AV{\_}TNDP/Lit Review/Simulations/A general framework for modeling shared autonomous vehicles with dynamic network-loading and dynamic ride-sharing application  .pdf:pdf},
issn = {01989715},
journal = {Computers, Environment and Urban Systems},
pages = {373--383},
publisher = {Elsevier Ltd},
title = {{A general framework for modeling shared autonomous vehicles with dynamic network-loading and dynamic ride-sharing application}},
url = {http://dx.doi.org/10.1016/j.compenvurbsys.2017.04.006},
volume = {64},
year = {2017}
}

@article{Wen2018a,
abstract = {Autonomous vehicles (AVs) represent potentially disruptive and innovative changes to public transportation (PT) systems. However, the exact interplay between AV and PT is understudied in existing research. This paper proposes a systematic approach to the design, simulation, and evaluation of integrated autonomous vehicle and public transportation (AV + PT) systems. Two features distinguish this research from the state of the art in the literature: the first is the transit-oriented AV operation with the purpose of supporting existing PT modes; the second is the explicit modeling of the interaction between demand and supply. We highlight the transit-orientation by identifying the synergistic opportunities between AV and PT, which makes AVs more acceptable to all the stakeholders and respects the social-purpose considerations such as maintaining service availability and ensuring equity. Specifically, AV is designed to serve first-mile connections to rail stations and provide efficient shared mobility in low-density suburban areas. The interaction between demand and supply is modeled using a set of system dynamics equations and solved as a fixed-point problem through an iterative simulation procedure. We develop an agent-based simulation platform of service and a discrete choice model of demand as two subproblems. Using a feedback loop between supply and demand, we capture the interaction between the decisions of the service operator and those of the travelers and model the choices of both parties. Considering uncertainties in demand prediction and stochasticity in simulation, we also evaluate the robustness of our fixed-point solution and demonstrate the convergence of the proposed method empirically. We test our approach in a major European city, simulating scenarios with various fleet sizes, vehicle capacities, fare schemes, and hailing strategies such as in-advance requests. Scenarios are evaluated from the perspectives of passengers, AV operators, PT operators, and urban mobility system. Results show the trade off between the level of service and the operational cost, providing insight for fleet sizing to reach the optimal balance. Our simulated experiments show that encouraging ride-sharing, allowing in-advance requests, and combining fare with transit help enable service integration and encourage sustainable travel. Both the transit-oriented AV operation and the demand-supply interaction are essential components for defining and assessing the roles of the AV technology in our future transportation systems, especially those with ample and robust transit networks.},
author = {Wen, Jian and Chen, Yu Xin and Nassir, Neema and Zhao, Jinhua},
doi = {10.1016/j.trc.2018.10.018},
file = {:D$\backslash$:/Dropbox/Projects/AV{\_}TNDP/Lit Review/Simulations/Transit-oriented autonomous vehicle operation with integrated demand-supply interaction.pdf:pdf},
issn = {0968090X},
journal = {Transportation Research Part C: Emerging Technologies},
keywords = {Agent-based simulation,Autonomous vehicles,Mode choice model,Public transportation,Shared mobility-on-demand system},
number = {January},
pages = {216--234},
publisher = {Elsevier},
title = {{Transit-oriented autonomous vehicle operation with integrated demand-supply interaction}},
url = {https://doi.org/10.1016/j.trc.2018.10.018},
volume = {97},
year = {2018}
}

@article{doi:10.1287/trsc.2020.0987,
abstract = { App-based services and ridesharing in the field of mobility-on-demand (MoD) create a new mode of transport between motorized individual transport and public transportation whose long-term role in the urban mobility landscape and within public transport systems is not fully understood as of today. For the public transport industry, these new services offer new chances but also risks, making planning models and tools for integrated intermodal network planning indispensable. We develop a strategic network planning optimization model for bus lines that allows for intermodal trips with MoD as a first or last leg. Starting from an existing public transport network, we decide simultaneously on the use of existing line segments in the future fixed-route network, on areas of the city where an integrated MoD service should be offered, on how MoD interacts with the fixed-route network via transfer points, and on passenger routes fulfilling given service-level requirements. The main challenges from a modeling point of view are to capture the interplay between MoD services and the fixed public network, as well as the approximation of MoD costs taking into account that vehicle utilization is a key factor influencing these costs. We develop a path-based formulation and a branch-and-price algorithm, as well as an enhanced enumeration-based approach, to solve real-world instances to proven optimality. The solution methods are tested on instances generated with the help of real-world data from a medium-sized German city, G{\"{o}}ttingen, that currently operates around 20 bus lines. },
author = {Steiner, Konrad and Irnich, Stefan},
doi = {10.1287/trsc.2020.0987},
journal = {Transportation Science},
number = {6},
pages = {1616--1639},
title = {{Strategic Planning for Integrated Mobility-on-Demand and Urban Public Bus Networks}},
url = {https://doi.org/10.1287/trsc.2020.0987},
volume = {54},
year = {2020}
}

@techreport{Cayford2004,
address = {California},
author = {Cayford, Randall and Yim, Y B Youngbin},
file = {:C$\backslash$:/Users/Pramesh Kumar/AppData/Local/Mendeley Ltd./Mendeley Desktop/Downloaded/Cayford, Yim - 2004 - Personalized Demand-Responsive Transit Service.pdf:pdf},
institution = {California PATH Research Report},
pages = {34},
title = {{Personalized Demand-Responsive Transit Service}},
year = {2004}
}

@article{Magnati1984,
abstract = {This paper proposes methodology for improving the performance of Benders decomposition when applied to mixed integer programs. It introduces a new technique for accelerating the convergence of the algorithm and theory for distinguishing “good” model formulations of a problem that has distinct but equivalent mixed integer programming representations. The acceleration technique is based upon selecting judiciously from the alternate optima of the Benders subproblem to generate strong or pareto-optimal cuts. This methodology also applies to a much broader class of optimization algorithms that includes Dantzig-Wolfe decomposition for linear and nonlinear programs and related “cutting plane” type algorithms that arise in resource directive and price decomposition. When specialized to network location problems, this cut generation technique leads to very efficient algorithms that exploit the underlying structure of these models. In discussing the “proper” formulation of mixed integer programs, we suggest criteria for comparing various mixed integer formulations of a problem and for choosing formulations that can provide stronger cuts for Benders decomposition. From this discussion intimate connections between the previously disparate viewpoints of strong Benders cuts and tight linear programming relaxations of integer programs emerge.},
author = {Magnanti, T L and Wong, R T},
doi = {10.1287/opre.29.3.464},
journal = {Operations Research},
number = {3},
pages = {464--484},
title = {{Accelerating Benders Decomposition: Algorithmic Enhancement and Model Selection Criteria}},
url = {https://doi.org/10.1287/opre.29.3.464},
volume = {29},
year = {1981}
}

@article{Khani2020,
abstract = {This study estimates a discrete choice model from on-board survey data for park-and-ride (PNR) user's station choice, and finds that users prefer a higher proportion of their trip time spent on transit than in a car. No evidence is found that users choose the PNR location that minimizes their overall travel time, but there is strong evidence that PNR locations closer to the direction of their destination are preferred. No significant relationship is found between PNR choice, user age, income, or gender. PNR facility amenities are found to be insignificant across all models tested, and the rail and bus rapid transit service provide a significant benefit to PNR user utility when compared to the express bus service. This study contributes to existing literature by accounting for route overlap among alternatives using a path size factor in the logit model estimation. A nested logit model for different transit types is estimated, but is surpassed in predictive ability by a multinomial and mixed logit model. Ultimately, the models are found to predict transit route choice more accurately than PNR location choice, improving on the PNR choice prediction by about 11{\%}. Similarly, choice set alternatives that overlap with a user's observed route are found to generally have higher choice probabilities than non-overlapping alternatives.},
author = {Webb, Alexander and Khani, Alireza},
doi = {10.1177/0361198120908866},
journal = {Transportation Research Record},
number = {3},
pages = {150--160},
title = {{Park-and-Ride Choice Behavior in a Multimodal Network with Overlapping Routes}},
url = {https://doi.org/10.1177/0361198120908866},
volume = {2674},
year = {2020}
}

@article{maheo2017benders,
author = {Maheo, Arthur and Kilby, Philip and {Van Hentenryck}, Pascal},
journal = {Transportation Science},
number = {1},
pages = {77--88},
publisher = {INFORMS},
title = {{Benders decomposition for the design of a hub and shuttle public transit system}},
volume = {53},
year = {2017}
}

@article{Geoffrion1972,
abstract = {J. F. Benders devised a clever approach for exploiting the structure of mathematical programming problems withcomplicating variables (variables which, when temporarily fixed, render the remaining optimization problem considerably more tractable). For the class of problems specifically considered by Benders, fixing the values of the complicating variables reduces the given problem to an ordinary linear program, parameterized, of course, by the value of the complicating variables vector. The algorithm he proposed for finding the optimal value of this vector employs a cutting-plane approach for building up adequate representations of (i) the extremal value of the linear program as a function of the parameterizing vector and (ii) the set of values of the parameterizing vector for which the linear program is feasible. Linear programming duality theory was employed to derive the natural families ofcuts characterizing these representations, and the parameterized linear program itself is used to generate what are usuallydeepest cuts for building up the representations.},
author = {Geoffrion, A M},
doi = {10.1007/BF00934810},
issn = {1573-2878},
journal = {Journal of Optimization Theory and Applications},
number = {4},
pages = {237--260},
title = {{Generalized Benders decomposition}},
url = {https://doi.org/10.1007/BF00934810},
volume = {10},
year = {1972}
}

@article{gentile2005route,
author = {Gentile, Guido and Nguyen, Sang and Pallottino, Stefano},
file = {:C$\backslash$:/Users/Pramesh Kumar/AppData/Local/Mendeley Ltd./Mendeley Desktop/Downloaded/Gentile, Nguyen, Pallottino - 2005 - Route Choice on Transit Networks with Online Information at Stops.pdf:pdf},
journal = {Transportation science},
number = {3},
pages = {289--297},
publisher = {INFORMS},
title = {{Route choice on transit networks with online information at stops}},
volume = {39},
year = {2005}
}

@phdthesis{manser2017public,
author = {Manser, P},
school = {Master thesis},
title = {{Public Transport Network Design in a World of Autonomous Vehicles}},
year = {2017}
}

@article{wilson1972statewide,
author = {Wilson, W H},
journal = {Highway Research Record},
number = {1},
title = {{Statewide Intermodal Transportation Planning in the Less Urbanized State}},
volume = {401},
year = {1972}
}

@article{Kumar2021,
author = {Kumar, Pramesh and Khani, Alireza},
journal = {Networks and Spatial Economics},
title = {{Adaptive park-and-ride choice on time-dependent stochastic multimodal transportation network}},
year = {2021}
}

@article{Cominetti2001,
abstract = {We analyze a Wardrop equilibrium model for passenger assignment in general transit networks, including the effects of congestion over the passengers' choices. The model is based on the common-line paradigm, which is applied to general networks using a dynamic programming approach. Congestion is treated by means of a simplified bulk queue model described in the appendix. We provide a complete characterization of the set of equilibria in the common-line setting, including the conditions for existence and uniqueness. This characterization reveals the existence of ranges of flow in which a Braess-like paradox appears, and in which a flow increase does not affect the system performance as measured by transit times. The congested common-line model is used to state an equilibrium model for general transit networks, and to establish the existence of a network equilibrium.},
author = {Cominetti, R. and Correa, J.},
doi = {10.1287/trsc.35.3.250.10154},
file = {:D$\backslash$:/Dropbox/Homeworks/Transportation Network Analysis/Papers/Transit Assignment/FB/(2001) Cominetti and Correa Congested transit assignment.pdf:pdf},
issn = {00411655},
journal = {Transportation Science},
number = {3},
pages = {250--267},
title = {{Common-lines and passenger assignment in congested transit networks}},
volume = {35},
year = {2001}
}

@article{Basu2018,
abstract = {Among the new transportation services made possible by the introduction of automated vehicles, automated mobility-on-demand (AMoD) has attracted a lot of attention from both industry and researchers. AMoD provides a service similar to taxi or ride-sharing services, while being driverless. It is expected to attract a huge fraction of travelers currently using mass transit or private vehicles and will have a disruptive effect on urban transportation. While most studies have focused on the operational efficiency of the technology itself, our work aims to investigate its impact on urban mobility. Our contribution is two-fold. First, we present a flexible AMoD modeling and simulation framework developed within a multi-modal agent-based urban simulation platform (SimMobility). The framework allows the detailed simulation and assessment of different AMoD operations together with an activity-based framework that accounts for changes in demand, such as activity participation, trip making, mode, destination, or route choice decisions. Second, we focus our attention on the role of mass transit in a futuristic urban system where AMoD is widely available. Mass transit is already challenged by current ride-sharing services, for example, Uber and Lyft, which provide comparatively better and cheaper services. This trend will plausibly be exacerbated with the introduction of AMoD, which may indirectly act as a replacement to mass transit. Our simulation results show that mass transit is irreplaceable, despite the high efficiency of AMoD, in order to avoid congestion and maintain a sustainable urban transportation system with acceptable levels of service.},
author = {Basu, Rounaq and Araldo, Andrea and Akkinepally, Arun Prakash and {Nahmias Biran}, Bat Hen and Basak, Kalaki and Seshadri, Ravi and Deshmukh, Neeraj and Kumar, Nishant and Azevedo, Carlos Lima and Ben-Akiva, Moshe},
doi = {10.1177/0361198118758630},
file = {:D$\backslash$:/Dropbox/Projects/AV{\_}TNDP/Lit Review/Simulations/(2018).pdf:pdf},
issn = {21694052},
journal = {Transportation Research Record},
number = {8},
pages = {608--618},
title = {{Automated Mobility-on-Demand vs. Mass Transit: A Multi-Modal Activity-Driven Agent-Based Simulation Approach}},
url = {https://doi.org/10.1177/0361198118758630},
volume = {2672},
year = {2018}
}

@article{Tang2013,
abstract = {We investigate a logistics facility location problem to determine whether the existing facilities remain open or not, what the expansion size of the open facilities should be and which potential facilities should be selected. The problem is formulated as a mixed integer linear programming model (MILP) with the objective to minimize the sum of the savings from closing the existing facilities, the expansion costs, the fixed setup costs, the facility operating costs and the transportation costs. The structure of the model motivates us to solve the problem using Benders decomposition algorithm. Three groups of valid inequalities are derived to improve the lower bounds obtained by the Benders master problem. By separating the primal Benders subproblem, different types of disaggregated cuts of the primal Benders cut are constructed in each iteration. A high density Pareto cut generation method is proposed to accelerate the convergence by lifting Pareto-optimal cuts. Computational experiments show that the combination of all the valid inequalities can improve the lower bounds significantly. By alternately applying the high density Pareto cut generation method based on the best disaggregated cuts, the improved Benders decomposition algorithm is advantageous in decreasing the total number of iterations and CPU time when compared to the standard Benders algorithm and optimization solver CPLEX, especially for large-scale instances. {\textcopyright} 2012 Springer Science+Business Media, LLC.},
author = {Tang, Lixin and Jiang, Wei and Saharidis, Georgios K.D.},
doi = {10.1007/s10479-011-1050-9},
file = {:C$\backslash$:/Users/Pramesh Kumar/AppData/Local/Mendeley Ltd./Mendeley Desktop/Downloaded/Tang, Jiang, Saharidis - 2013 - An improved Benders decomposition algorithm for the logistics facility location problem with capacity ex.pdf:pdf},
isbn = {1047901110},
issn = {02545330},
journal = {Annals of Operations Research},
keywords = {Benders decomposition,Disaggregated cuts,Establishment of new facilities,Existing facility expansion,Facility location,High density Pareto cuts,Valid inequalities},
number = {1},
pages = {165--190},
title = {{An improved Benders decomposition algorithm for the logistics facility location problem with capacity expansions}},
volume = {210},
year = {2013}
}

@article{Bian2019,
abstract = {Ridesharing is an effective transportation mode to provide first-mile accessibility to public transit and a low-cost, environment-friendly, and sustainable mobility service. This paper designs a mechanism for the first-mile ridesharing service. The mechanism accounts for passengers' personalized requirements on different inconvenience attributes (e.g. the number of co-riders, extra in-vehicle travel time, and extra waiting time at the transit hub) of the service in determining the optimal vehicle-passenger matching and vehicle routing plan and customized pricing scheme. The proposed mechanism is proved to be individual rational, incentive compatible, and price non-negative. The three properties respectively indicate that passengers are willing to participate in the service, that honestly reporting personalized requirements is the optimal strategy, and that the service provider is guaranteed to receive revenue from the participants. A case study is proposed to interpret the mechanism and to demonstrate the generality of the personalized-requirement-based mechanism that can be adapted into different scenarios.},
author = {Bian, Zheyong and Liu, Xiang},
doi = {10.1016/j.trb.2018.12.009},
issn = {01912615},
journal = {Transportation Research Part B: Methodological},
keywords = {Customized pricing,First-mile,Mechanism design,Personalized service,Ridesharing},
pages = {147--171},
publisher = {Elsevier Ltd},
title = {{Mechanism design for first-mile ridesharing based on personalized requirements part I: Theoretical analysis in generalized scenarios}},
url = {https://doi.org/10.1016/j.trb.2018.12.009},
volume = {120},
year = {2019}
}

@article{BeheshtiAsl2019,
abstract = {Benders decomposition (BD) is a well-known approach that has been successfully applied to various mathematical programming problems. According to previous studies, slow convergence is the main drawback of this method. In this paper, multi-solution of the master problem has been applied to accelerate the BD algorithm and improve both lower and upper bounds by simultaneously adding multiple feasibility and optimality cuts. A novel integration of Benders cuts was used to prevent the growth of master problem. Computational experiments were applied to a series of logistics facility location problems. Our numerical experiments resulted in a 61{\%} decrease in the total number of iterations and up to 73{\%} reduction in the solution time, confirming the outstanding performance of the proposed method.},
author = {{Beheshti Asl}, N. and MirHassani, S. A.},
doi = {10.1007/s10878-018-0320-8},
file = {:D$\backslash$:/Dropbox/Papers to read or delete/Benders Decomposition/(2019 ) BeheshtiAsl-MirHassani.pdf:pdf},
issn = {15732886},
journal = {Journal of Combinatorial Optimization},
keywords = {Benders decomposition,Logistics facility location problem,Multiple cuts,Multiple master solution},
number = {3},
pages = {806--826},
publisher = {Springer US},
title = {{Accelerating benders decomposition: multiple cuts via multiple solutions}},
url = {https://doi.org/10.1007/s10878-018-0320-8},
volume = {37},
year = {2019}
}

@article{Campbell2005a,
abstract = {A companion paper (Campbell et al. 2005) introduces new hub arc location models and analyzes optimal solutions, with special attention to spatial pattens and relationships. This paper provides integer programming formulations and optimal solution algorithms for these problems. We describe two optimal solution approaches in detail and compare their performance, using standard hub location data sets. We present implementation details and show how algorithms can be fine tuned based on characteristics of the data. {\textcopyright} 2005 INFORMS.},
author = {Campbell, J. F. and Ernst, A. T. and Krishnamoorthy, M.},
doi = {10.1287/mnsc.1050.0407},
issn = {00251909},
journal = {Management Science},
keywords = {Algorithms,Hub arc location,Hub location,Integer programming formulations,Network design},
number = {10},
pages = {1556--1571},
title = {{Hub Arc location problems: Part II - Formulations and optimal algorithms}},
volume = {51},
year = {2005}
}

@unpublished{Mo2020a,
abstract = {The emerging autonomous vehicles (AV) can either supplement the public transportation (PT) system or be a competitor with it. This paper focuses on this competition in a hypothetical scenario--"if both AV and PT operators are profit-oriented," and uses an ABM to quantitatively evaluate the system performance in this competition from the perspectives of four stakeholders--AV operator, PT operator, passengers, and public authority. In our model, AV operator updates its supply by changing fleet sizes while PT by adjusting headways, and both use heuristic approaches to update supply in order to increase profits. We implement the model in the first-mile scenario in Tampines. In four regulation scenarios--two by two combinations regarding whether AV and PT are allowed to change supplies--we find that since AV can release the bus operator from low-demand routes, the competition can lead to higher profits of both, and higher system efficiency, simultaneously, rather than a one-sided loss-gain result. For PT, after supply updates, spatially the services are concentrated to short feeder routes directly to the subway station, and temporally concentrated to peak hours. For passengers, the competition reduces their travel time but increases their travel costs. Nonetheless, the generalized travel cost is still reduced when counting the value of time. For system efficiency and sustainability, bus supply adjustment can increase the bus average load and reduce total PCE, while the AV supply adjustment shows the opposite effect. For policy implications, the paper suggests that PT should be allowed to optimize its supply strategies under specific operation goal constraints, and AV operation should be regulated to relieve its externality on the system, including limiting the number of licenses, operation time, and service areas, which makes AV operate like a complementary mode to PT.},
archivePrefix = {arXiv},
arxivId = {2001.03197},
author = {Mo, Baichuan and Cao, Zhejing and Zhang, Hongmou and Shen, Yu and Zhao, Jinhua},
eprint = {2001.03197},
file = {:D$\backslash$:/Dropbox/Projects/AV{\_}TNDP/Lit Review/Simulations/Mo et al. (2020).pdf:pdf},
institution = {Massachusetts Institute of Technology},
keywords = {agent-based,autonomous vehicles,market competition,public transportation,shared mobility-on-demand system,simulation},
title = {{Dynamic Interaction between Shared Autonomous Vehicles and Public Transit: A Competitive Perspective}},
url = {http://arxiv.org/abs/2001.03197},
volume = {100084},
year = {2020}
}

@article{Masoud2017,
abstract = {Peer-to-peer ridesharing is a recently emerging travel alternative that can help accommodate the growth in urban travel demand and at the same time alleviate problems such as excessive vehicular emissions. Prior ridesharing projects suggest that the demand for ridesharing is usually shifted from transit, while its true benefits are realized when the demand shifts from single-occupancy vehicles. This paper studies the potential of shifting demand from private autos to transit by providing a general modeling framework that finds routs for private vehicle users that are a combination of peer-to-peer (P2P) ridesharing and transit. The Los Angeles Metro red line is considered for a case study, since it has recently shown declining ridership trends. For successful implementation of a ridesharing system, strategically selecting locations for individuals to get on/off rideshare vehicles is crucial, along with an appropriate pricing structure for rides. A parametric study of the application of real-time P2P ridesharing to feed the LA Metro red line using simulated demand is conducted. A mobile application with an innovative ride-matching algorithm is developed as a decision support tool that suggests transit-rideshare and rideshare routes.},
author = {Masoud, Neda and Nam, Daisik and Yu, Jiangbo and Jayakrishnan, R.},
doi = {10.3141/2650-09},
file = {:C$\backslash$:/Users/Pramesh Kumar/AppData/Local/Mendeley Ltd./Mendeley Desktop/Downloaded/Masoud et al. - 2017 - Promoting Peer-to-Peer Ridesharing Services as Transit System Feeders.pdf:pdf},
issn = {0361-1981},
journal = {Transportation Research Record: Journal of the Transportation Research Board},
pages = {74--83},
title = {{Promoting Peer-to-Peer Ridesharing Services as Transit System Feeders}},
url = {http://trrjournalonline.trb.org/doi/10.3141/2650-09},
volume = {2650},
year = {2017}
}

@article{douglas1972,
abstract = {In a market of cruising taxis price competition is impracticable, and service (measured by waiting time) cannot be differentiated by customers' willingness to pay. This article examines the principles governing the setting of efficient prices to attain maximum use of the service.},
author = {Douglas, George W},
issn = {00225258},
journal = {Journal of Transport Economics and Policy},
number = {2},
pages = {116--127},
publisher = {[London School of Economics, University of Bath, London School of Economics and University of Bath, London School of Economics and Political Science]},
title = {{Price Regulation and Optimal Service Standards: The Taxicab Industry}},
url = {http://www.jstor.org/stable/20052271},
volume = {6},
year = {1972}
}

@article{Desaulniers2007,
author = {Desaulniers, Guy and Hickman, Mark D.},
doi = {10.1016/S0927-0507(06)14002-5},
file = {:D$\backslash$:/Dropbox/Papers to read or delete/Transit network design/Books and reviews/Book Chapter Desaulniers and Hickman Public Transit.pdf:pdf},
issn = {09270507},
journal = {Handbooks in Operations Research and Management Science},
number = {C},
pages = {69--127},
title = {{Chapter 2 Public Transit}},
volume = {14},
year = {2007}
}

@article{Quadrifoglio2008,
abstract = {In this paper we study the impact on productivity of specific operating practices currently used by demand responsive transit (DRT) providers. We investigate the effect of using a zoning vs. a no-zoning strategy and time-window settings on performance measures such as total trip miles, deadhead miles and fleet size. It is difficult to establish closed-form expressions to assess the impact on the performance measures of a specific zoning practice or time-window setting for a real transportation network. Thus, we conduct this study through a simulation model of the operations of DRT providers on a network based on data for DRT service in Los Angeles County. However, the methodology is quite general and applicable to any other service area. Our results suggest the existence of linear relationships between operating practices and performance measures. In particular we observe that for each minute increase in time-window size the service saves approximately 2 vehicles and 260 miles driven and that a no-zoning strategy is able to satisfy the same demand by employing 60 less vehicles and driving 10,000 less total miles with respect to the current zoning strategy. {\textcopyright} 2008 Elsevier Ltd. All rights reserved.},
author = {Quadrifoglio, Luca and Dessouky, Maged M. and Ord{\'{o}}{\~{n}}ez, Fernando},
doi = {10.1016/j.tra.2008.01.018},
file = {:C$\backslash$:/Users/Pramesh Kumar/AppData/Local/Mendeley Ltd./Mendeley Desktop/Downloaded/Quadrifoglio, Dessouky, Ord{\'{o}}{\~{n}}ez - 2008 - A simulation study of demand responsive transit system design.pdf:pdf},
issn = {09658564},
journal = {Transportation Research Part A: Policy and Practice},
keywords = {Dial-a-ride,Performance measures,Simulation,Time-windows,Zoning},
number = {4},
pages = {718--737},
title = {{A simulation study of demand responsive transit system design}},
volume = {42},
year = {2008}
}

@article{Szeto2014b,
abstract = {This paper proposes a bi-level transit network design problem where the transit routes and frequency settings are determined simultaneously. The upper-level problem is formulated as a mixed integer non-linear program with the objective of minimizing the number of passenger transfers, and the lower-level problem is the transit assignment problem with capacity constraints. A hybrid artificial bee colony (ABC) algorithm is developed to solve the bi-level problem. This algorithm relies on the ABC algorithm to design route structures and a proposed descent direction search method to determine an optimal frequency setting for a given route structure. The descent direction search method is developed by analyzing the optimality conditions of the lower-level problem and using the relationship between the lower- and upper-level objective functions. The step size for updating the frequency setting is determined by solving a linear integer program. To efficiently repair route structures, a node insertion and deletion strategy is proposed based on the average passenger demand for the direct services concerned. To increase the computation speed, a lower bound of the objective value for each route design solution is derived and used in the fitness evaluation of the proposed algorithm. Various experiments are set up to demonstrate the performance of our proposed algorithm and the properties of the problem. {\textcopyright} 2014 Elsevier Ltd.},
author = {Szeto, W. Y. and Jiang, Y.},
doi = {10.1016/j.trb.2014.05.008},
file = {:C$\backslash$:/Users/Pramesh Kumar/AppData/Local/Mendeley Ltd./Mendeley Desktop/Downloaded/Szeto, Jiang - 2014 - Transit route and frequency design Bi-level modeling and hybrid artificial bee colony algorithm approach.pdf:pdf},
issn = {01912615},
journal = {Transportation Research Part B: Methodological},
keywords = {Artificial bee colony algorithm,Bi-level programming,Bus network design,Matheuristics,Mixed integer program,Transit route and frequency setting problem},
pages = {235--263},
publisher = {Elsevier Ltd},
title = {{Transit route and frequency design: Bi-level modeling and hybrid artificial bee colony algorithm approach}},
url = {http://dx.doi.org/10.1016/j.trb.2014.05.008},
volume = {67},
year = {2014}
}

@article{CEDER1986331,
abstract = {This paper describes the bus network design problem, summarizes the different approaches that have been proposed for its solution and proposes a new approach incorporating some of the positive aspects of prior work. The proposed approach is intended to be easier to implement and less demanding in terms of both data requirements and analytical sophistication than previous methods. An algorithm is presented that can be used to design new bus routes taking account of both passenger and operator interests; however, this algorithm focuses on only a single component of the overall bus operations planning process described in this paper.},
author = {Ceder, Avishai and Wilson, Nigel H M},
doi = {https://doi.org/10.1016/0191-2615(86)90047-0},
file = {:C$\backslash$:/Users/Pramesh Kumar/AppData/Local/Mendeley Ltd./Mendeley Desktop/Downloaded/Ceder, Wilson - 1986 - Bus network design.pdf:pdf},
issn = {0191-2615},
journal = {Transportation Research Part B: Methodological},
number = {4},
pages = {331--344},
title = {{Bus network design}},
url = {http://www.sciencedirect.com/science/article/pii/0191261586900470},
volume = {20},
year = {1986}
}

@article{LIU2019648,
abstract = {Mobility-on-Demand (MoD) systems are generally designed and analyzed for a fixed and exogenous demand, but such frameworks fail to answer questions about the impact of these services on the urban transportation system, such as the effect of induced demand and the implications for transit ridership. In this study, we propose a unified framework to design, optimize and analyze MoD operations within a multimodal transportation system where the demand for a travel mode is a function of its level of service. An application of Bayesian optimization (BO) to derive the optimal supply-side MoD parameters (e.g., fleet size and fare) is also illustrated. The proposed framework is calibrated using the taxi demand data in Manhattan, New York. Travel demand is served by public transit and MoD services of varying passenger capacities (1, 4 and 10), and passengers are predicted to choose travel modes according to a mode choice model. This choice model is estimated using stated preference data collected in New York City. The convergence of the multimodal supply-demand system and the superiority of the BO-based optimization method over earlier approaches are established through numerical experiments. We finally consider a policy intervention where the government imposes a tax on the ride-hailing service and illustrate how the proposed framework can quantify the pros and cons of such policies for different stakeholders.},
author = {Liu, Yang and Bansal, Prateek and Daziano, Ricardo and Samaranayake, Samitha},
doi = {https://doi.org/10.1016/j.trc.2018.09.022},
issn = {0968-090X},
journal = {Transportation Research Part C: Emerging Technologies},
keywords = {Bayesian optimization,Mobility-on-demand,Mode choice},
pages = {648--665},
title = {{A framework to integrate mode choice in the design of mobility-on-demand systems}},
url = {http://www.sciencedirect.com/science/article/pii/S0968090X18313718},
volume = {105},
year = {2019}
}

@article{Levin2015,
abstract = {Autonomous vehicles (AVs) may significantly change traveler behavior and network congestion. Empty repositioning trips allow travelers to avoid parking fees or share the vehicle with other household members. Computer precision and reaction times may also increase road and intersection capacities. AVs are currently being test driven on public roads and may be publicly available within the next two decades; they therefore may be within the span of 20- to 30-year planning analyses. Despite this time scale, AV behavior has yet to be incorporated into planning models. This paper presents a multiclass, four-step model that includes AV repositioning to avoid parking fees (although incurring additional fuel costs) and increases in link capacity as a function of the proportion of AVs on the link. Demand is divided into classes by value of time and AV ownership. Mode choice-parking, repositioning, or transit-is determined through a nested logit model. Traffic assignment is based on a generalized cost function of time, fuel, and tolls. The results on a city network show that transit ridership decreases and the number of personal vehicle trips sharply increases as a result of repositioning. However, increases in link capacity offset the additional congestion. Although link volume increases significantly, only modest decreases in average link speeds are observed.},
author = {Levin, Michael W. and Boyles, Stephen D.},
doi = {10.3141/2493-04},
file = {:D$\backslash$:/Dropbox/Projects/AV{\_}TNDP/Lit Review/Simulations/(2015).pdf:pdf},
issn = {03611981},
journal = {Transportation Research Record},
pages = {29--38},
title = {{Effects of autonomous vehicle ownership on trip, mode, and route choice}},
volume = {2493},
year = {2015}
}

@article{Shen2012,
abstract = {This paper evaluates the effects of including transfers between service$\backslash$nzones on overall service performance in a paratransit system. Transfers$\backslash$nwere included to improve the operational efficiency of a system when$\backslash$nmaintenance of a desirable zoning structure was obligatory. This$\backslash$nproposed innovative service design was compared with more traditional$\backslash$ncases of no transfer zoning and no zoning. A set of instances was$\backslash$ngenerated from demand data obtained from the Metropolitan Transit$\backslash$nAuthority of Harris County, Texas, and evaluated through simulation$\backslash$nanalyses. The results demonstrated that under a zoning structure, this$\backslash$ntransfer design (in comparison with a nontransfer design) provided$\backslash$nnoticeable improvements in efficiency measures and better passenger$\backslash$ntrips per vehicle revenue hour while maintaining a minimum customer$\backslash$nservice standard; however, the overall performance of the no-zoning$\backslash$nstrategy used by the Houston, Texas, Metropolitan Transit Authority of$\backslash$nHarris County performed the best, on average.},
author = {Shen, Chung-Wei and Quadrifoglio, Luca},
doi = {10.3141/2277-10},
file = {:C$\backslash$:/Users/Pramesh Kumar/AppData/Local/Mendeley Ltd./Mendeley Desktop/Downloaded/Shen, Quadrifoglio - 2012 - Evaluation of Zoning Design with Transfers for Paratransit Services.pdf:pdf},
issn = {0361-1981},
journal = {Transportation Research Record: Journal of the Transportation Research Board},
number = {1},
pages = {82--89},
title = {{Evaluation of Zoning Design with Transfers for Paratransit Services}},
volume = {2277},
year = {2012}
}

@article{Stiglic2018,
abstract = {Seamless integration of ride-sharing and public transit may offer fast, reliable, and affordable transfer to and from transit stations in suburban areas thereby enhancing mobility of residents. We investigate the potential benefits of such a system, as well as the ride-matching technology required to support it, by means of an extensive computational study. Our study shows that the integration of a ride-sharing system and a public transit system can significantly enhance mobility and increase the use of public transport.},
author = {Stiglic, Mitja and Agatz, Niels and Savelsbergh, Martin and Gradisar, Mirko},
doi = {10.1016/j.cor.2017.08.016},
file = {:C$\backslash$:/Users/Pramesh Kumar/AppData/Local/Mendeley Ltd./Mendeley Desktop/Downloaded/Stiglic et al. - 2018 - Enhancing urban mobility Integrating ride-sharing and public transit.pdf:pdf},
issn = {03050548},
journal = {Computers and Operations Research},
keywords = {Mobility,Public transit,Ride-sharing,Sustainable transportation},
pages = {12--21},
publisher = {Elsevier Ltd},
title = {{Enhancing urban mobility: Integrating ride-sharing and public transit}},
url = {http://dx.doi.org/10.1016/j.cor.2017.08.016},
volume = {90},
year = {2018}
}

@article{Kurauchi2003,
abstract = {This paper proposes the use of absorbing Markov chains to solve the capacity constrained transit network loading problem taking common lines into account. The approach handles congested transit networks, where some passengers will not be able to board because of the absence of sufficient space. The model also handles the common lines problem, where choice of route depends on frequency of arrivals. The mathematical formulation of the problem is presented together with a numerical example. {\textcopyright} 2003 Kluwer Academic Publishers.},
author = {Kurauchi, Fumitaka and Bell, Michael G.H. and Schm{\"{o}}cker, Jan Dirk},
doi = {10.1023/B:JMMA.0000020426.22501.c1},
file = {:D$\backslash$:/Dropbox/Homeworks/Transportation Network Analysis/Papers/Transit Assignment/FB/(2003) Kurauchi{\_}Bell{\_}Schmocker Prob of failure to board.pdf:pdf},
issn = {15701166},
journal = {Journal of Mathematical Modelling and Algorithms},
keywords = {absorbing Markov chain,capacity restrained,common lines problem,transit assignment},
number = {4},
pages = {309--327},
title = {{Capacity Constrained Transit Assignment with Common Lines}},
volume = {2},
year = {2003}
}

@article{KUMAR2021102891,
abstract = {Due to limited transit network coverage and infrequent service, suburban commuters often face the transit first mile/last mile (FMLM) problem. To deal with this, they either drive to a park-and-ride location to take transit, use carpooling, or drive directly to their destination to avoid inconvenience. Ridesharing, an emerging mode of transportation, can solve the transit first mile/last mile problem. In this setup, a driver can drive a ride-seeker to a transit station, from where the rider can take transit to her respective destination. The problem requires solving a ridesharing matching problem with the routing of riders in a multimodal transportation network. We develop a transit-based ridesharing matching algorithm to solve this problem. The method leverages the schedule-based transit shortest path to generate feasible matches and then solves a matching optimization program to find an optimal match between riders and drivers. The proposed method not only assigns an optimal driver to the rider but also assigns an optimal transit stop and a transit vehicle trip departing from that stop for the rest of the rider's itinerary. We also introduce the application of space-time prism (STP) (the geographical area which can be reached by a traveler given the time constraints) in the context of ridesharing to reduce the computational time by reducing the network search. An algorithm to solve this problem dynamically using a rolling horizon approach is also presented. We use simulated data obtained from the activity-based travel demand model of Twin Cities, MN to show that the transit-based ridesharing can solve the FMLM problem and save a significant number of vehicle-hours spent in the system.},
author = {Kumar, Pramesh and Khani, Alireza},
doi = {https://doi.org/10.1016/j.trc.2020.102891},
issn = {0968-090X},
journal = {Transportation Research Part C: Emerging Technologies},
keywords = {Matching optimization,Ridesharing,Rolling horizon,Schedule-based transit shortest path,Space-time prism (STP),Transit,Transit first mile last mile},
pages = {102891},
title = {{An algorithm for integrating peer-to-peer ridesharing and schedule-based transit system for first mile/last mile access}},
url = {https://www.sciencedirect.com/science/article/pii/S0968090X20307919},
volume = {122},
year = {2021}
}

@article{Lee2017,
abstract = {The need for viable public transit systems has been well documented and so has the role that so-called flexible transport systems can play. Flexible transport services offer great potential for increases in mobility and convenience and decreases in travel times and operating costs. One such service is the demand responsive connector, which transports commuters from residential addresses to transit hubs via a shuttle service, from where they continue their journey via a traditional timetabled service. To access this service, commuter and service provider agree on an earliest time the commuter must be available for collection and a latest time the commuter will arrive at a transit station. We investigate various options for implementing a demand responsive connector and the associated vehicle-scheduling problems. Previous work has only considered regional systems, where vehicles drop passengers off at a predetermined station; one of our contributions is to relax that restriction and investigate the benefits of allowing alternative transit stations. An extensive computational study shows that the more flexible system offers cost advantages over regional systems, especially when transit services are frequent, or transit hubs are close together, with little impact on passenger convenience.},
author = {Lee, Alan and Savelsbergh, Martin},
doi = {10.1007/s13676-014-0060-6},
file = {:C$\backslash$:/Users/Pramesh Kumar/AppData/Local/Mendeley Ltd./Mendeley Desktop/Downloaded/Lee, Savelsbergh - 2017 - An extended demand responsive connector.pdf:pdf},
issn = {21924384},
journal = {EURO Journal on Transportation and Logistics},
keywords = {Demand responsive connector,Flexible transportation services,Heuristics,Vehicle routing and scheduling},
number = {1},
pages = {25--50},
publisher = {Springer Berlin Heidelberg},
title = {{An extended demand responsive connector}},
url = {http://dx.doi.org/10.1007/s13676-014-0060-6},
volume = {6},
year = {2017}
}

@article{Mendes2017,
abstract = {Policy makers predict that autonomous vehicles will have significant market penetration in the next decade or so. In several simulation studies shared autonomous vehicle fleets have been shown to be effective public transit alternatives. This study compared the effectiveness of a shared autonomous vehicle fleet with an upcoming transit project proposed in New York City, the Brooklyn–Queens Connector light rail project. The study developed an event-based simulation model to compare the performance of the shared autonomous vehicle system with the light rail system under the same demand patterns, route alignment, and operating speeds. The simulation experiments revealed that a shared autonomous vehicle fleet of 500 vehicles of 12-person capacity (consistent with the EZ10 vehicle) would be needed to match the 39-vehicle light rail system if operated as a fixed-route system. However, as a demand-responsive system, a fleet of only 150 vehicles would lead to the same total travel time (22 min) as the 39-vehicle fleet light rail system. Furthermore, a fleet of 450 12-person vehicles in a demand-responsive operation would have the same average wait times while reducing total travel times by 36{\%}. The implications for system resiliency, idle vehicle allocation, and vehicle modularity are discussed.},
author = {Mendes, Lucas Mestres and Benn{\`{a}}ssar, Manel Rivera and Chow, Joseph Y.J.},
doi = {10.3141/2650-17},
file = {:D$\backslash$:/Dropbox/Projects/AV{\_}TNDP/Lit Review/Simulations/(2017).pdf:pdf},
issn = {21694052},
journal = {Transportation Research Record},
keywords = {Public transportation},
number = {1},
pages = {142--151},
title = {{Comparison of light rail streetcar against shared autonomous vehicle fleet for Brooklyn–Queens connector in New York City}},
url = {https://doi.org/10.3141/2650-17},
volume = {2650},
year = {2017}
}

@book{conforti2014integer,
author = {Conforti, M and Cornuejols, G and Zambelli, G},
isbn = {9783319110073},
publisher = {Springer International Publishing},
series = {Graduate Texts in Mathematics},
title = {{Integer Programming}},
url = {https://books.google.com/books?id=KUHUoAEACAAJ},
year = {2014}
}

@misc{Motavalli2020,
author = {Motavalli, Jim},
booktitle = {The New York Times},
title = {{Who Will Own the Cars That Drive Themselves?}},
url = {https://www.nytimes.com/2020/05/29/business/ownership-autonomous-cars-coronavirus.html},
year = {2020}
}

@article{OECD2015,
abstract = {This report examines the changes that might result from the large-scale uptake of a shared and self-driving fleet of vehicles in a mid-sized European city. The study explores two different self-driving vehicle concepts, for which we have coined the terms “TaxiBot” and “AutoVot”. TaxiBots are self-driving cars that can be shared simultaneously by several passengers. AutoVots pick-up and drop-off single passengers sequentially. We had two premises for this study: First, the urban mobility system upgrade with a fleet of TaxiBots and AutoVots should deliver the same trips as today in terms of origin, destination and timing. Second, it should also replace all car and bus trips. The report looks at impacts on car fleet size, volume of travel and parking requirements over two different time scales: a 24-hour average and for peak hours only.},
author = {OECD},
doi = {10.1007/s10273-016-2048-3},
file = {:D$\backslash$:/Down/15cpb{\_}self-drivingcars.pdf:pdf},
isbn = {9783662458532},
issn = {1613978X},
journal = {Corporate Partnership Board Report},
pages = {1--36},
title = {{Urban Mobility System Upgrade: How shared self-driving cars could change city traffic}},
url = {http://www.internationaltransportforum.org/Pub/pdf/15CPB{\_}Self-drivingcars.pdf},
year = {2015}
}

@article{Campbell2005,
abstract = {Hub networks play an important role in many transportation and telecommunications systems. This paper introduces a new model called the hub arc location model. Rather than locate discrete hub facilities, this model locates hub arcs, which have reduced unit flow costs. Four special cases of the general hub arc location model are examined in detail. We provide motivation for the new models, and present examples and optimal solutions, using data for U.S. air passenger traffic. Results are used to compare optimal costs, hub locations, and hub arc locations with corresponding hub median optimal solutions. The results reveal interesting spatial patterns and help identify promising cities and regions for hubs. A companion paper (Campbell et al. 2005) presents integer programming formulations and solution algorithms for the new hub arc problems. It also provides details and computation times for these solution algorithms. {\textcopyright} 2005 INFORMS.},
author = {Campbell, J. F. and Ernst, A. T. and Krishnamoorthy, M.},
doi = {10.1287/mnsc.1050.0406},
issn = {00251909},
journal = {Management Science},
keywords = {Hub arc location,Hub location,Network design},
number = {10},
pages = {1540--1555},
title = {{Hub arc location problems: Part I - Introduction and results}},
volume = {51},
year = {2005}
}

@article{Fagnant2016c,
abstract = {The emergence of automated vehicles holds great promise for the future of transportation. Although commercial sales of fully self-driving vehicles will not commence for several more years, once these sales are possible a new transportation mode for personal travel promises to arrive. This new mode is the shared autonomous (or fully automated) vehicle (SAV), combining features of short-term, on-demand rentals with self-driving capabilities: in essence, a driverless taxi. This investigation examined the potential implications of the SAV at a low level of market penetration (1.3{\%} of regional trips) by simulating a feet of SAVs serving travelers in the 12-mi by 24-mi regional core of Austin, Texas. The simulation used a sample of trips from the region's planning model to generate demand across traffic analysis zones and a 32,272-link network. Trips called on the vehicles in 5-min departure time windows, with link-level travel times varying by hour of day based on MATSIM's dynamic traffic assignment simulation software. Results showed that each SAV could replace about nine conventional vehicles within the 24-mi by 12-mi area while still maintaining a reasonable level of service (as proxied by user wait times, which averaged just 1 min). Additionally, approximately 8{\%} more vehicle miles traveled (VMT) may be generated because of SAV's ability to journey unoccupied to the next traveler or relocate to a more favorable position in anticipation of its next period demand.},
author = {Fagnant, Daniel J and Kockelman, Kara M and Bansal, Prateek},
doi = {10.3141/2536-12},
journal = {Transportation Research Record},
number = {1},
pages = {98--106},
title = {{Operations of Shared Autonomous Vehicle Fleet for Austin, Texas, Market}},
url = {https://doi.org/10.3141/2536-12},
volume = {2563},
year = {2016}
}

@article{Chen2016a,
abstract = {The market potential of a fleet of shared autonomous electric vehicles (SAEVs) is explored by using a multinomial logit mode choice model in an agent-based framework and different fare settings. The mode share of SAEVs in the simulated midsize city (modeled roughly after Austin, Texas) is predicted to lie between 14{\%} and 39{\%} when the SAEVs compete with privately owned, manually driven vehicles and city bus service. The underlying assumptions are that SAEVs are priced between {\$}0.75/mi and {\$}1.00/mi, which delivers significant net revenues to the fleet owner-operator under all modeled scenarios; that they have an 80-mi range and that Level 2 charging infrastructure is available; and that automation costs are up to {\$}25,000 per vehicle. Various dynamic pricing schemes for SAEV fares indicate that specific fleet metrics can be improved with targeted strategies. For example, pricing strategies that attempt to balance available SAEV supply with anticipated trip demand can decrease average wait times by 19{\%} to 23{\%}. However, trade-offs exist within this price setting: fare structures that favor higher revenue-to-cost ratios - by targeting travelers with a high value of travel time (VOTT) - reduce SAEV mode shares, while those that favor larger mode shares - by appealing to a wider VOTT range - produce lower payback.},
author = {Chen, T. Donna and Kockelman, Kara M.},
doi = {10.3141/2572-05},
file = {:D$\backslash$:/Down/2572-05.pdf:pdf},
issn = {03611981},
journal = {Transportation Research Record},
number = {2572},
pages = {37--46},
title = {{Management of a shared autonomous electric vehicle fleet: Implications of pricing schemes}},
volume = {2572},
year = {2016}
}

	\begin{appendices}
		\section{Proofs of various propositions}\label{app:1}	
		\noindent \textbf{Proof of Proposition \ref{prop:1}}		

			(Although the proof can be found in \citet{Spiess1989} or \citet{gentile2005route}, we repeat it here because some of the details presented here will be used in proving the next proposition.) The probability of choosing line $i \in FS(n)$ is equal to the probability of wait time for line $i \in FS(n)$ to be less than or equal to wait time of other lines $j \ne i$, i.e., 
			\begin{equation}\label{eq:5}
				P_i = Prob (w_i \le \text{min}_{j \ne i} w_j) =  \int_{0}^\infty \mathfrak{g}_i(w) \Pi_{j \ne i}  Prob(w_j \ge w) dw = \int_{0}^{\infty} \gamma_i(w)dw
			\end{equation}

			where, $\gamma_i(w) = g_i(w) \Pi_{j \ne i}  Prob(w_j \ge w) = f_ie^{-f_iw} \Pi_{j \ne i} e^{-f_jw} = f_i e^{-(\sum_{j}f_j)w}$. The value of $\gamma_i(w)$ can be interpreted as the probability density function of the wait time at the stop $n$ conditional to boarding line $i$. 	Using \eqref{eq:5}, the probability of choosing line $i \in FS(n)$  can be evaluated as:
			
			\begin{equation}\label{eq:8}
				P_i =  \int_{0}^\infty f_i e^{-(\sum_{j}f_j)w} dw = \frac{f_i}{\mathfrak{F}}
			\end{equation}
			
			\noindent The expected wait time conditional to boarding line $i$ is:
			
			\begin{equation}
				EW_i = \int_{0}^{\infty} w\gamma_i(w)dw = \int_{0}^{\infty} w f_i e^{-(\sum_{j}f_j)w} dw = \frac{f_i}{\mathfrak{F}^2}
			\end{equation} 
			
			Summing over all the lines $FS(n)$ gives us the expected wait time at the stop, i.e., 
			
			\begin{equation}
				EW_n = \sum_{i \in FS(n)} \int_{0}^{\infty} w\gamma_i(w)dw =  \int_{0}^{\infty} w \sum_{i}\gamma_i(w)dw 
			\end{equation} 
			
			where, $\sum_{i}\gamma_i(w)$ is the probability density function of the wait time at stop $n$. 
			
			\begin{equation}
				\sum_{i \in FS(n)}\gamma_i(w) = \sum_{i \in FS(n)} f_ie^{-(\sum_{j}f_j)w} = \mathfrak{F}e^{-\mathfrak{F}w}, w \ge 0
			\end{equation}
			
			Therefore, the expected wait time at stop $n$ if given by $EW_n = \frac{1}{\mathfrak{F}}$\\
			
	\noindent \textbf{Proof of Proposition \ref{prop:2}}		

		The probability of taking transit is given by:
		\begin{eqnarray}
			P_{transit} = \int_{0}^{\infty} (\sum_{i}\gamma_i(w)) Prob(w\le w_{MoD}) dw\\
			P_{transit} = \int_{0}^{\infty}  \mathfrak{F}e^{-\mathfrak{F}w} \times e^{-(f_{MoD})w} dw =	\frac{\mathfrak{F}}{\mathbb{F}}\\
		\end{eqnarray} 
		Similarly, the probability of taking MoD is given by $P_{MoD} = \frac{\mathfrak{F}}{\mathbb{F}}$.	
		The expected wait time of the passenger departing from an access node $n$ is given by:
		\begin{equation}
			EW_n = \int_{0}^{\infty} w \left(\mathfrak{F} e^{- (f_{MoD} + \mathfrak{F})} + f_{MoD} e^{- (f_{MoD} + \mathfrak{F})} \right) dw = \frac{1}{\mathbb{F}}
		\end{equation}\\

	\noindent \textbf{Proof of Proposition \ref{prop:4}}			
	To prove this, we need to show that (a) $proj_{v, W} \mathcal{X}^{SP} \subseteq \mathcal{X}^{MA}$ and (b) $\mathcal{X}^{MA} \subseteq proj_{v, W} \mathcal{X}^{SP}$. Let us first start by proving (b). Let $(v, W) \in \mathcal{X}^{MA}$. For all $a \in FS(i) : a \in A_T, \forall i \in N^w, \forall k \in D$, we  have $v_{ak} \le f_a W_{ik}$. Let $y_{l(a)f} = 1$ if the frequency of the line associated to arc $a$ is $f \in \Theta$ and 0, otherwise. Let $ t_{faik} =  \hat{y}_{l(a)f} W_{ik}$, then $f_a W_{ik} = \sum_{f\in \Theta} f \hat{y}_{l(a)f} W_{ik} = \sum_{f\in \Theta} f t_{faik}$, which is same as \eqref{eq:33}. Also, $ t_{faik} =  \hat{y}_{l(a)f} W_{ik}$ can be expressed as \eqref{eq:40}- \eqref{eq:34} and \eqref{eq:40a}. Using a similar argument, we can show that for all $a \in FS(i) : a \in A_R, \forall i \in N^w, \forall k \in D$, the inequality $v_{ak} \le f_a W_{ik}$ can be expressed as \eqref{eq:36}-\eqref{eq:37} and \eqref{eq:30}. This shows that  $\mathcal{X}^{MA} \subseteq proj_{v, W} \mathcal{X}^{SP}$. To prove part (a), let $(v, W, \omega, t) \in \mathcal{X}^{SP}$, then using Fourier-Motzkin elimination, we have $(v, W) \in \mathcal{X}^{MA}$ (\citet[Chapter~3]{conforti2014integer}).\\
	
	\noindent \textbf{Proof of Proposition \ref{prop:5}}			
	The set $\mathcal{X}^{SP}$ can be empty in two cases i.e., when there is no flow balance ($\sum_{k \in D} \sum_{i \in N} g_{ik} \ne 0$) or there does not exist a directed path from any node $i \in N$ to any destination $k$. However, it is not possible to have any of these cases because from the definition of $g_{ik}$, we have $\sum_{k \in D} \sum_{i \in N} g_{ik} = 0$ and since there is always at least 0.01 vehicle assigned to all the zones and the road network is connected, there always exists a path from any node $i \in N$ to any destination $k \in D$. Therefore, $\mathcal{X}^{SP} \ne \phi$. \\
	\noindent \textbf{Proof of Proposition \ref{prop:6}}
	 Due to limited fleet available, one can allocate $n \in \Omega$ vehicles in at most $\lfloor \frac{\bar{F}}{n} \rfloor$ zones. \\
	\noindent \textbf{Proof of Proposition \ref{prop:7}}	
	The proposition follows from the definition of minimal cover that we cannot provide the number of buses required in the minimal cover. 
		
	\end{appendices}
	
\end{document}